\documentclass[10pt]{amsart}
\usepackage{amsfonts,amssymb,amscd,amsmath,enumerate,verbatim,calc}
\usepackage{color,soul}
\usepackage{mathtools}
\usepackage{graphicx}
\usepackage{booktabs}
\usepackage{tabularx}
\usepackage{array}
\usepackage{tikz}
\usepackage{url}
\usepackage{enumitem}
\usetikzlibrary{arrows.meta,positioning}
\everymath{\displaystyle}
\usepackage{fancyhdr}
\usepackage{tikz}
\usepackage{url}
\usepackage{pgfplots}       
\pgfplotsset{compat=1.17}   
\usepackage{tikz}           
\usepackage{pgfplotstable}  

\usetikzlibrary{shapes.geometric, arrows}
\tikzstyle{arrow} = [thick,->,>=stealth]
\tikzstyle{process} = [rectangle, minimum width=4cm, minimum height=2cm, text centered, text width=2.5cm, draw=green, fill=green!10]
\theoremstyle{plain}

\newtheorem{theorem}{Theorem}[section]
\newtheorem{corollary}[theorem]{Corollary}

\newtheorem{proposition}[theorem]{Proposition}

\theoremstyle{definition}
\newtheorem{definition}[theorem]{Definition}

\newtheorem{remark}[theorem]{Remark}
\newtheorem{example}[theorem]{Example}

\begin{document}
\title{Generalized Topological Structures Induced by $H^{*}$-Open Sets and Their Separation Properties}
\author{Neeraj kumar Tomar*}
\email{neer8393@gmail.com}
\address{Department of Applied Mathematics, Gautam Buddha University, Greater Noida, Uttar Pradesh 201312, India}
\thanks{*Corresponding author, Email: neer8393@gmail.com}

\author{Komal Priya}
 \email{komalpriya004@gmail.com}
\address{Department of Applied Mathematics, Gautam Buddha University, Greater Noida, Uttar Pradesh 201312, India}

\author{Amit Ujlayan}
 \email{iitk.amit@gmail.com}
\address{Department of Applied Mathematics, Gautam Buddha University, Greater Noida, Uttar Pradesh 201312, India}

\begin{abstract}
In this paper, we investigate the family of $H^{*}$-open sets from the perspective of generalized topology and examine the structural and separation properties arising from it. For a topological space
$(\mathbb{X},\tau)$, the family of all $H^{*}$-open subsets of
$\mathbb{X}$ is considered as a natural candidate for an
$H^{*}$-generated generalized topology. We establish conditions under
which this family forms a strong generalized topology and identify
additional assumptions that allow the resulting structure to be
treated as an ordinary topology. This distinction provides a
systematic framework for studying $H^{*}$-open sets without imposing
finite-intersection properties in advance.

Within this framework, we investigate the corresponding
$H^{*}$-interior and $H^{*}$-closure operators and analyze separation
properties determined by the $H^{*}$-open structure. When the
$H^{*}$-generated family forms a topology, the resulting associated
space is used to relate generalized $H^{*}$-separation properties to
their classical topological counterparts. We further examine
compactness-type properties, subspace behaviour, and preservation
under suitable mappings. Examples are included to illustrate how the
structure generated by $H^{*}$-open sets may exhibit separation
behaviour different from that of the original topology. The results
provide a unified setting for connecting $H^{*}$-open-set theory with
generalized topological structures and offer a basis for further
investigation of separation and covering properties in this framework.
\end{abstract}

\keywords{$H^{*}$-open set, $H^{*}$-closed set, generalized topology,
strong generalized topology, $H^{*}$-closure, $H^{*}$-interior,
separation axiom, associated topology}
\subjclass{54A05, 54C08, 54C10, 54D15.} 

\maketitle

\section*{Introduction}
\label{sec:introduction}

Generalized open and closed sets have played a significant role in
the development of general topology by providing alternative
frameworks for extending classical notions of continuity,
compactness, separation, and related topological properties. One of
the earliest contributions in this direction was the introduction of
semi-open sets by Levine \cite{levine1963semi}. Subsequently,
Njastad \cite{njȧstad1965some} introduced $\alpha$-open sets, while
Levine \cite{levine1970gen} initiated the systematic study of
generalized closed sets. These ideas stimulated the introduction of
many generalized classes of subsets and demonstrated that useful
topological information can often be obtained by replacing ordinary
open or closed sets with suitably defined generalized ones.

The influence of generalized openness is particularly visible in the
study of mappings and separation properties. Various generalized
forms of continuity and related mappings have been investigated in
\cite{arya1974strongly,jankovic1985note,decompositiondontchev1996,Noiri1994,
hatir1996decomposition}, whereas generalized
forms of regularity and normality have been considered in
\cite{singal1969almost,quasidontchev,Noiri1998,sharma2014xi,kumar2017xi,kumar2018softly}. These developments
illustrate a general principle in generalized topology: modifying
the underlying class of open sets may lead to separation and mapping
properties that differ substantially from those determined by the
original topology.

Among such generalized constructions, the theory of $h$-closed sets
provides a useful foundation for further extensions. Rodrigo, Jansi
and Theodore \cite{rodrigo2012international} studied $h$-closed sets
and related notions in topological spaces. The corresponding
$h$-closure operation subsequently provides a natural setting for
defining additional generalized closed sets. In particular, for a
topological space $(\mathbb{X},\tau)$, a subset
$\mathbb{A}\subseteq\mathbb{X}$ is called $H^{*}$-closed if
\[
h\text{-}\operatorname{Cl}(\mathbb{A})\subseteq\mathbb{U}
\]
whenever $\mathbb{A}\subseteq\mathbb{U}$ and $\mathbb{U}$ is
$hCg$-open in $\mathbb{X}$. The complement of an $H^{*}$-closed
set is called $H^{*}$-open. We write
\[
H^{*}C(\mathbb{X})
\quad\text{and}\quad
H^{*}O(\mathbb{X})
\]
for the families of all $H^{*}$-closed and $H^{*}$-open subsets of
$\mathbb{X}$, respectively.

The $H^{*}$-structure has already led to several developments in
generalized separation theory. Sharma, Sharma and Tomar
\cite{sharma2015h} investigated $H^{*}$-normal spaces, thereby
extending a classical separation concept to the setting determined
by $H^{*}$-open and $H^{*}$-closed sets. More recently, Tomar,
Sharma, Ujlayan and Zulfeqarr \cite{tomarsome2024} studied
separation axioms associated with generalized open sets and
introduced, among others, the conditions
\[
H^{*}\text{-}T_{1/2},\qquad
H^{*}\text{-}T_b,\qquad
H^{*}\text{-}T_d.
\]
They investigated several properties and relationships involving
these separation conditions. These results indicate that
$H^{*}$-open sets constitute a useful framework for reformulating
and comparing classical and generalized separation principles.

A further development was obtained by Tomar, Rani and Subadra
\cite{Tomar2026}, who introduced $\pi H^{*}$-closed sets as an
extension of the $H^{*}$-closed structure. They investigated the
fundamental properties of these sets, the corresponding
$\pi H^{*}$-open sets, and an associated separation condition. Thus,
the progression from $H^{*}$-closed sets to $\pi H^{*}$-closed sets
shows that the $H^{*}$-framework is sufficiently flexible to support
further generalized set constructions and separation properties.

Difference-set methods provide another useful approach to separation
theory. Tong \cite{Tong1982} introduced the classical $D_1$
separation axiom by means of differences of open sets, obtaining a
separation condition situated between the classical $T_0$ and
$T_1$ axioms. This approach is particularly compatible with
generalized openness, since ordinary open sets occurring in a
difference-set construction may be replaced by an appropriate
generalized class.

The preceding developments naturally raise a more fundamental
structural question. Although $H^{*}$-open sets have been used to
formulate several generalized topological notions, the family
\[
H^{*}O(\mathbb{X})
\]
should not, in general, be identified automatically with an ordinary
topology. In particular, the definition of an $H^{*}$-open set by
itself does not guarantee the finite-intersection property required
of a topology. It is therefore important to determine precisely what
type of structure is carried by $H^{*}O(\mathbb{X})$ and which
additional conditions are required for that structure to become an
ordinary topology.

This observation motivates the present work. We first consider
$H^{*}O(\mathbb{X})$ at the level of generalized topology. When
$H^{*}C(\mathbb{X})$ is closed under arbitrary intersections, the
family $H^{*}O(\mathbb{X})$ is closed under arbitrary unions and
hence determines a strong generalized topology, which we denote by
\[
\Gamma_{H^{*}}=H^{*}O(\mathbb{X}).
\]
We then identify the additional finite-intersection requirement
under which this generalized topology becomes an ordinary topology.
In that case, we write
\[
\tau_{H^{*}}=H^{*}O(\mathbb{X}).
\]
Thus, $\Gamma_{H^{*}}$ and $\tau_{H^{*}}$ represent two distinct
levels of structure and are not used interchangeably.

This distinction allows the $H^{*}$-closure and $H^{*}$-interior
operators to be studied in a systematic manner. At the
generalized-topological level, we examine their basic structural
properties under the appropriate closure assumptions. When
$\tau_{H^{*}}$ exists, we show that
\[
H^{*}\operatorname{Cl}(\mathbb{A})
=
\operatorname{Cl}_{\tau_{H^{*}}}(\mathbb{A})
\]
and
\[
H^{*}\operatorname{Int}(\mathbb{A})
=
\operatorname{Int}_{\tau_{H^{*}}}(\mathbb{A}),
\]
thereby identifying the $H^{*}$-operators with the ordinary closure
and interior operators of the associated space
$(\mathbb{X},\tau_{H^{*}})$.

The associated topology then provides a natural bridge between
generalized $H^{*}$-properties and classical topology. We use this
viewpoint to examine $H^{*}$-$T_0$, $H^{*}$-$T_1$, and
$H^{*}$-$T_2$ separation, as well as $H^{*}$-difference separation.
Higher separation properties, including regularity, complete
regularity, and normality, are also considered under the hypotheses
required for the associated topology. In addition, we study
$H^{*}$-compactness and $H^{*}$-irresolute mappings and relate them
to their classical counterparts in the associated spaces.

A further objective is to compare the original topology with the
structure induced by $H^{*}$-open sets. Suitable finite and infinite
examples show that the associated topology, when it exists, may
differ considerably from the original topology. In particular, the
$H^{*}$-generated topology may be discrete even when the original
space fails to satisfy the $T_0$ separation axiom. These examples
make clear that the passage from $\tau$ to the $H^{*}$-generated
structure is not merely a change of notation but may substantially
alter the separation behaviour of the space.

Accordingly, the present study distinguishes the three structures
\[
(\mathbb{X},\tau),\qquad
(\mathbb{X},\Gamma_{H^{*}}),\qquad
(\mathbb{X},\tau_{H^{*}}).
\]
Here, $(\mathbb{X},\tau)$ is the original topological space;
$\Gamma_{H^{*}}$ denotes the strong generalized topology determined
by $H^{*}$-open sets under the required arbitrary-union condition;
and $\tau_{H^{*}}$ denotes the associated ordinary topology whenever
the additional finite-intersection condition is satisfied. This
three-level viewpoint provides a unified framework for identifying
which properties belong intrinsically to the generalized
$H^{*}$-structure and which can be interpreted through ordinary
topological methods.

The remainder of the paper is organized as follows.
Section~\ref{sec:preliminaries} recalls the definitions and
preliminary results needed in the sequel.
Section~\ref{sec:Hstar-generalized-topology} studies the structure
induced by $H^{*}$-open sets and gives conditions under which it is
a strong generalized topology or an ordinary topology.
Section~\ref{sec:operators} investigates the $H^{*}$-closure and
$H^{*}$-interior operators.
Section~\ref{sec:separation} develops the relevant separation
properties, and Section~\ref{sec:higher-separation} treats higher
separation conditions through the associated topology.
Section~\ref{sec:examples} presents examples illustrating the
relationship between the original and associated structures.
Section~\ref{sec:compactness} discusses $H^{*}$-compactness, while
Section~\ref{sec:mappings} is devoted to mappings and preservation
properties.
Section~\ref{sec:comparison} relates the resulting theory to the
generalized-topological viewpoint.
Finally, Section~\ref{sec:conclusion} summarizes the main conclusions
and indicates directions for further investigation.

\section{Preliminaries}
\label{sec:preliminaries}

Throughout this paper, $(\mathbb{X},\tau)$ denotes a topological
space. For a subset $\mathbb{A}\subseteq\mathbb{X}$, the ordinary
closure and interior of $\mathbb{A}$ are denoted by
$\operatorname{cl}(\mathbb{A})$ and
$\operatorname{int}(\mathbb{A})$, respectively.

We first recall some generalized forms of closed sets that are
required for the definition and subsequent development of
$H^{*}$-closed sets. The terminology used below follows the
corresponding notions available in the literature
\cite{levine1963semi,levine1970gen,njȧstad1965some,
rodrigo2012international}.

\begin{definition}
\label{def:generalized-closed-sets}
Let $\mathbb{A}\subseteq\mathbb{X}$. Then $\mathbb{A}$ is said to be:
\begin{enumerate}[label=(\roman*)]

    \item \emph{regular-closed} \cite{stone1937applications} if
    \[
    \mathbb{A}
    =
    \operatorname{cl}
    \bigl(\operatorname{int}(\mathbb{A})\bigr);
    \]

    \item \emph{semi-closed} \cite{levine1963semi} if
    \[
    \operatorname{int}
    \bigl(\operatorname{cl}(\mathbb{A})\bigr)
    \subseteq \mathbb{A};
    \]

    \item \emph{$w$-closed} \cite{Sundaram2000} if
    \[
    \operatorname{cl}(\mathbb{A})\subseteq\mathbb{U},
    \]
    whenever $\mathbb{A}\subseteq\mathbb{U}$ and
    $\mathbb{U}$ is semi-open in $\mathbb{X}$;

    \item \emph{$\alpha$-closed} \cite{njȧstad1965some} if
    \[
    \operatorname{cl}
    \bigl(
        \operatorname{int}
        (\operatorname{cl}(\mathbb{A}))
    \bigr)
    \subseteq\mathbb{A};
    \]

    \item an \emph{$\alpha^{*}$-set} \cite{hatir1996decomposition} if
    \[
    \operatorname{int}
    \bigl(
        \operatorname{cl}
        (\operatorname{int}(\mathbb{A}))
    \bigr)
    =
    \operatorname{int}(\mathbb{A});
    \]

    \item a \emph{$C$-set} \cite{hatir1996decomposition} if
    \[
    \mathbb{A}=\mathbb{U}\cap\mathbb{V},
    \]
    where $\mathbb{U}$ is open and $\mathbb{V}$ is an
    $\alpha^{*}$-set in $\mathbb{X}$;

    \item \emph{$h$-closed} \cite{rodrigo2012international} if
    \[
    s\text{-}\operatorname{cl}(\mathbb{A})
    \subseteq\mathbb{U},
    \]
    whenever $\mathbb{A}\subseteq\mathbb{U}$ and
    $\mathbb{U}$ is $w$-open in $\mathbb{X}$;

    \item \emph{$gh$-closed} \cite{rodrigo2012international} if
    \[
    h\text{-}\operatorname{cl}(\mathbb{A})
    \subseteq\mathbb{U},
    \]
    whenever $\mathbb{A}\subseteq\mathbb{U}$ and
    $\mathbb{U}$ is $h$-open in $\mathbb{X}$;

    \item \emph{regular-$h$-open} \cite{rodrigo2012international} if there exists a regular-open
    set $\mathbb{U}$ such that
    \[
    \mathbb{U}
    \subseteq\mathbb{A}
    \subseteq
    h\text{-}\operatorname{cl}(\mathbb{U});
    \]

    \item \emph{$rgh$-closed} \cite{rodrigo2012international} if
    \[
    h\text{-}\operatorname{cl}(\mathbb{A})
    \subseteq\mathbb{U},
    \]
    whenever $\mathbb{A}\subseteq\mathbb{U}$ and
    $\mathbb{U}$ is regular-$h$-open in $\mathbb{X}$;

    \item \emph{$hCg$-closed} \cite{rodrigo2012international} if
    \[
    h\text{-}\operatorname{cl}(\mathbb{A})
    \subseteq\mathbb{U},
    \]
    whenever $\mathbb{A}\subseteq\mathbb{U}$ and
    $\mathbb{U}$ is a $C$-set in $\mathbb{X}$.

\end{enumerate}
\end{definition}

The complements of regular-closed, semi-closed, $w$-closed,
$\alpha$-closed, $h$-closed, $gh$-closed, $rgh$-closed, and
$hCg$-closed sets are called regular-open, semi-open, $w$-open,
$\alpha$-open, $h$-open, $gh$-open, $rgh$-open, and $hCg$-open
sets, respectively.

For a subset $\mathbb{A}\subseteq\mathbb{X}$, its $h$-closure \cite{rodrigo2012international} and
semi-closure are denoted by
\[
h\text{-}\operatorname{cl}(\mathbb{A})
\quad\text{and}\quad
s\text{-}\operatorname{cl}(\mathbb{A}),
\]
respectively. Similarly, the corresponding interiors are denoted by
\[
h\text{-}\operatorname{int}(\mathbb{A})
\quad\text{and}\quad
s\text{-}\operatorname{int}(\mathbb{A}).
\]

\subsection{$H^{*}$-Closed and $H^{*}$-Open Sets}\cite{Tomar2026}
\label{subsec:Hstar-sets}

We next recall the notion of an $H^{*}$-closed set, which forms the
basis of the structure investigated in this paper.

\begin{definition}
\label{def:Hstar-closed}
Let $\mathbb{A}\subseteq\mathbb{X}$. Then $\mathbb{A}$ is called
\emph{$H^{*}$-closed} \cite{Tomar2026} if
\[
h\text{-}\operatorname{cl}(\mathbb{A})
\subseteq\mathbb{U}
\]
whenever
\[
\mathbb{A}\subseteq\mathbb{U}
\]
and $\mathbb{U}$ is $hCg$-open in $\mathbb{X}$.

A subset $\mathbb{A}$ is called \emph{$H^{*}$-open} if
$\mathbb{X}\setminus\mathbb{A}$ is $H^{*}$-closed.
\end{definition}

We denote the families of all $H^{*}$-closed and $H^{*}$-open
subsets of $\mathbb{X}$ by
\[
H^{*}C(\mathbb{X})
=
\left\{
\mathbb{F}\subseteq\mathbb{X}:
\mathbb{F}\text{ is }H^{*}\text{-closed}
\right\}
\]
and
\[
H^{*}O(\mathbb{X})
=
\left\{
\mathbb{U}\subseteq\mathbb{X}:
\mathbb{U}\text{ is }H^{*}\text{-open}
\right\},
\]
respectively.

The following elementary observation will be used when constructing
the generalized topology generated by $H^{*}$-open sets.

\begin{proposition}
\label{prop:empty-X-Hstar}
For every topological space $(\mathbb{X},\tau)$,
\[
\varnothing,\mathbb{X}\in H^{*}C(\mathbb{X})
\]
and consequently
\[
\varnothing,\mathbb{X}\in H^{*}O(\mathbb{X}).
\]
\end{proposition}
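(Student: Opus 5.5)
The plan is to check Definition~\ref{def:Hstar-closed} directly for the two sets $\varnothing$ and $\mathbb{X}$. The only outside fact needed is that $h\text{-}\operatorname{cl}$ behaves like a closure operator at the extremes: $h\text{-}\operatorname{cl}(\varnothing)=\varnothing$ and $h\text{-}\operatorname{cl}(\mathbb{X})=\mathbb{X}$. The $H^{*}$-open statement then follows from the closed one by taking complements.

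\textbf{The empty set.} Let $\mathbb{U}$ be any $hCg$-open set with $\varnothing\subseteq\mathbb{U}$; every $hCg$-open set qualifies. We must show $h\text{-}\operatorname{cl}(\varnothing)\subseteq\mathbb{U}$. I would use the standard description of $h\text{-}\operatorname{cl}(\mathbb{A})$ in \cite{rodrigo2012international}: it is the intersection of all $h$-closed sets containing $\mathbb{A}$. So it suffices to see that $\varnothing$ is $h$-closed. In Definition~\ref{def:generalized-closed-sets}(vii) we need $s\text{-}\operatorname{cl}(\varnothing)\subseteq\mathbb{U}$ for every $w$-open $\mathbb{U}$, and this holds because $\varnothing$ is closed, so $s\text{-}\operatorname{cl}(\varnothing)=\varnothing$. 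Hence $h\text{-}\operatorname{cl}(\varnothing)=\varnothing\subseteq\mathbb{U}$, and $\varnothing$ is $H^{*}$-closed.

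\textbf{The whole space.} If $\mathbb{X}\subseteq\mathbb{U}$ with $\mathbb{U}$ $hCg$-open, then $\mathbb{U}=\mathbb{X}$. Since $h\text{-}\operatorname{cl}(\mathbb{X})\subseteq\mathbb{X}$ trivially, the required inclusion holds. Note that this case does not even depend on whether $\mathbb{X}$ is $hCg$-open. If no $hCg$-open set contains $\mathbb{X}$, the condition is vacuous. In fact $\mathbb{X}$ is $hCg$-open, because its complement $\varnothing$ is $hCg$-closed by the same $h\text{-}\operatorname{cl}(\varnothing)=\varnothing$ argument. Either way, $\mathbb{X}$ is $H^{*}$-closed.

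\textbf{Complements and the main point of care.} Since $\mathbb{X}\setminus\varnothing=\mathbb{X}$ and $\mathbb{X}\setminus\mathbb{X}=\varnothing$ are both $H^{*}$-closed, the definition of $H^{*}$-open gives $\varnothing,\mathbb{X}\in H^{*}O(\mathbb{X})$. The only step requiring care is the identity $h\text{-}\operatorname{cl}(\varnothing)=\varnothing$, since the paper does not restate the definition of $h$-closure. I would justify it through the intersection-of-$h$-closed-supersets description, checking as above that $\varnothing$ is $h$-closed via $s\text{-}\operatorname{cl}(\varnothing)=\varnothing$.
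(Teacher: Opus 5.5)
Your proposal is correct and matches the paper's proof: check Definition~\ref{def:Hstar-closed} directly using $h\text{-}\operatorname{cl}(\varnothing)=\varnothing$ and the fact that the only superset of $\mathbb{X}$ in $\mathbb{X}$ is $\mathbb{X}$ itself, then pass to complements. The only addition is that you justify $h\text{-}\operatorname{cl}(\varnothing)=\varnothing$ by showing $\varnothing$ is $h$-closed, which the paper simply asserts.
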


\begin{proof}
Since
\[
h\text{-}\operatorname{cl}(\varnothing)=\varnothing,
\]
the defining condition of
Definition~\ref{def:Hstar-closed} is satisfied for
$\varnothing$. Hence
\[
\varnothing\in H^{*}C(\mathbb{X}).
\]

Now consider $\mathbb{X}$. If
\[
\mathbb{X}\subseteq\mathbb{U}\subseteq\mathbb{X},
\]
then necessarily $\mathbb{U}=\mathbb{X}$. Since
\[
h\text{-}\operatorname{cl}(\mathbb{X})=\mathbb{X},
\]
it follows that
\[
h\text{-}\operatorname{cl}(\mathbb{X})
\subseteq\mathbb{U}.
\]
Thus $\mathbb{X}$ is $H^{*}$-closed. The corresponding statements
for $H^{*}$-open sets follow immediately by taking complements.
\end{proof}

\subsection{$H^{*}$-Closure and $H^{*}$-Interior}\cite{Tomar2026}
\label{subsec:Hstar-operators-prelim}

\begin{definition}
\label{def:Hstar-operators}
Let $\mathbb{A}\subseteq\mathbb{X}$. The
\emph{$H^{*}$-closure} of $\mathbb{A}$ is defined by
\begin{equation}
H^{*}\operatorname{Cl}(\mathbb{A})
=
\bigcap
\left\{
\mathbb{F}\in H^{*}C(\mathbb{X}):
\mathbb{A}\subseteq\mathbb{F}
\right\},
\label{eq:Hstar-closure}
\end{equation}
and the \emph{$H^{*}$-interior} of $\mathbb{A}$ is defined by
\begin{equation}
H^{*}\operatorname{Int}(\mathbb{A})
=
\bigcup
\left\{
\mathbb{U}\in H^{*}O(\mathbb{X}):
\mathbb{U}\subseteq\mathbb{A}
\right\}.
\label{eq:Hstar-interior}
\end{equation}
\end{definition}

Notice that Definition~\ref{def:Hstar-operators} does not require
$H^{*}O(\mathbb{X})$ to be an ordinary topology. The operators are
defined directly from the families of $H^{*}$-closed and
$H^{*}$-open sets.

\begin{proposition}
\label{prop:Hstar-basic-operators}
For all $\mathbb{A},\mathbb{B}\subseteq\mathbb{X}$, the following
statements hold:
\begin{enumerate}[label=(\roman*)]

    \item
    \[
    \mathbb{A}
    \subseteq
    H^{*}\operatorname{Cl}(\mathbb{A});
    \]

    \item if $\mathbb{A}\subseteq\mathbb{B}$, then
    \[
    H^{*}\operatorname{Cl}(\mathbb{A})
    \subseteq
    H^{*}\operatorname{Cl}(\mathbb{B});
    \]

    \item
    \[
    H^{*}\operatorname{Int}(\mathbb{A})
    \subseteq
    \mathbb{A};
    \]

    \item if $\mathbb{A}\subseteq\mathbb{B}$, then
    \[
    H^{*}\operatorname{Int}(\mathbb{A})
    \subseteq
    H^{*}\operatorname{Int}(\mathbb{B}).
    \]

\end{enumerate}
\end{proposition}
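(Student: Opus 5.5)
The four statements follow directly from Definition~\ref{def:Hstar-operators}, using only the set-theoretic form of the closure as an intersection and the interior as a union. No closure property of $H^{*}C(\mathbb{X})$ under intersections, or of $H^{*}O(\mathbb{X})$ under unions, is needed. The only structural input is Proposition~\ref{prop:empty-X-Hstar}, which guarantees that the defining families are nonempty.

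For (i), every member $\mathbb{F}$ of the family in \eqref{eq:Hstar-closure} satisfies $\mathbb{A}\subseteq\mathbb{F}$, so $\mathbb{A}$ lies in their intersection. By Proposition~\ref{prop:empty-X-Hstar}, $\mathbb{X}\in H^{*}C(\mathbb{X})$ and $\mathbb{A}\subseteq\mathbb{X}$. Hence the family is nonempty, and the intersection is an honest subset of $\mathbb{X}$ rather than the ambiguous intersection of an empty family.

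For (ii), suppose $\mathbb{A}\subseteq\mathbb{B}$. Any $H^{*}$-closed $\mathbb{F}$ with $\mathbb{B}\subseteq\mathbb{F}$ also satisfies $\mathbb{A}\subseteq\mathbb{F}$. So the family indexing $H^{*}\operatorname{Cl}(\mathbb{B})$ is contained in the family indexing $H^{*}\operatorname{Cl}(\mathbb{A})$. Intersecting over a larger family gives a smaller set, so
\[
H^{*}\operatorname{Cl}(\mathbb{A})\subseteq H^{*}\operatorname{Cl}(\mathbb{B}).
\]

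For (iii), each $\mathbb{U}$ in the union \eqref{eq:Hstar-interior} satisfies $\mathbb{U}\subseteq\mathbb{A}$, so the union is contained in $\mathbb{A}$. The family is nonempty because it contains $\varnothing$, again by Proposition~\ref{prop:empty-X-Hstar}; in any case an empty union would simply be $\varnothing$.

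For (iv), suppose $\mathbb{A}\subseteq\mathbb{B}$. Every $H^{*}$-open $\mathbb{U}\subseteq\mathbb{A}$ also satisfies $\mathbb{U}\subseteq\mathbb{B}$. So the family indexing $H^{*}\operatorname{Int}(\mathbb{A})$ is contained in the family indexing $H^{*}\operatorname{Int}(\mathbb{B})$, and the union over the smaller family is smaller.

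There is no genuine obstacle. The only point needing care is the nonemptiness of the indexing family in (i), which is exactly where Proposition~\ref{prop:empty-X-Hstar} is used.
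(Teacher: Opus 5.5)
Your proof is correct and matches the paper's: (i) and (iii) follow because each member of the defining intersection contains $\mathbb{A}$ and each member of the defining union lies in $\mathbb{A}$, and (ii) and (iv) follow by comparing the indexing families. Your nonemptiness remark via Proposition~\ref{prop:empty-X-Hstar} is a harmless refinement the paper omits, and it is not strictly needed: with intersections taken inside $\mathbb{X}$, an empty intersection would be $\mathbb{X}$ anyway.
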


\begin{proof}
Statement (i) follows directly from
\eqref{eq:Hstar-closure}, since every $H^{*}$-closed set occurring
in the defining intersection contains $\mathbb{A}$.

Suppose that $\mathbb{A}\subseteq\mathbb{B}$. Every
$H^{*}$-closed set containing $\mathbb{B}$ also contains
$\mathbb{A}$. Hence the family of $H^{*}$-closed supersets used to
define $H^{*}\operatorname{Cl}(\mathbb{B})$ is contained in the
corresponding family used to define
$H^{*}\operatorname{Cl}(\mathbb{A})$. Therefore,
\[
H^{*}\operatorname{Cl}(\mathbb{A})
\subseteq
H^{*}\operatorname{Cl}(\mathbb{B}),
\]
which proves (ii).

Statement (iii) follows from \eqref{eq:Hstar-interior}, since every
member of the defining union is contained in $\mathbb{A}$.

Finally, if $\mathbb{A}\subseteq\mathbb{B}$, every $H^{*}$-open
subset of $\mathbb{A}$ is also an $H^{*}$-open subset of
$\mathbb{B}$. Consequently,
\[
H^{*}\operatorname{Int}(\mathbb{A})
\subseteq
H^{*}\operatorname{Int}(\mathbb{B}),
\]
and (iv) follows.
\end{proof}

\begin{proposition}
\label{prop:Hstar-duality}
For every $\mathbb{A}\subseteq\mathbb{X}$,
\begin{equation}
H^{*}\operatorname{Int}(\mathbb{A})
=
\mathbb{X}\setminus
H^{*}\operatorname{Cl}
(\mathbb{X}\setminus\mathbb{A}).
\label{eq:Hstar-int-cl-duality}
\end{equation}
Equivalently,
\begin{equation}
H^{*}\operatorname{Cl}(\mathbb{A})
=
\mathbb{X}\setminus
H^{*}\operatorname{Int}
(\mathbb{X}\setminus\mathbb{A}).
\label{eq:Hstar-cl-int-duality}
\end{equation}
\end{proposition}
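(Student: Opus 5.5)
The identity is pure complement bookkeeping. It follows from the fact that, by Definition~\ref{def:Hstar-closed}, the map $\mathbb{F}\mapsto\mathbb{X}\setminus\mathbb{F}$ is a bijection from $H^{*}C(\mathbb{X})$ onto $H^{*}O(\mathbb{X})$. I will prove \eqref{eq:Hstar-int-cl-duality} directly from the defining formulas \eqref{eq:Hstar-closure} and \eqref{eq:Hstar-interior}. No closure of these families under unions or intersections will be used, since none is available at this stage. By Proposition~\ref{prop:empty-X-Hstar} both defining families are nonempty ($\mathbb{X}$ is an $H^{*}$-closed superset of any set, and $\varnothing$ is an $H^{*}$-open subset of any set), so the intersection and union are ordinary, well-defined ones.

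\textbf{Step 1 (matching the index families).} For $\mathbb{U}\subseteq\mathbb{X}$, put $\mathbb{F}=\mathbb{X}\setminus\mathbb{U}$. Then $\mathbb{U}\in H^{*}O(\mathbb{X})$ if and only if $\mathbb{F}\in H^{*}C(\mathbb{X})$. Also $\mathbb{U}\subseteq\mathbb{A}$ if and only if $\mathbb{X}\setminus\mathbb{A}\subseteq\mathbb{F}$. Hence complementation carries
\[
\{\mathbb{U}\in H^{*}O(\mathbb{X}):\mathbb{U}\subseteq\mathbb{A}\}
\]
bijectively onto
\[
\{\mathbb{F}\in H^{*}C(\mathbb{X}):\mathbb{X}\setminus\mathbb{A}\subseteq\mathbb{F}\}.
\]

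\textbf{Step 2 (De Morgan).} Applying the law $\mathbb{X}\setminus\bigcap_i\mathbb{F}_i=\bigcup_i(\mathbb{X}\setminus\mathbb{F}_i)$ to the family of Step 1 gives
\[
\mathbb{X}\setminus H^{*}\operatorname{Cl}(\mathbb{X}\setminus\mathbb{A})
=\bigcup\{\mathbb{X}\setminus\mathbb{F}:\mathbb{F}\in H^{*}C(\mathbb{X}),\ \mathbb{X}\setminus\mathbb{A}\subseteq\mathbb{F}\}
=H^{*}\operatorname{Int}(\mathbb{A}).
\]
This is \eqref{eq:Hstar-int-cl-duality}.

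\textbf{Step 3 (the equivalent form).} Replace $\mathbb{A}$ by $\mathbb{X}\setminus\mathbb{A}$ in \eqref{eq:Hstar-int-cl-duality} and take complements of both sides. This yields \eqref{eq:Hstar-cl-int-duality}.

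There is no real obstacle. The only point needing care is the reindexing in Step 1, namely that both the membership condition and the inclusion condition transform correctly under complementation.
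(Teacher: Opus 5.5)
Your proof is correct and follows essentially the same route as the paper. Complementation matches the $H^{*}$-open subsets of $\mathbb{A}$ with the $H^{*}$-closed supersets of $\mathbb{X}\setminus\mathbb{A}$, De Morgan's law then gives the first identity, and substituting $\mathbb{X}\setminus\mathbb{A}$ for $\mathbb{A}$ and complementing gives the second.
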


\begin{proof}
By Definition~\ref{def:Hstar-operators},
\[
H^{*}\operatorname{Int}(\mathbb{A})
=
\bigcup
\left\{
\mathbb{U}\in H^{*}O(\mathbb{X}):
\mathbb{U}\subseteq\mathbb{A}
\right\}.
\]
Since $\mathbb{U}$ is $H^{*}$-open if and only if
$\mathbb{X}\setminus\mathbb{U}$ is $H^{*}$-closed, the condition
$\mathbb{U}\subseteq\mathbb{A}$ is equivalent to
\[
\mathbb{X}\setminus\mathbb{A}
\subseteq
\mathbb{X}\setminus\mathbb{U}.
\]
Hence, by De Morgan's law,
\[
H^{*}\operatorname{Int}(\mathbb{A})
=
\mathbb{X}\setminus
\bigcap
\left\{
\mathbb{F}\in H^{*}C(\mathbb{X}):
\mathbb{X}\setminus\mathbb{A}\subseteq\mathbb{F}
\right\}.
\]
The intersection on the right-hand side is precisely
\[
H^{*}\operatorname{Cl}
(\mathbb{X}\setminus\mathbb{A}),
\]
which proves \eqref{eq:Hstar-int-cl-duality}. The second identity
follows by replacing $\mathbb{A}$ with
$\mathbb{X}\setminus\mathbb{A}$.
\end{proof}

\subsection{Generalized Topological Spaces}
\label{subsec:generalized-topology}

We finally recall the generalized-topological terminology required
in Section~\ref{sec:Hstar-generalized-topology}. The notion of a
generalized topology was introduced by Cs\'asz\'ar and has since
been used extensively in the study of generalized open sets and
separation properties.

\begin{definition}
\label{def:generalized-topology}
Let $\mathbb{X}$ be a nonempty set. A family
$\Gamma\subseteq\mathcal{P}(\mathbb{X})$ is called a
\emph{generalized topology} on $\mathbb{X}$ if:
\begin{enumerate}[label=(\roman*)]
    \item $\varnothing\in\Gamma$;
    \item for every family
    $\{\mathbb{U}_{\lambda}:\lambda\in\Lambda\}\subseteq\Gamma$,
    \[
    \bigcup_{\lambda\in\Lambda}\mathbb{U}_{\lambda}\in\Gamma.
    \]
\end{enumerate}
The pair $(\mathbb{X},\Gamma)$ is called a
\emph{generalized topological space}.
\end{definition}

\begin{definition}
\label{def:strong-generalized-topology}
A generalized topology $\Gamma$ on $\mathbb{X}$ is called
\emph{strong} if
\[
\mathbb{X}\in\Gamma.
\]
In this case, $(\mathbb{X},\Gamma)$ is called a
\emph{strong generalized topological space}.
\end{definition}

Unlike an ordinary topology, a generalized topology is not required
to be closed under finite intersections. Consequently, even a
strong generalized topology need not be an ordinary topology. This
distinction is fundamental to the $H^{*}$-generated structure
studied in Section~\ref{sec:Hstar-generalized-topology}.

\begin{definition}
\label{def:Gamma-interior}
Let $(\mathbb{X},\Gamma)$ be a generalized topological space and
let $\mathbb{A}\subseteq\mathbb{X}$. The $\Gamma$-interior of
$\mathbb{A}$ is defined by
\[
i_{\Gamma}(\mathbb{A})
=
\bigcup
\left\{
\mathbb{U}\in\Gamma:
\mathbb{U}\subseteq\mathbb{A}
\right\}.
\]
\end{definition}

\begin{definition}
\label{def:Gamma-closure}
Let $(\mathbb{X},\Gamma)$ be a generalized topological space. A
subset $\mathbb{F}\subseteq\mathbb{X}$ is called
$\Gamma$-closed if
\[
\mathbb{X}\setminus\mathbb{F}\in\Gamma.
\]
For $\mathbb{A}\subseteq\mathbb{X}$, the $\Gamma$-closure of
$\mathbb{A}$ is defined by
\[
c_{\Gamma}(\mathbb{A})
=
\bigcap
\left\{
\mathbb{F}\subseteq\mathbb{X}:
\mathbb{A}\subseteq\mathbb{F}
\text{ and }\mathbb{F}\text{ is }\Gamma\text{-closed}
\right\}.
\]
\end{definition}

\begin{remark}
\label{rem:Hstar-not-automatic-topology}
The family $H^{*}O(\mathbb{X})$ should not be identified with an
ordinary topology without additional assumptions.
Proposition~\ref{prop:empty-X-Hstar} guarantees only that
\[
\varnothing,\mathbb{X}\in H^{*}O(\mathbb{X}).
\]
Closure under arbitrary unions and closure under finite
intersections are separate structural requirements. In particular,
finite intersections of $H^{*}$-open sets need not, in general, be
$H^{*}$-open. Accordingly, the generalized-topological structure
and the associated ordinary topology, whenever the latter exists,
will be distinguished throughout the paper.
\end{remark}

\section{The Generalized Topology Induced by $H^{*}$-Open Sets}
\label{sec:Hstar-generalized-topology}

In this section, we investigate the structure generated by the family
of $H^{*}$-open subsets of a topological space
$(\mathbb{X},\tau)$. As observed in
Remark~\ref{rem:Hstar-not-automatic-topology}, the family
$H^{*}O(\mathbb{X})$ need not be an ordinary topology. We first
identify a sufficient condition under which this family determines a
strong generalized topology and then characterize when the resulting
generalized topology is an ordinary topology.

\begin{theorem}
\label{thm:Hstar-strong-generalized-topology}
Let $(\mathbb{X},\tau)$ be a topological space. Suppose that
$H^{*}C(\mathbb{X})$ is closed under arbitrary intersections. Then
\[
\Gamma_{H^{*}}=H^{*}O(\mathbb{X})
\]
is a strong generalized topology on $\mathbb{X}$.
\end{theorem}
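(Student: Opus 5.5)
The plan is to verify directly the three defining requirements of a strong generalized topology from Definitions~\ref{def:generalized-topology} and~\ref{def:strong-generalized-topology}: that $\varnothing\in\Gamma_{H^{*}}$, that $\mathbb{X}\in\Gamma_{H^{*}}$, and that $\Gamma_{H^{*}}$ is closed under arbitrary unions. The first two are already supplied by Proposition~\ref{prop:empty-X-Hstar}, which gives $\varnothing,\mathbb{X}\in H^{*}O(\mathbb{X})$ for every topological space, with no use of the hypothesis. So all the work lies in the union axiom, and the hypothesis on $H^{*}C(\mathbb{X})$ will be used only there.

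For the union axiom, I take an arbitrary family $\{\mathbb{U}_{\lambda}:\lambda\in\Lambda\}\subseteq H^{*}O(\mathbb{X})$ and put $\mathbb{F}_{\lambda}=\mathbb{X}\setminus\mathbb{U}_{\lambda}$. By Definition~\ref{def:Hstar-closed}, each $\mathbb{F}_{\lambda}$ lies in $H^{*}C(\mathbb{X})$. De Morgan's law gives
\[
\mathbb{X}\setminus\bigcup_{\lambda\in\Lambda}\mathbb{U}_{\lambda}
=
\bigcap_{\lambda\in\Lambda}\mathbb{F}_{\lambda}.
\]
By the hypothesis that $H^{*}C(\mathbb{X})$ is closed under arbitrary intersections, the right-hand side is $H^{*}$-closed. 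Hence $\bigcup_{\lambda}\mathbb{U}_{\lambda}$ is $H^{*}$-open, i.e.\ it belongs to $\Gamma_{H^{*}}$.

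The only point needing care is the empty index set $\Lambda=\varnothing$. There the union is $\varnothing$ and the intersection is conventionally $\mathbb{X}$, so the required conclusion reduces to $\varnothing\in H^{*}O(\mathbb{X})$, which Proposition~\ref{prop:empty-X-Hstar} already covers. I will therefore treat $\Lambda\neq\varnothing$ in the main argument and dispatch the empty case by that proposition, so the proof does not depend on how the hypothesis treats the empty intersection.

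There is no real obstacle here; the statement is essentially a complementation argument. I will also note after the proof that no finite-intersection property is claimed, consistent with Remark~\ref{rem:Hstar-not-automatic-topology}.
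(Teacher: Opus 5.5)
Your proposal is correct and follows the paper's proof essentially step for step. You take $\varnothing,\mathbb{X}\in H^{*}O(\mathbb{X})$ from Proposition~\ref{prop:empty-X-Hstar}, then obtain closure under arbitrary unions by passing to complements, applying De Morgan's law, and invoking the arbitrary-intersection hypothesis on $H^{*}C(\mathbb{X})$. Your separate treatment of the empty index set is a small extra bit of care that the paper leaves implicit, and you handle it correctly with the same proposition.
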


\begin{proof}
By Proposition~\ref{prop:empty-X-Hstar},
\[
\varnothing,\mathbb{X}\in H^{*}O(\mathbb{X}).
\]
It remains to verify closure under arbitrary unions.

Let
\[
\{\mathbb{U}_{\lambda}:\lambda\in\Lambda\}
\subseteq H^{*}O(\mathbb{X})
\]
be an arbitrary family. For every $\lambda\in\Lambda$,
\[
\mathbb{X}\setminus\mathbb{U}_{\lambda}
\in H^{*}C(\mathbb{X}).
\]
Since $H^{*}C(\mathbb{X})$ is closed under arbitrary intersections,
\[
\bigcap_{\lambda\in\Lambda}
\bigl(\mathbb{X}\setminus\mathbb{U}_{\lambda}\bigr)
\in H^{*}C(\mathbb{X}).
\]
By De Morgan's law,
\[
\mathbb{X}\setminus
\bigcup_{\lambda\in\Lambda}\mathbb{U}_{\lambda}
=
\bigcap_{\lambda\in\Lambda}
\bigl(\mathbb{X}\setminus\mathbb{U}_{\lambda}\bigr).
\]
Hence
\[
\mathbb{X}\setminus
\bigcup_{\lambda\in\Lambda}\mathbb{U}_{\lambda}
\in H^{*}C(\mathbb{X}),
\]
and therefore
\[
\bigcup_{\lambda\in\Lambda}\mathbb{U}_{\lambda}
\in H^{*}O(\mathbb{X}).
\]

Thus $H^{*}O(\mathbb{X})$ contains $\varnothing$ and
$\mathbb{X}$ and is closed under arbitrary unions. Consequently,
\[
\Gamma_{H^{*}}=H^{*}O(\mathbb{X})
\]
is a strong generalized topology on $\mathbb{X}$.
\end{proof}

Whenever the hypothesis of
Theorem~\ref{thm:Hstar-strong-generalized-topology} is satisfied, we
denote the resulting generalized topological space by
\[
(\mathbb{X},\Gamma_{H^{*}}),
\qquad
\Gamma_{H^{*}}=H^{*}O(\mathbb{X}),
\]
and refer to it as the \emph{$H^{*}$-generated generalized
topological space} associated with $(\mathbb{X},\tau)$.

\begin{proposition}
\label{prop:Gamma-Hstar-closed}
Under the hypothesis of
Theorem~\ref{thm:Hstar-strong-generalized-topology}, a subset
$\mathbb{F}\subseteq\mathbb{X}$ is
$\Gamma_{H^{*}}$-closed if and only if it is $H^{*}$-closed.
Consequently,
\[
\mathcal{C}_{\Gamma_{H^{*}}}
=
H^{*}C(\mathbb{X}),
\]
where $\mathcal{C}_{\Gamma_{H^{*}}}$ denotes the family of all
$\Gamma_{H^{*}}$-closed subsets of $\mathbb{X}$.
\end{proposition}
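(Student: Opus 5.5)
The plan is to unwind the definitions on both sides and reduce each membership to a single complementation step, with no further structure of $H^{*}$-sets needed. By Definition~\ref{def:Gamma-closure}, a subset $\mathbb{F}\subseteq\mathbb{X}$ is $\Gamma_{H^{*}}$-closed exactly when $\mathbb{X}\setminus\mathbb{F}\in\Gamma_{H^{*}}$. Under the hypothesis of Theorem~\ref{thm:Hstar-strong-generalized-topology}, $\Gamma_{H^{*}}$ is by definition the family $H^{*}O(\mathbb{X})$. So the condition becomes $\mathbb{X}\setminus\mathbb{F}\in H^{*}O(\mathbb{X})$.

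Next I will invoke Definition~\ref{def:Hstar-closed}: a set is $H^{*}$-open iff its complement is $H^{*}$-closed. Apply this to the set $\mathbb{X}\setminus\mathbb{F}$ and use $\mathbb{X}\setminus(\mathbb{X}\setminus\mathbb{F})=\mathbb{F}$. This gives $\mathbb{X}\setminus\mathbb{F}\in H^{*}O(\mathbb{X})$ if and only if $\mathbb{F}\in H^{*}C(\mathbb{X})$. Chaining the two equivalences proves the ``if and only if'' statement. The set equality $\mathcal{C}_{\Gamma_{H^{*}}}=H^{*}C(\mathbb{X})$ is then just this equivalence read elementwise over all $\mathbb{F}\subseteq\mathbb{X}$.

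There is no genuine obstacle. The only point needing care is to record where the hypothesis enters: it is needed only so that $\Gamma_{H^{*}}$ is a bona fide (strong) generalized topology, so that Definition~\ref{def:Gamma-closure} applies in the intended setting. The equivalence itself does not use closure under intersections beyond that.
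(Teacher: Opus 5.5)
Your proof is correct and is the paper's argument: you unwind $\Gamma_{H^{*}}$-closedness to $\mathbb{X}\setminus\mathbb{F}\in H^{*}O(\mathbb{X})$ and then use the complement definition of $H^{*}$-open sets to obtain $\mathbb{F}\in H^{*}C(\mathbb{X})$. Your remark that the arbitrary-intersection hypothesis only serves to make $\Gamma_{H^{*}}$ a generalized topology, and is not otherwise used, is also accurate.
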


\begin{proof}
By definition, $\mathbb{F}$ is $\Gamma_{H^{*}}$-closed if and only if
\[
\mathbb{X}\setminus\mathbb{F}\in\Gamma_{H^{*}}.
\]
Since
\[
\Gamma_{H^{*}}=H^{*}O(\mathbb{X}),
\]
this is equivalent to
\[
\mathbb{X}\setminus\mathbb{F}\in H^{*}O(\mathbb{X}).
\]
By the definition of an $H^{*}$-open set, the latter holds if and
only if
\[
\mathbb{F}\in H^{*}C(\mathbb{X}).
\]
Hence the two families of closed sets coincide.
\end{proof}

\begin{theorem}
\label{thm:Gamma-Hstar-operators}
Let $(\mathbb{X},\tau)$ satisfy the hypothesis of
Theorem~\ref{thm:Hstar-strong-generalized-topology}. Then, for every
$\mathbb{A}\subseteq\mathbb{X}$,
\[
c_{\Gamma_{H^{*}}}(\mathbb{A})
=
H^{*}\operatorname{Cl}(\mathbb{A})
\]
and
\[
i_{\Gamma_{H^{*}}}(\mathbb{A})
=
H^{*}\operatorname{Int}(\mathbb{A}).
\]
\end{theorem}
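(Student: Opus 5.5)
The plan is to compare the defining formulas term by term; both identities then follow directly from Proposition~\ref{prop:Gamma-Hstar-closed} and the definition $\Gamma_{H^{*}}=H^{*}O(\mathbb{X})$.

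For the closure, fix $\mathbb{A}\subseteq\mathbb{X}$. By Definition~\ref{def:Gamma-closure}, $c_{\Gamma_{H^{*}}}(\mathbb{A})$ is the intersection of all $\Gamma_{H^{*}}$-closed sets $\mathbb{F}$ with $\mathbb{A}\subseteq\mathbb{F}$. Proposition~\ref{prop:Gamma-Hstar-closed} gives $\mathcal{C}_{\Gamma_{H^{*}}}=H^{*}C(\mathbb{X})$. Hence the indexing family in Definition~\ref{def:Gamma-closure} is exactly
\[
\left\{\mathbb{F}\in H^{*}C(\mathbb{X}):\mathbb{A}\subseteq\mathbb{F}\right\},
\]
which is the family appearing in \eqref{eq:Hstar-closure}. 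Two intersections over the same family coincide, so $c_{\Gamma_{H^{*}}}(\mathbb{A})=H^{*}\operatorname{Cl}(\mathbb{A})$. This family always contains $\mathbb{X}$ by Proposition~\ref{prop:empty-X-Hstar}, so the intersection is never over an empty family.

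For the interior, Definition~\ref{def:Gamma-interior} expresses $i_{\Gamma_{H^{*}}}(\mathbb{A})$ as the union of all $\mathbb{U}\in\Gamma_{H^{*}}$ with $\mathbb{U}\subseteq\mathbb{A}$. Since $\Gamma_{H^{*}}=H^{*}O(\mathbb{X})$ by definition, this is literally the union in \eqref{eq:Hstar-interior}, so $i_{\Gamma_{H^{*}}}(\mathbb{A})=H^{*}\operatorname{Int}(\mathbb{A})$. As a consistency check, one could instead derive the interior identity from the closure identity, using Proposition~\ref{prop:Hstar-duality} together with the analogous complement duality $i_{\Gamma}(\mathbb{A})=\mathbb{X}\setminus c_{\Gamma}(\mathbb{X}\setminus\mathbb{A})$. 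That duality holds in any generalized topology by the same De Morgan argument used for Proposition~\ref{prop:Hstar-duality}.

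There is no substantive obstacle. The only point requiring care is to make explicit that the hypothesis of Theorem~\ref{thm:Hstar-strong-generalized-topology} is used only to legitimize calling $\Gamma_{H^{*}}$ a generalized topology, so that $c_{\Gamma_{H^{*}}}$ and $i_{\Gamma_{H^{*}}}$ are defined in the sense of Definitions~\ref{def:Gamma-interior} and~\ref{def:Gamma-closure}. The set-theoretic equalities themselves do not depend on closure under arbitrary intersections.
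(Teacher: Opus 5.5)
Your proposal is correct and follows the paper's proof essentially verbatim. The closure identity comes from Proposition~\ref{prop:Gamma-Hstar-closed}, which identifies the two indexing families, and the interior identity is immediate from $\Gamma_{H^{*}}=H^{*}O(\mathbb{X})$; your side remarks about the nonempty family, the duality check and the role of the arbitrary-intersection hypothesis are accurate but not needed.
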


\begin{proof}
By Definition~\ref{def:Gamma-closure},
\[
c_{\Gamma_{H^{*}}}(\mathbb{A})
=
\bigcap
\left\{
\mathbb{F}\subseteq\mathbb{X}:
\mathbb{A}\subseteq\mathbb{F}
\text{ and }
\mathbb{F}\text{ is }\Gamma_{H^{*}}\text{-closed}
\right\}.
\]
By Proposition~\ref{prop:Gamma-Hstar-closed}, the
$\Gamma_{H^{*}}$-closed sets are precisely the $H^{*}$-closed sets.
Therefore,
\[
c_{\Gamma_{H^{*}}}(\mathbb{A})
=
\bigcap
\left\{
\mathbb{F}\in H^{*}C(\mathbb{X}):
\mathbb{A}\subseteq\mathbb{F}
\right\}
=
H^{*}\operatorname{Cl}(\mathbb{A}).
\]

Similarly,
\[
i_{\Gamma_{H^{*}}}(\mathbb{A})
=
\bigcup
\left\{
\mathbb{U}\in\Gamma_{H^{*}}:
\mathbb{U}\subseteq\mathbb{A}
\right\}.
\]
Since $\Gamma_{H^{*}}=H^{*}O(\mathbb{X})$, we obtain
\[
i_{\Gamma_{H^{*}}}(\mathbb{A})
=
\bigcup
\left\{
\mathbb{U}\in H^{*}O(\mathbb{X}):
\mathbb{U}\subseteq\mathbb{A}
\right\}
=
H^{*}\operatorname{Int}(\mathbb{A}).
\]
\end{proof}

\begin{corollary}
\label{cor:Hstar-closure-closed}
Under the hypothesis of
Theorem~\ref{thm:Hstar-strong-generalized-topology}, for every
$\mathbb{A}\subseteq\mathbb{X}$,
\[
H^{*}\operatorname{Cl}(\mathbb{A})
\in H^{*}C(\mathbb{X})
\]
and
\[
H^{*}\operatorname{Int}(\mathbb{A})
\in H^{*}O(\mathbb{X}).
\]
\end{corollary}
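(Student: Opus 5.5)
The closure statement is immediate from the standing hypothesis, and the interior statement follows from it by the duality of Proposition~\ref{prop:Hstar-duality}.

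\textbf{Closure.} By \eqref{eq:Hstar-closure}, $H^{*}\operatorname{Cl}(\mathbb{A})$ is the intersection of the family
\[
\mathcal{F}_{\mathbb{A}}=\left\{\mathbb{F}\in H^{*}C(\mathbb{X}):\mathbb{A}\subseteq\mathbb{F}\right\}.
\]
This family is nonempty, because $\mathbb{X}\in\mathcal{F}_{\mathbb{A}}$ by Proposition~\ref{prop:empty-X-Hstar}. So $H^{*}\operatorname{Cl}(\mathbb{A})$ is an intersection of a nonempty subfamily of $H^{*}C(\mathbb{X})$. The hypothesis of Theorem~\ref{thm:Hstar-strong-generalized-topology} says $H^{*}C(\mathbb{X})$ is closed under arbitrary intersections. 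Hence $H^{*}\operatorname{Cl}(\mathbb{A})\in H^{*}C(\mathbb{X})$.

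\textbf{Interior.} Apply the closure statement to $\mathbb{X}\setminus\mathbb{A}$, which gives $H^{*}\operatorname{Cl}(\mathbb{X}\setminus\mathbb{A})\in H^{*}C(\mathbb{X})$. By \eqref{eq:Hstar-int-cl-duality},
\[
H^{*}\operatorname{Int}(\mathbb{A})=\mathbb{X}\setminus H^{*}\operatorname{Cl}(\mathbb{X}\setminus\mathbb{A}).
\]
This is the complement of an $H^{*}$-closed set, so it is $H^{*}$-open by Definition~\ref{def:Hstar-closed}.

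\textbf{Alternative route.} One could instead use Theorem~\ref{thm:Gamma-Hstar-operators}. It identifies $H^{*}\operatorname{Int}(\mathbb{A})$ with $i_{\Gamma_{H^{*}}}(\mathbb{A})$, which is a union of members of $\Gamma_{H^{*}}$. That union lies in $\Gamma_{H^{*}}=H^{*}O(\mathbb{X})$ by Theorem~\ref{thm:Hstar-strong-generalized-topology}. The closure statement then also follows by duality.

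\textbf{Main point to check.} The only subtle step is that the intersection defining the closure is over a nonempty family; otherwise the intersection convention would have to be justified separately. Proposition~\ref{prop:empty-X-Hstar} settles this.
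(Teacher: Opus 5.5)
Your proof is correct and essentially matches the paper's. For the closure part it is the same argument. For the interior, the paper argues directly that the defining union is $H^{*}$-open because $H^{*}O(\mathbb{X})$ is closed under arbitrary unions (Theorem~\ref{thm:Hstar-strong-generalized-topology}), which is your alternative route; your main route through Proposition~\ref{prop:Hstar-duality} is just a De Morgan repackaging of the same fact, and your nonemptiness remark is a harmless extra precaution.
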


\begin{proof}
Since $H^{*}C(\mathbb{X})$ is closed under arbitrary intersections,
the intersection defining
$H^{*}\operatorname{Cl}(\mathbb{A})$ is $H^{*}$-closed.

Moreover, by
Theorem~\ref{thm:Hstar-strong-generalized-topology},
$H^{*}O(\mathbb{X})$ is closed under arbitrary unions. Hence the
union defining $H^{*}\operatorname{Int}(\mathbb{A})$ is
$H^{*}$-open.
\end{proof}

\begin{corollary}
\label{cor:Hstar-fixed-points}
Under the hypothesis of
Theorem~\ref{thm:Hstar-strong-generalized-topology}, for every
$\mathbb{A}\subseteq\mathbb{X}$:
\begin{enumerate}[label=(\roman*)]
    \item $\mathbb{A}$ is $H^{*}$-closed if and only if
    \[
    H^{*}\operatorname{Cl}(\mathbb{A})=\mathbb{A};
    \]

    \item $\mathbb{A}$ is $H^{*}$-open if and only if
    \[
    H^{*}\operatorname{Int}(\mathbb{A})=\mathbb{A}.
    \]
\end{enumerate}
\end{corollary}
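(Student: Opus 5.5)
The plan is to prove both equivalences directly from Corollary~\ref{cor:Hstar-closure-closed} and Proposition~\ref{prop:Hstar-basic-operators}, and then to check that (ii) is the complement-dual of (i) via Proposition~\ref{prop:Hstar-duality}.

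For (i), suppose first that $\mathbb{A}$ is $H^{*}$-closed. Then $\mathbb{A}$ itself belongs to the family $\{\mathbb{F}\in H^{*}C(\mathbb{X}):\mathbb{A}\subseteq\mathbb{F}\}$ whose intersection defines $H^{*}\operatorname{Cl}(\mathbb{A})$ in \eqref{eq:Hstar-closure}. Hence $H^{*}\operatorname{Cl}(\mathbb{A})\subseteq\mathbb{A}$. Together with Proposition~\ref{prop:Hstar-basic-operators}(i) this gives equality. This direction does not even need the intersection hypothesis. Conversely, if $H^{*}\operatorname{Cl}(\mathbb{A})=\mathbb{A}$, then Corollary~\ref{cor:Hstar-closure-closed} says $H^{*}\operatorname{Cl}(\mathbb{A})\in H^{*}C(\mathbb{X})$, so $\mathbb{A}$ is $H^{*}$-closed.

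For (ii), the argument is the same. If $\mathbb{A}$ is $H^{*}$-open, then $\mathbb{A}$ is a member of the union in \eqref{eq:Hstar-interior}, so $\mathbb{A}\subseteq H^{*}\operatorname{Int}(\mathbb{A})$. Proposition~\ref{prop:Hstar-basic-operators}(iii) then gives equality. Conversely, if $H^{*}\operatorname{Int}(\mathbb{A})=\mathbb{A}$, then Corollary~\ref{cor:Hstar-closure-closed} shows that $\mathbb{A}$ is $H^{*}$-open. Alternatively, (ii) follows from (i) applied to $\mathbb{X}\setminus\mathbb{A}$ together with \eqref{eq:Hstar-int-cl-duality}.

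The only substantive point is the reverse implication in each part. It depends on the closure and interior actually lying in $H^{*}C(\mathbb{X})$ and $H^{*}O(\mathbb{X})$ respectively, which is exactly where the arbitrary-intersection hypothesis enters through Corollary~\ref{cor:Hstar-closure-closed}. Everything else is immediate from the definitions.
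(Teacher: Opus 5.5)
Your proof is correct and follows the paper's argument. In each part, the forward direction holds because $\mathbb{A}$ itself occurs in the defining intersection (resp.\ union), combined with Proposition~\ref{prop:Hstar-basic-operators}. The converse comes from Corollary~\ref{cor:Hstar-closure-closed}. The paper only says that (ii) is dual, and you write that dual argument out explicitly, which is fine.
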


\begin{proof}
Suppose first that $\mathbb{A}$ is $H^{*}$-closed. Since
$\mathbb{A}$ itself occurs among the $H^{*}$-closed supersets in the
definition of $H^{*}\operatorname{Cl}(\mathbb{A})$,
\[
H^{*}\operatorname{Cl}(\mathbb{A})\subseteq\mathbb{A}.
\]
By Proposition~\ref{prop:Hstar-basic-operators},
\[
\mathbb{A}\subseteq
H^{*}\operatorname{Cl}(\mathbb{A}),
\]
and therefore
\[
H^{*}\operatorname{Cl}(\mathbb{A})=\mathbb{A}.
\]

Conversely, if
\[
H^{*}\operatorname{Cl}(\mathbb{A})=\mathbb{A},
\]
then Corollary~\ref{cor:Hstar-closure-closed} implies that
$\mathbb{A}$ is $H^{*}$-closed.

The proof of (ii) is dual.
\end{proof}

\subsection{When $\Gamma_{H^{*}}$ Becomes an Ordinary Topology}
\label{subsec:Hstar-ordinary-topology}

We now determine when the strong generalized topology
$\Gamma_{H^{*}}=H^{*}O(\mathbb{X})$ becomes an ordinary topology.

\begin{theorem}
\label{thm:Hstar-topology-characterization}
Let $(\mathbb{X},\tau)$ be a topological space such that
$H^{*}C(\mathbb{X})$ is closed under arbitrary intersections.
Then the following statements are equivalent:
\begin{enumerate}[label=(\roman*)]

    \item $\Gamma_{H^{*}}=H^{*}O(\mathbb{X})$ is an ordinary
    topology on $\mathbb{X}$;

    \item $H^{*}O(\mathbb{X})$ is closed under finite intersections;

    \item $H^{*}C(\mathbb{X})$ is closed under finite unions.

\end{enumerate}
\end{theorem}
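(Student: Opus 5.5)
The plan is to prove the cycle of implications (i)$\Rightarrow$(ii)$\Rightarrow$(iii)$\Rightarrow$(i), or more economically (i)$\Leftrightarrow$(ii) and (ii)$\Leftrightarrow$(iii). Throughout we keep in force the standing hypothesis that $H^{*}C(\mathbb{X})$ is closed under arbitrary intersections. By Theorem~\ref{thm:Hstar-strong-generalized-topology}, this makes $\Gamma_{H^{*}}=H^{*}O(\mathbb{X})$ a strong generalized topology. Concretely, it contains $\varnothing$ and $\mathbb{X}$ (Proposition~\ref{prop:empty-X-Hstar}) and is closed under arbitrary unions. The only axiom of an ordinary topology not yet available is closure under finite intersections.

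For (i)$\Rightarrow$(ii) there is nothing to do: an ordinary topology is by definition closed under finite intersections. For (ii)$\Rightarrow$(i), we combine (ii) with the three properties already supplied by Theorem~\ref{thm:Hstar-strong-generalized-topology}. Together these are exactly the axioms of a topology, so $\Gamma_{H^{*}}$ is an ordinary topology.

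For (ii)$\Leftrightarrow$(iii) we argue by complementation, using only that $\mathbb{U}\in H^{*}O(\mathbb{X})$ if and only if $\mathbb{X}\setminus\mathbb{U}\in H^{*}C(\mathbb{X})$, together with De Morgan's law. Given $\mathbb{F}_1,\dots,\mathbb{F}_n\in H^{*}C(\mathbb{X})$, put $\mathbb{U}_k=\mathbb{X}\setminus\mathbb{F}_k\in H^{*}O(\mathbb{X})$. Then
\[
\mathbb{X}\setminus\bigcup_{k=1}^{n}\mathbb{F}_k=\bigcap_{k=1}^{n}\mathbb{U}_k ,
\]
so under (ii) the right side is $H^{*}$-open and hence $\bigcup_k\mathbb{F}_k$ is $H^{*}$-closed. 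The converse is the same computation read backwards, starting from $H^{*}$-open sets $\mathbb{U}_k$ and applying (iii) to their complements.

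No step is a genuine obstacle; the argument is purely formal. The only point needing care is that in (ii)$\Rightarrow$(i) the arbitrary-union axiom comes from the standing hypothesis via Theorem~\ref{thm:Hstar-strong-generalized-topology}, not from (ii). One should also make explicit the convention on the empty family: the empty intersection is $\mathbb{X}$ and the empty union is $\varnothing$, and both are covered by Proposition~\ref{prop:empty-X-Hstar}. It then suffices to verify the binary case $n=2$ and proceed by induction.
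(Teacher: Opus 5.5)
Your proposal is correct and follows the paper's proof essentially verbatim. You get (i)$\Leftrightarrow$(ii) because Theorem~\ref{thm:Hstar-strong-generalized-topology} already supplies $\varnothing$, $\mathbb{X}$ and arbitrary unions, and (ii)$\Leftrightarrow$(iii) by complementation and De Morgan's law. The paper treats only the binary case, so your remarks on the empty family and induction are extra care.
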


\begin{proof}
By Theorem~\ref{thm:Hstar-strong-generalized-topology},
$\Gamma_{H^{*}}=H^{*}O(\mathbb{X})$ contains
$\varnothing$ and $\mathbb{X}$ and is closed under arbitrary unions.
Therefore, it is an ordinary topology precisely when it is also
closed under finite intersections. Thus (i) and (ii) are equivalent.

Assume (ii), and let
$\mathbb{F}_{1},\mathbb{F}_{2}\in H^{*}C(\mathbb{X})$. Then
\[
\mathbb{X}\setminus\mathbb{F}_{1},
\quad
\mathbb{X}\setminus\mathbb{F}_{2}
\in H^{*}O(\mathbb{X}).
\]
By (ii),
\[
(\mathbb{X}\setminus\mathbb{F}_{1})
\cap
(\mathbb{X}\setminus\mathbb{F}_{2})
\in H^{*}O(\mathbb{X}).
\]
Since
\[
(\mathbb{X}\setminus\mathbb{F}_{1})
\cap
(\mathbb{X}\setminus\mathbb{F}_{2})
=
\mathbb{X}\setminus
(\mathbb{F}_{1}\cup\mathbb{F}_{2}),
\]
we obtain
\[
\mathbb{F}_{1}\cup\mathbb{F}_{2}
\in H^{*}C(\mathbb{X}).
\]
Hence (iii) holds.

Conversely, assume (iii), and let
$\mathbb{U}_{1},\mathbb{U}_{2}\in H^{*}O(\mathbb{X})$. Then
\[
\mathbb{X}\setminus\mathbb{U}_{1},
\quad
\mathbb{X}\setminus\mathbb{U}_{2}
\in H^{*}C(\mathbb{X}).
\]
By (iii),
\[
(\mathbb{X}\setminus\mathbb{U}_{1})
\cup
(\mathbb{X}\setminus\mathbb{U}_{2})
\in H^{*}C(\mathbb{X}).
\]
Using De Morgan's law,
\[
(\mathbb{X}\setminus\mathbb{U}_{1})
\cup
(\mathbb{X}\setminus\mathbb{U}_{2})
=
\mathbb{X}\setminus
(\mathbb{U}_{1}\cap\mathbb{U}_{2}).
\]
Therefore,
\[
\mathbb{U}_{1}\cap\mathbb{U}_{2}
\in H^{*}O(\mathbb{X}),
\]
and (ii) follows.
\end{proof}

\begin{definition}
\label{def:associated-Hstar-topology}
Whenever the equivalent conditions of
Theorem~\ref{thm:Hstar-topology-characterization} are satisfied, we
write
\[
\tau_{H^{*}}=H^{*}O(\mathbb{X})
\]
and call $\tau_{H^{*}}$ the \emph{topology induced by the
$H^{*}$-open sets}. The space
\[
(\mathbb{X},\tau_{H^{*}})
\]
will be referred to as the \emph{associated $H^{*}$-topological
space}.
\end{definition}

\begin{remark}
\label{rem:Gamma-vs-tau}
The symbols $\Gamma_{H^{*}}$ and $\tau_{H^{*}}$ have distinct
meanings. The notation
\[
\Gamma_{H^{*}}=H^{*}O(\mathbb{X})
\]
is used when the hypotheses establishing a strong generalized
topology are satisfied, whereas
\[
\tau_{H^{*}}=H^{*}O(\mathbb{X})
\]
is used only when the additional finite-intersection condition of
Theorem~\ref{thm:Hstar-topology-characterization} is satisfied.
Thus, the existence of $\Gamma_{H^{*}}$ does not by itself imply the
existence of the associated ordinary topology $\tau_{H^{*}}$.
\end{remark}

\subsection{The Family of $H^{*}$-Open Sets Need Not Form an Ordinary Topology}
\label{subsec:Hstar-counterexample}

The finite-intersection condition in
Theorem~\ref{thm:Hstar-topology-characterization} is essential for
the existence of the associated ordinary topology. The following
example illustrates that $H^{*}$-open sets need not be closed under
finite intersections.

\begin{example}
\label{ex:Hstar-not-topology}
Let
\[
\mathbb{X}=\{k,l,m,n\}
\]
and consider the topology
\[
\tau=
\{
\varnothing,\mathbb{X},
\{k\},\{l\},\{k,l\},\{k,m\},\{k,l,m\}
\}.
\]
For this space, the family of $H^{*}$-closed sets is
\[
\begin{aligned}
H^{*}C(\mathbb{X})
=
\{&
\varnothing,\mathbb{X},
\{k\},\{l\},\{m\},\{n\},
\{k,m\},\{k,n\},\\
&
\{l,m\},\{l,n\},\{m,n\},
\{k,l,n\},\{k,m,n\},\{l,m,n\}
\}.
\end{aligned}
\]

In particular,
\[
\{k\},\{l\}\in H^{*}C(\mathbb{X}),
\]
whereas
\[
\{k,l\}\notin H^{*}C(\mathbb{X}).
\]
Thus $H^{*}C(\mathbb{X})$ is not closed under finite unions.

Equivalently,
\[
\mathbb{X}\setminus\{k\}=\{l,m,n\}
\]
and
\[
\mathbb{X}\setminus\{l\}=\{k,m,n\}
\]
belong to $H^{*}O(\mathbb{X})$. However,
\[
\{l,m,n\}\cap\{k,m,n\}
=
\{m,n\},
\]
while
\[
\mathbb{X}\setminus\{m,n\}=\{k,l\}
\notin H^{*}C(\mathbb{X}).
\]
Therefore,
\[
\{m,n\}\notin H^{*}O(\mathbb{X}).
\]

Hence $H^{*}O(\mathbb{X})$ is not closed under finite intersections
and therefore does not form an ordinary topology on $\mathbb{X}$.
\end{example}

\begin{remark}
\label{rem:need-generalized-topology}
Example~\ref{ex:Hstar-not-topology} shows that the family of
$H^{*}$-open sets cannot, in general, be treated as an ordinary
topology. Therefore, the generalized-topological structure
$\Gamma_{H^{*}}$ is considered only under the hypothesis of
Theorem~\ref{thm:Hstar-strong-generalized-topology}, while the
ordinary topology $\tau_{H^{*}}$ is introduced only when the
additional equivalent conditions of
Theorem~\ref{thm:Hstar-topology-characterization} are satisfied.
\end{remark}

\section{Properties of $H^{*}$-Closure and $H^{*}$-Interior}
\label{sec:operators}

Throughout this section, $(\mathbb{X},\tau)$ denotes a topological
space such that $H^{*}C(\mathbb{X})$ is closed under arbitrary
intersections. Consequently, by
Theorem~\ref{thm:Hstar-strong-generalized-topology},
\[
\Gamma_{H^{*}}=H^{*}O(\mathbb{X})
\]
is a strong generalized topology on $\mathbb{X}$.

Moreover, by Theorem~\ref{thm:Gamma-Hstar-operators}, the operators
$H^{*}\operatorname{Cl}$ and $H^{*}\operatorname{Int}$ coincide
with the closure and interior operators associated with
$\Gamma_{H^{*}}$. We now record their principal structural
properties.

\subsection{Basic Operator Properties}

\begin{theorem}
\label{thm:Hstar-closure-properties}
For all $\mathbb{A},\mathbb{B}\subseteq\mathbb{X}$, the
$H^{*}$-closure operator satisfies:
\begin{enumerate}[label=(\roman*)]

    \item
    \[
    H^{*}\operatorname{Cl}(\varnothing)=\varnothing;
    \]

    \item
    \[
    H^{*}\operatorname{Cl}(\mathbb{X})=\mathbb{X};
    \]

    \item
    \[
    \mathbb{A}
    \subseteq
    H^{*}\operatorname{Cl}(\mathbb{A});
    \]

    \item if $\mathbb{A}\subseteq\mathbb{B}$, then
    \[
    H^{*}\operatorname{Cl}(\mathbb{A})
    \subseteq
    H^{*}\operatorname{Cl}(\mathbb{B});
    \]

    \item
    \[
    H^{*}\operatorname{Cl}
    \bigl(H^{*}\operatorname{Cl}(\mathbb{A})\bigr)
    =
    H^{*}\operatorname{Cl}(\mathbb{A}).
    \]

\end{enumerate}
\end{theorem}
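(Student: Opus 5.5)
The plan is to derive each item from the results already established for the families $H^{*}C(\mathbb{X})$ and $H^{*}O(\mathbb{X})$, using the standing hypothesis of this section that $H^{*}C(\mathbb{X})$ is closed under arbitrary intersections.

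For (i) and (ii), Proposition~\ref{prop:empty-X-Hstar} gives $\varnothing,\mathbb{X}\in H^{*}C(\mathbb{X})$. Corollary~\ref{cor:Hstar-fixed-points}(i) then gives $H^{*}\operatorname{Cl}(\varnothing)=\varnothing$ and $H^{*}\operatorname{Cl}(\mathbb{X})=\mathbb{X}$. For (i) one can argue more directly: $\varnothing$ is itself an $H^{*}$-closed superset of $\varnothing$, so the intersection in \eqref{eq:Hstar-closure} is contained in $\varnothing$. For (ii), the only superset of $\mathbb{X}$ is $\mathbb{X}$, and it is $H^{*}$-closed.

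Items (iii) and (iv) are exactly Proposition~\ref{prop:Hstar-basic-operators}(i) and (ii). They require no hypothesis beyond the definition, so I will cite them directly.

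For (v), I first apply Corollary~\ref{cor:Hstar-closure-closed}, which says that $H^{*}\operatorname{Cl}(\mathbb{A})\in H^{*}C(\mathbb{X})$. Applying Corollary~\ref{cor:Hstar-fixed-points}(i) to the set $H^{*}\operatorname{Cl}(\mathbb{A})$ then yields
\[
H^{*}\operatorname{Cl}\bigl(H^{*}\operatorname{Cl}(\mathbb{A})\bigr)=H^{*}\operatorname{Cl}(\mathbb{A}).
\]

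The only step that uses the standing hypothesis is (v). Without closure of $H^{*}C(\mathbb{X})$ under arbitrary intersections, $H^{*}\operatorname{Cl}(\mathbb{A})$ need not be $H^{*}$-closed, and idempotence could fail. Once Corollary~\ref{cor:Hstar-closure-closed} is available this step is routine, so I do not expect a real obstacle. The main care needed is to cite the hypothesis explicitly at that point.
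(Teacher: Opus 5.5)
Your proposal is correct and follows essentially the same route as the paper: (i) and (ii) from $\varnothing,\mathbb{X}\in H^{*}C(\mathbb{X})$, (iii) and (iv) from Proposition~\ref{prop:Hstar-basic-operators}, and (v) by combining Corollary~\ref{cor:Hstar-closure-closed} with Corollary~\ref{cor:Hstar-fixed-points}. Your remark that only (v) genuinely needs the arbitrary-intersection hypothesis is accurate, since the direction of Corollary~\ref{cor:Hstar-fixed-points}(i) you invoke for (i) and (ii) holds without it.
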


\begin{proof}
Since $\varnothing$ is $H^{*}$-closed,
\[
H^{*}\operatorname{Cl}(\varnothing)
\subseteq\varnothing,
\]
and hence
\[
H^{*}\operatorname{Cl}(\varnothing)=\varnothing.
\]

Since $\mathbb{X}$ is $H^{*}$-closed and is the only subset of
$\mathbb{X}$ containing $\mathbb{X}$, we obtain
\[
H^{*}\operatorname{Cl}(\mathbb{X})=\mathbb{X}.
\]

The extensivity and monotonicity assertions in (iii) and (iv)
follow from Proposition~\ref{prop:Hstar-basic-operators}.

Finally, by
Corollary~\ref{cor:Hstar-closure-closed},
$H^{*}\operatorname{Cl}(\mathbb{A})$ is itself $H^{*}$-closed.
Applying
Corollary~\ref{cor:Hstar-fixed-points} therefore gives
\[
H^{*}\operatorname{Cl}
\bigl(H^{*}\operatorname{Cl}(\mathbb{A})\bigr)
=
H^{*}\operatorname{Cl}(\mathbb{A}).
\]
\end{proof}

\begin{theorem}
\label{thm:Hstar-interior-properties}
For all $\mathbb{A},\mathbb{B}\subseteq\mathbb{X}$, the
$H^{*}$-interior operator satisfies:
\begin{enumerate}[label=(\roman*)]

    \item
    \[
    H^{*}\operatorname{Int}(\varnothing)=\varnothing;
    \]

    \item
    \[
    H^{*}\operatorname{Int}(\mathbb{X})=\mathbb{X};
    \]

    \item
    \[
    H^{*}\operatorname{Int}(\mathbb{A})
    \subseteq\mathbb{A};
    \]

    \item if $\mathbb{A}\subseteq\mathbb{B}$, then
    \[
    H^{*}\operatorname{Int}(\mathbb{A})
    \subseteq
    H^{*}\operatorname{Int}(\mathbb{B});
    \]

    \item
    \[
    H^{*}\operatorname{Int}
    \bigl(H^{*}\operatorname{Int}(\mathbb{A})\bigr)
    =
    H^{*}\operatorname{Int}(\mathbb{A}).
    \]

\end{enumerate}
\end{theorem}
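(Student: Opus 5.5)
The argument mirrors the proof of Theorem~\ref{thm:Hstar-closure-properties}, with the roles of open and closed sets exchanged. I would argue items (i), (ii) and (v) directly from the definition of $H^{*}\operatorname{Int}$ together with the results of Section~\ref{sec:Hstar-generalized-topology}, and take items (iii) and (iv) straight from Proposition~\ref{prop:Hstar-basic-operators}. As a cross-check, every item could also be derived from the closure version via the duality
\[
H^{*}\operatorname{Int}(\mathbb{A})=\mathbb{X}\setminus H^{*}\operatorname{Cl}(\mathbb{X}\setminus\mathbb{A})
\]
of Proposition~\ref{prop:Hstar-duality}.

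For (i), the only subset of $\varnothing$ is $\varnothing$ itself. It is $H^{*}$-open by Proposition~\ref{prop:empty-X-Hstar}, and the empty union is also $\varnothing$. Either way the defining union \eqref{eq:Hstar-interior} equals $\varnothing$.

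For (ii), $\mathbb{X}\in H^{*}O(\mathbb{X})$ by Proposition~\ref{prop:empty-X-Hstar}, and $\mathbb{X}\subseteq\mathbb{X}$. So $\mathbb{X}$ occurs in the defining union, which gives $\mathbb{X}\subseteq H^{*}\operatorname{Int}(\mathbb{X})$. The reverse inclusion is (iii) applied to $\mathbb{A}=\mathbb{X}$.

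Items (iii) and (iv) are exactly parts (iii) and (iv) of Proposition~\ref{prop:Hstar-basic-operators}, so nothing new is needed there.

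For (v), the standing hypothesis of this section is that $H^{*}C(\mathbb{X})$ is closed under arbitrary intersections. Under it, Corollary~\ref{cor:Hstar-closure-closed} shows that $H^{*}\operatorname{Int}(\mathbb{A})$ is itself $H^{*}$-open. Corollary~\ref{cor:Hstar-fixed-points}(ii), applied to the set $H^{*}\operatorname{Int}(\mathbb{A})$, then gives
\[
H^{*}\operatorname{Int}\bigl(H^{*}\operatorname{Int}(\mathbb{A})\bigr)=H^{*}\operatorname{Int}(\mathbb{A}).
\]

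The only point requiring any care is (v). It genuinely depends on the arbitrary-intersection hypothesis, through Theorem~\ref{thm:Hstar-strong-generalized-topology}, because without that hypothesis a union of $H^{*}$-open sets need not be $H^{*}$-open. Also, Corollary~\ref{cor:Hstar-fixed-points}(ii) was only asserted to be dual to part (i). So I would spell out its use here directly. The inclusion $\subseteq$ is (iii). The inclusion $\supseteq$ holds because the $H^{*}$-open set $H^{*}\operatorname{Int}(\mathbb{A})$ appears as a member of its own defining union.
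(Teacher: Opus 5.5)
Your proposal is correct and follows essentially the paper's own argument: (i)--(ii) from $\varnothing,\mathbb{X}\in H^{*}O(\mathbb{X})$, (iii)--(iv) from Proposition~\ref{prop:Hstar-basic-operators}, and (v) from the $H^{*}$-openness of $H^{*}\operatorname{Int}(\mathbb{A})$ (closure of $H^{*}O(\mathbb{X})$ under arbitrary unions) combined with Corollary~\ref{cor:Hstar-fixed-points}. One small correction to your closing remark: (v) does not actually depend on the standing hypothesis, because every $H^{*}$-open $\mathbb{U}\subseteq\mathbb{A}$ satisfies $\mathbb{U}\subseteq H^{*}\operatorname{Int}(\mathbb{A})$ and therefore already appears in the union defining $H^{*}\operatorname{Int}\bigl(H^{*}\operatorname{Int}(\mathbb{A})\bigr)$, which together with (iii) gives equality for any family of sets.
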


\begin{proof}
Since
\[
\varnothing,\mathbb{X}\in H^{*}O(\mathbb{X}),
\]
the first two assertions follow directly from the definition of
$H^{*}$-interior.

Assertions (iii) and (iv) follow from
Proposition~\ref{prop:Hstar-basic-operators}.

By Theorem~\ref{thm:Hstar-strong-generalized-topology},
$H^{*}O(\mathbb{X})$ is closed under arbitrary unions. Hence
$H^{*}\operatorname{Int}(\mathbb{A})$ is itself $H^{*}$-open.
Corollary~\ref{cor:Hstar-fixed-points} therefore yields
\[
H^{*}\operatorname{Int}
\bigl(H^{*}\operatorname{Int}(\mathbb{A})\bigr)
=
H^{*}\operatorname{Int}(\mathbb{A}).
\]
\end{proof}

The complement duality between the two operators has already been
established in Proposition~\ref{prop:Hstar-duality}; namely,
\begin{equation}
H^{*}\operatorname{Cl}(\mathbb{A})
=
\mathbb{X}\setminus
H^{*}\operatorname{Int}
(\mathbb{X}\setminus\mathbb{A}),
\label{eq:Hstar-closure-duality}
\end{equation}
and
\begin{equation}
H^{*}\operatorname{Int}(\mathbb{A})
=
\mathbb{X}\setminus
H^{*}\operatorname{Cl}
(\mathbb{X}\setminus\mathbb{A}).
\label{eq:Hstar-interior-duality}
\end{equation}

\subsection{Finite Unions and Intersections}

The following inclusions hold at the level of the
$H^{*}$-generated generalized topology without assuming that
$H^{*}O(\mathbb{X})$ is closed under finite intersections.

\begin{proposition}
\label{prop:Hstar-union-intersection}
For all $\mathbb{A},\mathbb{B}\subseteq\mathbb{X}$,
\begin{equation}
H^{*}\operatorname{Cl}(\mathbb{A})
\cup
H^{*}\operatorname{Cl}(\mathbb{B})
\subseteq
H^{*}\operatorname{Cl}(\mathbb{A}\cup\mathbb{B}),
\label{eq:Hstar-closure-union-inclusion}
\end{equation}
and
\begin{equation}
H^{*}\operatorname{Int}(\mathbb{A}\cap\mathbb{B})
\subseteq
H^{*}\operatorname{Int}(\mathbb{A})
\cap
H^{*}\operatorname{Int}(\mathbb{B}).
\label{eq:Hstar-interior-intersection-inclusion}
\end{equation}
\end{proposition}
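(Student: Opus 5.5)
Both inclusions follow from monotonicity alone, as recorded in Proposition~\ref{prop:Hstar-basic-operators} (ii) and (iv); no finite-intersection or finite-union hypothesis on $H^{*}O(\mathbb{X})$ or $H^{*}C(\mathbb{X})$ is needed. I will take the closure inclusion first and then obtain the interior inclusion either directly or by duality.

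For \eqref{eq:Hstar-closure-union-inclusion}, observe that $\mathbb{A}\subseteq\mathbb{A}\cup\mathbb{B}$ and $\mathbb{B}\subseteq\mathbb{A}\cup\mathbb{B}$. Proposition~\ref{prop:Hstar-basic-operators}(ii) then gives
\[
H^{*}\operatorname{Cl}(\mathbb{A})\subseteq H^{*}\operatorname{Cl}(\mathbb{A}\cup\mathbb{B})
\quad\text{and}\quad
H^{*}\operatorname{Cl}(\mathbb{B})\subseteq H^{*}\operatorname{Cl}(\mathbb{A}\cup\mathbb{B}),
\]
and taking the union of the left-hand sides yields the claim.

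For \eqref{eq:Hstar-interior-intersection-inclusion}, the same argument applies using $\mathbb{A}\cap\mathbb{B}\subseteq\mathbb{A}$, $\mathbb{A}\cap\mathbb{B}\subseteq\mathbb{B}$ and Proposition~\ref{prop:Hstar-basic-operators}(iv), then intersecting the two resulting inclusions. Alternatively, one can apply \eqref{eq:Hstar-interior-duality} together with De Morgan's laws to deduce it from \eqref{eq:Hstar-closure-union-inclusion} with $\mathbb{X}\setminus\mathbb{A}$ and $\mathbb{X}\setminus\mathbb{B}$ in place of $\mathbb{A}$ and $\mathbb{B}$; I would mention this only as a consistency check.

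There is no genuine obstacle. The one point worth stating explicitly is that the reverse inclusions would require $H^{*}C(\mathbb{X})$ to be closed under finite unions, which is exactly what Example~\ref{ex:Hstar-not-topology} shows can fail. This is why only one direction is claimed.
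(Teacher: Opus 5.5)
Your proof is correct and is essentially the paper's own argument: both inclusions follow from the monotonicity of $H^{*}\operatorname{Cl}$ and $H^{*}\operatorname{Int}$ (Proposition~\ref{prop:Hstar-basic-operators}(ii),(iv)). The paper applies it to $\mathbb{A},\mathbb{B}\subseteq\mathbb{A}\cup\mathbb{B}$ and $\mathbb{A}\cap\mathbb{B}\subseteq\mathbb{A},\mathbb{B}$, then takes the union, respectively the intersection, of the resulting inclusions, without using any finite-union or finite-intersection hypothesis.
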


\begin{proof}
Since
\[
\mathbb{A}\subseteq\mathbb{A}\cup\mathbb{B}
\quad\text{and}\quad
\mathbb{B}\subseteq\mathbb{A}\cup\mathbb{B},
\]
monotonicity of $H^{*}\operatorname{Cl}$ gives
\[
H^{*}\operatorname{Cl}(\mathbb{A})
\subseteq
H^{*}\operatorname{Cl}(\mathbb{A}\cup\mathbb{B})
\]
and
\[
H^{*}\operatorname{Cl}(\mathbb{B})
\subseteq
H^{*}\operatorname{Cl}(\mathbb{A}\cup\mathbb{B}).
\]
Taking the union of these inclusions yields
\eqref{eq:Hstar-closure-union-inclusion}.

Similarly,
\[
\mathbb{A}\cap\mathbb{B}\subseteq\mathbb{A}
\quad\text{and}\quad
\mathbb{A}\cap\mathbb{B}\subseteq\mathbb{B}.
\]
Monotonicity of $H^{*}\operatorname{Int}$ therefore gives
\[
H^{*}\operatorname{Int}(\mathbb{A}\cap\mathbb{B})
\subseteq
H^{*}\operatorname{Int}(\mathbb{A})
\]
and
\[
H^{*}\operatorname{Int}(\mathbb{A}\cap\mathbb{B})
\subseteq
H^{*}\operatorname{Int}(\mathbb{B}).
\]
Hence
\[
H^{*}\operatorname{Int}(\mathbb{A}\cap\mathbb{B})
\subseteq
H^{*}\operatorname{Int}(\mathbb{A})
\cap
H^{*}\operatorname{Int}(\mathbb{B}).
\]
\end{proof}

\begin{remark}
\label{rem:Hstar-no-equality-general}
At the level of a strong generalized topology, the inclusions in
Proposition~\ref{prop:Hstar-union-intersection} need not become
equalities. The reverse inclusions require additional finite
intersection or, equivalently, finite union structure. Thus these
equalities arise naturally when the associated ordinary topology
$\tau_{H^{*}}$ exists.
\end{remark}

\subsection{The Ordinary Topological Case}

We now impose the additional condition that the $H^{*}$-generated
generalized topology is an ordinary topology.

\begin{theorem}[Finite Union and Intersection Laws]
\label{thm:Hstar-Kuratowski}
Suppose that the equivalent conditions of
Theorem~\ref{thm:Hstar-topology-characterization} hold. Then, for all
$\mathbb{A},\mathbb{B}\subseteq\mathbb{X}$,
\begin{equation}
H^{*}\operatorname{Cl}(\mathbb{A}\cup\mathbb{B})
=
H^{*}\operatorname{Cl}(\mathbb{A})
\cup
H^{*}\operatorname{Cl}(\mathbb{B}),
\label{eq:Hstar-closure-finite-union}
\end{equation}
and
\begin{equation}
H^{*}\operatorname{Int}(\mathbb{A}\cap\mathbb{B})
=
H^{*}\operatorname{Int}(\mathbb{A})
\cap
H^{*}\operatorname{Int}(\mathbb{B}).
\label{eq:Hstar-interior-finite-intersection}
\end{equation}
\end{theorem}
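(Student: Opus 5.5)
The plan is to prove the closure identity \eqref{eq:Hstar-closure-finite-union} directly and then obtain \eqref{eq:Hstar-interior-finite-intersection} from it by complementation. One inclusion of the closure identity is already available. Proposition~\ref{prop:Hstar-union-intersection} gives
\[
H^{*}\operatorname{Cl}(\mathbb{A})\cup H^{*}\operatorname{Cl}(\mathbb{B})\subseteq H^{*}\operatorname{Cl}(\mathbb{A}\cup\mathbb{B}),
\]
and that proposition uses only monotonicity. It therefore remains to prove the reverse inclusion.

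For the reverse inclusion, I will use the standing hypothesis that $H^{*}C(\mathbb{X})$ is closed under arbitrary intersections. Corollary~\ref{cor:Hstar-closure-closed} then says that $H^{*}\operatorname{Cl}(\mathbb{A})$ and $H^{*}\operatorname{Cl}(\mathbb{B})$ are both $H^{*}$-closed. By condition (iii) of Theorem~\ref{thm:Hstar-topology-characterization}, $H^{*}C(\mathbb{X})$ is closed under finite unions, so
\[
\mathbb{F}=H^{*}\operatorname{Cl}(\mathbb{A})\cup H^{*}\operatorname{Cl}(\mathbb{B})
\]
is $H^{*}$-closed. By extensivity (Proposition~\ref{prop:Hstar-basic-operators}(i)), $\mathbb{F}$ contains $\mathbb{A}\cup\mathbb{B}$. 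Hence $\mathbb{F}$ is one of the sets in the intersection defining $H^{*}\operatorname{Cl}(\mathbb{A}\cup\mathbb{B})$ in \eqref{eq:Hstar-closure}, and so $H^{*}\operatorname{Cl}(\mathbb{A}\cup\mathbb{B})\subseteq\mathbb{F}$. This gives the closure identity.

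For the interior identity, I will apply the duality of Proposition~\ref{prop:Hstar-duality} together with De Morgan's law:
\[
H^{*}\operatorname{Int}(\mathbb{A}\cap\mathbb{B})
=\mathbb{X}\setminus H^{*}\operatorname{Cl}\bigl((\mathbb{X}\setminus\mathbb{A})\cup(\mathbb{X}\setminus\mathbb{B})\bigr).
\]
Next I apply the closure identity just proved to the sets $\mathbb{X}\setminus\mathbb{A}$ and $\mathbb{X}\setminus\mathbb{B}$, which splits the closure into $H^{*}\operatorname{Cl}(\mathbb{X}\setminus\mathbb{A})\cup H^{*}\operatorname{Cl}(\mathbb{X}\setminus\mathbb{B})$. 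Applying De Morgan's law and the duality once more yields $H^{*}\operatorname{Int}(\mathbb{A})\cap H^{*}\operatorname{Int}(\mathbb{B})$.

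Alternatively, the interior identity can be proved directly. $H^{*}\operatorname{Int}(\mathbb{A})$ and $H^{*}\operatorname{Int}(\mathbb{B})$ are $H^{*}$-open by Corollary~\ref{cor:Hstar-closure-closed}, so their intersection is $H^{*}$-open by condition (ii) of Theorem~\ref{thm:Hstar-topology-characterization}. That intersection is contained in $\mathbb{A}\cap\mathbb{B}$, so it is one of the sets in the union defining $H^{*}\operatorname{Int}(\mathbb{A}\cap\mathbb{B})$.

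No step here is a genuine obstacle. The only point needing care is the reverse inclusion for closure, which relies on both hypotheses at once. Closure under arbitrary intersections is what makes $H^{*}\operatorname{Cl}(\mathbb{A})$ itself $H^{*}$-closed. Closure under finite unions is what makes the union of the two closures $H^{*}$-closed. I will state explicitly where each hypothesis enters.
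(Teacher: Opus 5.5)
Your proposal is correct and matches the paper's proof. The reverse inclusion comes from observing that $H^{*}\operatorname{Cl}(\mathbb{A})\cup H^{*}\operatorname{Cl}(\mathbb{B})$ is an $H^{*}$-closed superset of $\mathbb{A}\cup\mathbb{B}$, the other inclusion from Proposition~\ref{prop:Hstar-union-intersection}, and the interior law from duality with De Morgan's law (or the dual direct argument). You also improve slightly on the paper by citing Corollary~\ref{cor:Hstar-closure-closed} and stating where each hypothesis is used; the paper leaves both implicit.
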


\begin{proof}
Under the stated hypothesis,
\[
\tau_{H^{*}}=H^{*}O(\mathbb{X})
\]
is an ordinary topology. By
Theorem~\ref{thm:Hstar-topology-characterization},
$H^{*}C(\mathbb{X})$ is therefore closed under finite unions.

Since
$H^{*}\operatorname{Cl}(\mathbb{A})$ and
$H^{*}\operatorname{Cl}(\mathbb{B})$ are $H^{*}$-closed, their
union is $H^{*}$-closed and contains
$\mathbb{A}\cup\mathbb{B}$. By the definition of
$H^{*}$-closure,
\[
H^{*}\operatorname{Cl}(\mathbb{A}\cup\mathbb{B})
\subseteq
H^{*}\operatorname{Cl}(\mathbb{A})
\cup
H^{*}\operatorname{Cl}(\mathbb{B}).
\]
The reverse inclusion follows from
Proposition~\ref{prop:Hstar-union-intersection}. Hence
\[
H^{*}\operatorname{Cl}(\mathbb{A}\cup\mathbb{B})
=
H^{*}\operatorname{Cl}(\mathbb{A})
\cup
H^{*}\operatorname{Cl}(\mathbb{B}).
\]

The corresponding interior identity follows either by the dual
argument or from
\eqref{eq:Hstar-closure-duality} and De Morgan's law.
\end{proof}

The preceding theorem shows that, once the associated topology
exists, the $H^{*}$-closure and $H^{*}$-interior satisfy the
corresponding finite-set laws of ordinary topological closure and
interior. The following result makes this relationship precise.

\begin{corollary}
\label{cor:Hstar-associated-closure}
If
\[
\tau_{H^{*}}=H^{*}O(\mathbb{X})
\]
is an ordinary topology, then, for every
$\mathbb{A}\subseteq\mathbb{X}$,
\[
H^{*}\operatorname{Cl}(\mathbb{A})
=
\operatorname{Cl}_{\tau_{H^{*}}}(\mathbb{A})
\]
and
\[
H^{*}\operatorname{Int}(\mathbb{A})
=
\operatorname{Int}_{\tau_{H^{*}}}(\mathbb{A}).
\]
\end{corollary}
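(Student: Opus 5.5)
The plan is to unwind the definitions: the closed and open sets of the ordinary topology $\tau_{H^{*}}$ are exactly the $H^{*}$-closed and $H^{*}$-open sets. The ordinary closure and interior in $(\mathbb{X},\tau_{H^{*}})$ are given by the same intersection and union formulas as in Definition~\ref{def:Hstar-operators}, applied to the same families of sets.

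First I note the standing hypothesis. The statement assumes that $\tau_{H^{*}}=H^{*}O(\mathbb{X})$ is an ordinary topology. By the convention of this section, $H^{*}C(\mathbb{X})$ is closed under arbitrary intersections, and by Definition~\ref{def:associated-Hstar-topology} the conditions of Theorem~\ref{thm:Hstar-topology-characterization} hold.

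Then I identify the closed sets. A set $\mathbb{F}$ is $\tau_{H^{*}}$-closed if and only if $\mathbb{X}\setminus\mathbb{F}\in\tau_{H^{*}}=H^{*}O(\mathbb{X})$. By the definition of $H^{*}$-open sets, this holds if and only if $\mathbb{F}\in H^{*}C(\mathbb{X})$. This is the same argument as in Proposition~\ref{prop:Gamma-Hstar-closed}, with $\Gamma_{H^{*}}$ now equal to $\tau_{H^{*}}$ as a set family.

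Next I recall the standard description of the closure in an ordinary topological space: $\operatorname{Cl}_{\tau_{H^{*}}}(\mathbb{A})$ is the intersection of all $\tau_{H^{*}}$-closed supersets of $\mathbb{A}$. Substituting the identification of the closed sets gives
\[
\operatorname{Cl}_{\tau_{H^{*}}}(\mathbb{A})
=\bigcap\{\mathbb{F}\in H^{*}C(\mathbb{X}):\mathbb{A}\subseteq\mathbb{F}\}
=H^{*}\operatorname{Cl}(\mathbb{A}).
\]
Alternatively, this is just Theorem~\ref{thm:Gamma-Hstar-operators}, since $c_{\Gamma}$ for $\Gamma=\tau_{H^{*}}$ is the ordinary closure.

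For the interior, $\operatorname{Int}_{\tau_{H^{*}}}(\mathbb{A})$ is the union of all members of $\tau_{H^{*}}=H^{*}O(\mathbb{X})$ contained in $\mathbb{A}$. This union is exactly \eqref{eq:Hstar-interior}. One could also derive it from the closure identity via Proposition~\ref{prop:Hstar-duality} and the ordinary duality $\operatorname{Int}_{\tau}(\mathbb{A})=\mathbb{X}\setminus\operatorname{Cl}_{\tau}(\mathbb{X}\setminus\mathbb{A})$.

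There is no genuine obstacle here. The only point needing care is that the ordinary closure is the intersection of closed supersets, i.e.\ $c_{\Gamma}$ in Definition~\ref{def:Gamma-closure}, rather than some other description. Likewise, the ordinary interior must coincide with $i_{\Gamma}$ in Definition~\ref{def:Gamma-interior}. Both are standard.
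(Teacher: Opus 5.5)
Your proposal is correct and follows essentially the same route as the paper: you identify the $\tau_{H^{*}}$-closed sets with $H^{*}C(\mathbb{X})$ and the $\tau_{H^{*}}$-open sets with $H^{*}O(\mathbb{X})$, so both closures (resp.\ interiors) are built from the same family of supersets (resp.\ subsets). Your appeal to the standing arbitrary-intersection hypothesis is harmless but unnecessary, since that property follows automatically once $H^{*}O(\mathbb{X})$ is a topology.
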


\begin{proof}
The closed sets of $(\mathbb{X},\tau_{H^{*}})$ are precisely the
complements of members of
\[
\tau_{H^{*}}=H^{*}O(\mathbb{X}),
\]
and therefore they are exactly the members of
$H^{*}C(\mathbb{X})$. Hence the ordinary
$\tau_{H^{*}}$-closure and the $H^{*}$-closure are obtained from
the same family of closed supersets. Consequently,
\[
H^{*}\operatorname{Cl}(\mathbb{A})
=
\operatorname{Cl}_{\tau_{H^{*}}}(\mathbb{A}).
\]

Similarly, the open sets of $\tau_{H^{*}}$ are exactly the members
of $H^{*}O(\mathbb{X})$. Thus
\[
H^{*}\operatorname{Int}(\mathbb{A})
=
\operatorname{Int}_{\tau_{H^{*}}}(\mathbb{A}).
\]
\end{proof}

\section{Separation Properties of the $H^{*}$-Generated Structure}
\label{sec:separation}

We now examine separation properties determined by $H^{*}$-open
sets. Throughout this section, we assume that
$H^{*}C(\mathbb{X})$ is closed under arbitrary intersections. Hence,
by Theorem~\ref{thm:Hstar-strong-generalized-topology},
\[
\Gamma_{H^{*}}=H^{*}O(\mathbb{X})
\]
is a strong generalized topology on $\mathbb{X}$.

The $H^{*}$-separation axioms considered below are recalled from the
existing $H^{*}$-separation framework; see, for example,
\cite{tomarsome2024}. Their role here is to relate separation in the
$H^{*}$-generated structure to separation in the associated topology,
whenever that topology exists.

\begin{definition}
\label{def:Hstar-separation}
Let $(\mathbb{X},\tau)$ be a topological space.
\begin{enumerate}[label=(\roman*)]

    \item The space $(\mathbb{X},\tau)$ is said to be
    \emph{$H^{*}$-$T_{0}$} if, for every pair of distinct points
    $k,l\in\mathbb{X}$, there exists an $H^{*}$-open set containing
    one of them but not the other.

    \item The space $(\mathbb{X},\tau)$ is said to be
    \emph{$H^{*}$-$T_{1}$} if, for every pair of distinct points
    $k,l\in\mathbb{X}$, there exist $H^{*}$-open sets
    $\mathbb{U}$ and $\mathbb{V}$ such that
    \[
    k\in\mathbb{U},\qquad l\notin\mathbb{U},
    \]
    and
    \[
    l\in\mathbb{V},\qquad k\notin\mathbb{V}.
    \]

    \item The space $(\mathbb{X},\tau)$ is said to be
    \emph{$H^{*}$-$T_{2}$} if, for every pair of distinct points
    $k,l\in\mathbb{X}$, there exist disjoint $H^{*}$-open sets
    $\mathbb{U}$ and $\mathbb{V}$ such that
    \[
    k\in\mathbb{U},
    \qquad
    l\in\mathbb{V}.
    \]

\end{enumerate}
\end{definition}

The definitions immediately yield the standard implication chain.

\begin{proposition}
\label{prop:Hstar-separation-chain}
For a topological space $(\mathbb{X},\tau)$,
\[
H^{*}\text{-}T_{2}
\Longrightarrow
H^{*}\text{-}T_{1}
\Longrightarrow
H^{*}\text{-}T_{0}.
\]
\end{proposition}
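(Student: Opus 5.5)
The plan is to verify both implications directly from Definition~\ref{def:Hstar-separation}, using only the definitions and no structural hypothesis on $H^{*}O(\mathbb{X})$. In particular we will not need closure under unions or finite intersections, so the standing assumption of this section plays no role.

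For $H^{*}$-$T_{2}\Rightarrow H^{*}$-$T_{1}$, we fix distinct points $k,l\in\mathbb{X}$. By $H^{*}$-$T_{2}$ we obtain disjoint $H^{*}$-open sets $\mathbb{U},\mathbb{V}$ with $k\in\mathbb{U}$ and $l\in\mathbb{V}$. Disjointness gives $l\notin\mathbb{U}$, since $l\in\mathbb{V}$ and $\mathbb{U}\cap\mathbb{V}=\varnothing$. Symmetrically, $k\notin\mathbb{V}$. Hence the same pair $\mathbb{U},\mathbb{V}$ witnesses the $H^{*}$-$T_{1}$ condition for $k,l$.

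For $H^{*}$-$T_{1}\Rightarrow H^{*}$-$T_{0}$, we again fix distinct $k,l$. The set $\mathbb{U}$ supplied by $H^{*}$-$T_{1}$ is $H^{*}$-open, contains $k$, and omits $l$. This is exactly what $H^{*}$-$T_{0}$ requires.

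There is no real obstacle here. The only point worth stating explicitly is that every witness set is taken from $H^{*}O(\mathbb{X})$ itself and no new sets are constructed. The argument is therefore valid even when $H^{*}O(\mathbb{X})$ is not a topology, as in Example~\ref{ex:Hstar-not-topology}.
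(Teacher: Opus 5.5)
Your proposal is correct and follows the paper's proof essentially verbatim: disjointness of the $H^{*}$-$T_{2}$ witnesses $\mathbb{U},\mathbb{V}$ gives $l\notin\mathbb{U}$ and $k\notin\mathbb{V}$, and $H^{*}$-$T_{1}\Rightarrow H^{*}$-$T_{0}$ is read off directly from Definition~\ref{def:Hstar-separation}. You are also right that no structural hypothesis on $H^{*}O(\mathbb{X})$ is used; the paper's argument needs none either.
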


\begin{proof}
Suppose that $(\mathbb{X},\tau)$ is $H^{*}$-$T_{2}$ and let
$k,l\in\mathbb{X}$ with $k\neq l$. Then there exist disjoint
$H^{*}$-open sets $\mathbb{U}$ and $\mathbb{V}$ such that
\[
k\in\mathbb{U}
\qquad\text{and}\qquad
l\in\mathbb{V}.
\]
Since
\[
\mathbb{U}\cap\mathbb{V}=\varnothing,
\]
we have
\[
l\notin\mathbb{U}
\qquad\text{and}\qquad
k\notin\mathbb{V}.
\]
Hence $(\mathbb{X},\tau)$ is $H^{*}$-$T_{1}$.

The implication
\[
H^{*}\text{-}T_{1}
\Longrightarrow
H^{*}\text{-}T_{0}
\]
follows immediately from Definition~\ref{def:Hstar-separation}.
\end{proof}

Under the standing generalized-topological hypothesis, the
$H^{*}$-$T_{1}$ condition admits the following useful
characterization.

\begin{theorem}
\label{thm:Hstar-T1-singletons}
The space $(\mathbb{X},\tau)$ is $H^{*}$-$T_{1}$ if and only if
every singleton subset of $\mathbb{X}$ is $H^{*}$-closed.
\end{theorem}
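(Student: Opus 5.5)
The plan is to prove the two implications separately. The standing hypothesis of this section matters only for the forward direction: $H^{*}C(\mathbb{X})$ is closed under arbitrary intersections, so by Theorem~\ref{thm:Hstar-strong-generalized-topology} the family $\Gamma_{H^{*}}=H^{*}O(\mathbb{X})$ is closed under arbitrary unions.

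\emph{Sufficiency} is immediate and needs no hypothesis. Suppose every singleton is $H^{*}$-closed, and let $k\neq l$. Then $\mathbb{U}=\mathbb{X}\setminus\{l\}$ and $\mathbb{V}=\mathbb{X}\setminus\{k\}$ are $H^{*}$-open. Moreover $k\in\mathbb{U}$, $l\notin\mathbb{U}$, $l\in\mathbb{V}$ and $k\notin\mathbb{V}$. This is exactly Definition~\ref{def:Hstar-separation}(ii).

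\emph{Necessity.} Assume $(\mathbb{X},\tau)$ is $H^{*}$-$T_{1}$ and fix $l\in\mathbb{X}$. For each $k\neq l$, choose an $H^{*}$-open set $\mathbb{U}_{k}$ with $k\in\mathbb{U}_{k}$ and $l\notin\mathbb{U}_{k}$. Then
\[
\mathbb{X}\setminus\{l\}=\bigcup_{k\neq l}\mathbb{U}_{k},
\]
since each $\mathbb{U}_{k}$ misses $l$ and every $k\neq l$ lies in its own $\mathbb{U}_{k}$. By closure of $\Gamma_{H^{*}}$ under arbitrary unions, $\mathbb{X}\setminus\{l\}$ is $H^{*}$-open, so $\{l\}$ is $H^{*}$-closed.

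Two details remain. If $\mathbb{X}=\{l\}$ the union is empty, but then $\{l\}=\mathbb{X}$ is $H^{*}$-closed by Proposition~\ref{prop:empty-X-Hstar}. Alternatively, one can argue through closures: $\{l\}=\bigcap_{k\neq l}(\mathbb{X}\setminus\mathbb{U}_{k})$ is an intersection of $H^{*}$-closed sets, hence $H^{*}$-closed by the standing hypothesis.

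The only genuine obstacle is this necessity direction. The union step depends on the arbitrary-union property, which is not available for $H^{*}O(\mathbb{X})$ in general. So the proof must explicitly invoke the standing assumption of the section via Theorem~\ref{thm:Hstar-strong-generalized-topology}. Without it, only the sufficiency direction survives.
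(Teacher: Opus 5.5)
Your proposal is correct and follows essentially the same argument as the paper. The forward direction writes $\mathbb{X}\setminus\{l\}$ as a union of $H^{*}$-open sets and invokes the arbitrary-union closure coming from the section's standing hypothesis, and the converse uses the complements of singletons directly. Your remarks on the one-point case and on the dual intersection formulation are correct but not needed.
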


\begin{proof}
Suppose first that $(\mathbb{X},\tau)$ is $H^{*}$-$T_{1}$.
Fix $k\in\mathbb{X}$. For every
$l\in\mathbb{X}\setminus\{k\}$, there exists an $H^{*}$-open set
$\mathbb{U}_{l}$ such that
\[
l\in\mathbb{U}_{l}
\qquad\text{and}\qquad
k\notin\mathbb{U}_{l}.
\]
Therefore,
\[
\mathbb{X}\setminus\{k\}
=
\bigcup_{l\in\mathbb{X}\setminus\{k\}}
\mathbb{U}_{l}.
\]
Since $H^{*}O(\mathbb{X})$ is closed under arbitrary unions,
$\mathbb{X}\setminus\{k\}$ is $H^{*}$-open. Hence
$\{k\}$ is $H^{*}$-closed.

Conversely, suppose that every singleton subset of $\mathbb{X}$ is
$H^{*}$-closed. Let $k,l\in\mathbb{X}$ with $k\neq l$. Then
\[
\mathbb{X}\setminus\{l\}
\quad\text{and}\quad
\mathbb{X}\setminus\{k\}
\]
are $H^{*}$-open. Moreover,
\[
k\in\mathbb{X}\setminus\{l\},
\qquad
l\notin\mathbb{X}\setminus\{l\},
\]
and
\[
l\in\mathbb{X}\setminus\{k\},
\qquad
k\notin\mathbb{X}\setminus\{k\}.
\]
Thus $(\mathbb{X},\tau)$ is $H^{*}$-$T_{1}$.
\end{proof}

\subsection{$H^{*}$-Difference Separation}
\label{subsec:Hstar-D-separation}

We next recall the $H^{*}$-difference-set separation notions used in
our earlier study of $H^{*}$-difference spaces. They are included
here in order to place them within the generalized-topological and
associated-topological framework developed in the present paper.

\begin{definition}
\label{def:Hstar-D-set}
A subset $\mathbb{D}\subseteq\mathbb{X}$ is called an
\emph{$H^{*}$-difference set}, briefly an \emph{$H^{*}D$-set}, if
there exist
\[
\mathbb{U},\mathbb{V}\in H^{*}O(\mathbb{X})
\]
such that
\[
\mathbb{D}=\mathbb{U}\setminus\mathbb{V}
\]
and
\[
\mathbb{U}\neq\mathbb{X}.
\]
\end{definition}

\begin{definition}
\label{def:Hstar-D1-D2}
The space $(\mathbb{X},\tau)$ is said to be:
\begin{enumerate}[label=(\roman*)]

    \item \emph{$H^{*}$-$D_{1}$} if, for every pair of distinct
    points $k,l\in\mathbb{X}$, there exist $H^{*}D$-sets
    $\mathbb{D}_{1}$ and $\mathbb{D}_{2}$ such that
    \[
    k\in\mathbb{D}_{1},\qquad
    l\notin\mathbb{D}_{1},
    \]
    and
    \[
    l\in\mathbb{D}_{2},\qquad
    k\notin\mathbb{D}_{2};
    \]

    \item \emph{$H^{*}$-$D_{2}$} if, for every pair of distinct
    points $k,l\in\mathbb{X}$, there exist disjoint $H^{*}D$-sets
    $\mathbb{D}_{1}$ and $\mathbb{D}_{2}$ such that
    \[
    k\in\mathbb{D}_{1},
    \qquad
    l\in\mathbb{D}_{2}.
    \]

\end{enumerate}
\end{definition}

For completeness, we recall the following elementary implications.

\begin{proposition}
\label{prop:Hstar-D2-D1}
Every $H^{*}$-$D_{2}$ space is an $H^{*}$-$D_{1}$ space.
\end{proposition}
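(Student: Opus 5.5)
The proof is a direct unwinding of Definition~\ref{def:Hstar-D1-D2}, with no appeal to the standing hypothesis on $H^{*}C(\mathbb{X})$. Assume $(\mathbb{X},\tau)$ is $H^{*}$-$D_{2}$ and fix distinct points $k,l\in\mathbb{X}$. By part (ii) of the definition there are $H^{*}D$-sets $\mathbb{D}_{1}$ and $\mathbb{D}_{2}$ such that
\[
k\in\mathbb{D}_{1},\qquad l\in\mathbb{D}_{2},\qquad \mathbb{D}_{1}\cap\mathbb{D}_{2}=\varnothing .
\]
I will take this same pair as the witnesses required in part (i).

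The key step is to read off the two non-membership conditions from disjointness. Since $l\in\mathbb{D}_{2}$ and $\mathbb{D}_{1}\cap\mathbb{D}_{2}=\varnothing$, we get $l\notin\mathbb{D}_{1}$. Symmetrically, $k\in\mathbb{D}_{1}$ gives $k\notin\mathbb{D}_{2}$. Together with $k\in\mathbb{D}_{1}$ and $l\in\mathbb{D}_{2}$, these are exactly the four conditions of part (i).

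Because $\mathbb{D}_{1}$ and $\mathbb{D}_{2}$ are already $H^{*}D$-sets, no new sets need to be built. In particular, the requirement $\mathbb{U}\neq\mathbb{X}$ in Definition~\ref{def:Hstar-D-set} is inherited automatically. Since $k$ and $l$ were arbitrary, $(\mathbb{X},\tau)$ is $H^{*}$-$D_{1}$.

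There is no real obstacle here; this mirrors the argument for $H^{*}$-$T_{2}\Rightarrow H^{*}$-$T_{1}$ in Proposition~\ref{prop:Hstar-separation-chain}. The only point needing care is to check that the witnesses are reused unchanged, so that their $H^{*}D$-set status is never in question.
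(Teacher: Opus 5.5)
Your proof is correct and matches the paper's argument: you reuse the disjoint $H^{*}D$-sets $\mathbb{D}_{1},\mathbb{D}_{2}$ supplied by the $H^{*}$-$D_{2}$ condition and read off $l\notin\mathbb{D}_{1}$ and $k\notin\mathbb{D}_{2}$ from their disjointness. Your remark that no hypothesis on $H^{*}C(\mathbb{X})$ is needed is also accurate.
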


\begin{proof}
Let $k,l\in\mathbb{X}$ be distinct. Since the space is
$H^{*}$-$D_{2}$, there exist disjoint $H^{*}D$-sets
$\mathbb{D}_{1}$ and $\mathbb{D}_{2}$ such that
\[
k\in\mathbb{D}_{1}
\qquad\text{and}\qquad
l\in\mathbb{D}_{2}.
\]
Their disjointness implies
\[
l\notin\mathbb{D}_{1}
\qquad\text{and}\qquad
k\notin\mathbb{D}_{2}.
\]
Hence the defining conditions of an $H^{*}$-$D_{1}$ space are
satisfied.
\end{proof}

\begin{proposition}
\label{prop:Hstar-D1-T0}
Every $H^{*}$-$D_{1}$ space is an $H^{*}$-$T_{0}$ space.
\end{proposition}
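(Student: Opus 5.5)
The plan is to unwind the definitions directly; no structural hypothesis beyond the definitions of $H^{*}D$-sets and $H^{*}$-$T_{0}$ should be needed. Fix distinct points $k,l\in\mathbb{X}$. Since the space is $H^{*}$-$D_{1}$, Definition~\ref{def:Hstar-D1-D2} provides an $H^{*}D$-set $\mathbb{D}_{1}$ with $k\in\mathbb{D}_{1}$ and $l\notin\mathbb{D}_{1}$. By Definition~\ref{def:Hstar-D-set}, we may write
\[
\mathbb{D}_{1}=\mathbb{U}\setminus\mathbb{V},
\qquad
\mathbb{U},\mathbb{V}\in H^{*}O(\mathbb{X}),
\qquad
\mathbb{U}\neq\mathbb{X}.
\]
The goal is to produce a single $H^{*}$-open set containing exactly one of $k,l$, and the natural candidates are $\mathbb{U}$ and $\mathbb{V}$.

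From $k\in\mathbb{U}\setminus\mathbb{V}$ we get $k\in\mathbb{U}$ and $k\notin\mathbb{V}$. Since $l\notin\mathbb{U}\setminus\mathbb{V}$, either $l\notin\mathbb{U}$ or $l\in\mathbb{V}$. I will split into these two cases.
\begin{enumerate}[label=(\alph*)]
\item If $l\notin\mathbb{U}$, then $\mathbb{U}$ is an $H^{*}$-open set containing $k$ but not $l$.
\item If $l\in\mathbb{U}$, then necessarily $l\in\mathbb{V}$, so $\mathbb{V}$ is an $H^{*}$-open set containing $l$ but not $k$.
\end{enumerate}
In either case, the condition of Definition~\ref{def:Hstar-separation}(i) holds for the pair $k,l$. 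Since the pair was arbitrary, the space is $H^{*}$-$T_{0}$.

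There is no real obstacle here. The only point requiring care is to use only the membership $\mathbb{U},\mathbb{V}\in H^{*}O(\mathbb{X})$. The argument requires neither the auxiliary condition $\mathbb{U}\neq\mathbb{X}$ nor any closure of $H^{*}O(\mathbb{X})$ under intersections or unions, so the standing hypothesis of the section is not actually used. In fact only the set $\mathbb{D}_{1}$ is needed; $\mathbb{D}_{2}$ plays no role.
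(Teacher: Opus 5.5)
Your proof is correct and matches the paper's argument exactly: you write the separating $H^{*}D$-set as $\mathbb{U}\setminus\mathbb{V}$ and split on whether $l\in\mathbb{U}$, producing $\mathbb{U}$ or $\mathbb{V}$ as the required $H^{*}$-open set. Your observation that neither $\mathbb{U}\neq\mathbb{X}$ nor the standing closure hypothesis is needed is also accurate.
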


\begin{proof}
Let $k,l\in\mathbb{X}$ with $k\neq l$. Since
$(\mathbb{X},\tau)$ is $H^{*}$-$D_{1}$, there exists an
$H^{*}D$-set $\mathbb{D}$ such that
\[
k\in\mathbb{D}
\qquad\text{and}\qquad
l\notin\mathbb{D}.
\]
Write
\[
\mathbb{D}=\mathbb{U}\setminus\mathbb{V},
\]
where
\[
\mathbb{U},\mathbb{V}\in H^{*}O(\mathbb{X})
\qquad\text{and}\qquad
\mathbb{U}\neq\mathbb{X}.
\]
Since $k\in\mathbb{D}$,
\[
k\in\mathbb{U}
\qquad\text{and}\qquad
k\notin\mathbb{V}.
\]

If $l\notin\mathbb{U}$, then $\mathbb{U}$ is an $H^{*}$-open set
containing $k$ but not $l$.

If $l\in\mathbb{U}$, then $l\notin\mathbb{D}$ implies
$l\in\mathbb{V}$. Hence $\mathbb{V}$ is an $H^{*}$-open set
containing $l$ but not $k$.

Thus one of the two points belongs to an $H^{*}$-open set that does
not contain the other. Therefore $(\mathbb{X},\tau)$ is
$H^{*}$-$T_{0}$.
\end{proof}

\subsection{Separation Axioms in the Associated $H^{*}$-Topology}
\label{subsec:associated-separation}

We now interpret the preceding $H^{*}$-separation properties through
the associated topology. Throughout this subsection, we assume that
the equivalent conditions of
Theorem~\ref{thm:Hstar-topology-characterization} are satisfied.
Thus,
\[
\tau_{H^{*}}=H^{*}O(\mathbb{X})
\]
is an ordinary topology on $\mathbb{X}$.

Since the open sets of $\tau_{H^{*}}$ are precisely the
$H^{*}$-open subsets of $\mathbb{X}$, the corresponding separation
conditions agree.

\begin{theorem}
\label{thm:Hstar-T0-associated}
The space $(\mathbb{X},\tau)$ is $H^{*}$-$T_{0}$ if and only if
$(\mathbb{X},\tau_{H^{*}})$ is a $T_{0}$ space.
\end{theorem}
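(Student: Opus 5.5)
The plan is to unwind both definitions and observe that they quantify over the same family of sets. Under the standing hypothesis of this subsection, the equivalent conditions of Theorem~\ref{thm:Hstar-topology-characterization} hold. Hence, by Definition~\ref{def:associated-Hstar-topology}, the open sets of $(\mathbb{X},\tau_{H^{*}})$ are exactly the members of
\[
\tau_{H^{*}}=H^{*}O(\mathbb{X}).
\]
The classical $T_{0}$ condition for $(\mathbb{X},\tau_{H^{*}})$ says that for distinct $k,l\in\mathbb{X}$ some $\tau_{H^{*}}$-open set contains one of the points but not the other. Definition~\ref{def:Hstar-separation}(i) says the same thing with ``$H^{*}$-open'' in place of ``$\tau_{H^{*}}$-open''.

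For the forward direction, I assume $(\mathbb{X},\tau)$ is $H^{*}$-$T_{0}$ and fix distinct $k,l$. This gives some $\mathbb{U}\in H^{*}O(\mathbb{X})$ separating them in the $T_{0}$ sense. Since $\mathbb{U}\in\tau_{H^{*}}$, the same set witnesses $T_{0}$ separation in $(\mathbb{X},\tau_{H^{*}})$. The converse is symmetric: any $\tau_{H^{*}}$-open witness is a member of $H^{*}O(\mathbb{X})$, hence $H^{*}$-open.

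There is no real obstacle. The only point needing care is to state explicitly that the identification $\tau_{H^{*}}=H^{*}O(\mathbb{X})$ is an equality of families of sets, valid precisely because the finite-intersection hypothesis makes $\tau_{H^{*}}$ exist. No closure-operator results such as Corollary~\ref{cor:Hstar-associated-closure} are required.
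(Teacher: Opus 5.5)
Your proposal is correct and is essentially the paper's argument: both rest on the equality $\tau_{H^{*}}=H^{*}O(\mathbb{X})$ of families of sets. That equality makes the $H^{*}$-$T_{0}$ condition and the classical $T_{0}$ condition for $(\mathbb{X},\tau_{H^{*}})$ literally identical, and you simply spell out the two directions explicitly where the paper states them in one line.
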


\begin{proof}
The assertion follows immediately from
\[
\tau_{H^{*}}=H^{*}O(\mathbb{X}),
\]
since both conditions require exactly the same open-set separation
of every pair of distinct points.
\end{proof}

\begin{theorem}
\label{thm:Hstar-T1-associated}
The space $(\mathbb{X},\tau)$ is $H^{*}$-$T_{1}$ if and only if
$(\mathbb{X},\tau_{H^{*}})$ is a $T_{1}$ space.
\end{theorem}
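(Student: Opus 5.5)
The plan is to argue directly from the identification
\[
\tau_{H^{*}}=H^{*}O(\mathbb{X}),
\]
which holds under the standing assumption of this subsection, namely that the equivalent conditions of Theorem~\ref{thm:Hstar-topology-characterization} are satisfied. The $T_{1}$ condition for $(\mathbb{X},\tau_{H^{*}})$ asks for the following: for distinct $k,l$, there are $\tau_{H^{*}}$-open sets $\mathbb{U}\ni k$ with $l\notin\mathbb{U}$, and $\mathbb{V}\ni l$ with $k\notin\mathbb{V}$. Since the $\tau_{H^{*}}$-open sets are literally the $H^{*}$-open sets, this is word for word clause (ii) of Definition~\ref{def:Hstar-separation}. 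Both implications therefore follow by transporting the witnessing sets $\mathbb{U},\mathbb{V}$ from one family to the other. This mirrors the proof of Theorem~\ref{thm:Hstar-T0-associated}.

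As a cross-check, and to make the argument less purely notational, I would also record a second route through singletons. By Theorem~\ref{thm:Hstar-T1-singletons}, the space $(\mathbb{X},\tau)$ is $H^{*}$-$T_{1}$ if and only if every singleton is $H^{*}$-closed. By the description of closed sets in the proof of Corollary~\ref{cor:Hstar-associated-closure}, or equivalently by Proposition~\ref{prop:Gamma-Hstar-closed}, the $\tau_{H^{*}}$-closed sets are exactly the members of $H^{*}C(\mathbb{X})$. So every singleton being $H^{*}$-closed is the same as every singleton being $\tau_{H^{*}}$-closed, and by the classical characterization this is equivalent to $(\mathbb{X},\tau_{H^{*}})$ being $T_{1}$.

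There is no real obstacle. The only point requiring care is that the equality $\tau_{H^{*}}=H^{*}O(\mathbb{X})$ is available only under the subsection's hypothesis, and this hypothesis should be invoked explicitly. Note also that Theorem~\ref{thm:Hstar-T1-singletons} uses only closure under arbitrary unions, which is guaranteed by Theorem~\ref{thm:Hstar-strong-generalized-topology}.
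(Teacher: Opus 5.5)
Your proposal is correct and its main argument is the paper's. Under the standing hypothesis $\tau_{H^{*}}=H^{*}O(\mathbb{X})$, the $H^{*}$-$T_{1}$ condition and the $T_{1}$ condition for $(\mathbb{X},\tau_{H^{*}})$ are literally the same statement. Your singleton cross-check via Theorem~\ref{thm:Hstar-T1-singletons} is also valid, but it is not needed.
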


\begin{proof}
Since
\[
\tau_{H^{*}}=H^{*}O(\mathbb{X}),
\]
the $H^{*}$-open sets occurring in
Definition~\ref{def:Hstar-separation} are precisely the open sets of
$\tau_{H^{*}}$. Hence the two $T_{1}$ conditions are identical.
\end{proof}

\begin{theorem}
\label{thm:Hstar-T2-associated}
The space $(\mathbb{X},\tau)$ is $H^{*}$-$T_{2}$ if and only if
$(\mathbb{X},\tau_{H^{*}})$ is Hausdorff.
\end{theorem}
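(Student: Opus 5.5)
The plan is to argue exactly as in Theorems~\ref{thm:Hstar-T0-associated} and~\ref{thm:Hstar-T1-associated}. Under the standing hypothesis of this subsection, the equivalent conditions of Theorem~\ref{thm:Hstar-topology-characterization} hold, so $\tau_{H^{*}}=H^{*}O(\mathbb{X})$ is an ordinary topology. The open sets of $(\mathbb{X},\tau_{H^{*}})$ are therefore precisely the $H^{*}$-open subsets of $\mathbb{X}$. I would unwind both definitions and check that they coincide, proving each implication separately.

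For the forward direction, assume $(\mathbb{X},\tau)$ is $H^{*}$-$T_{2}$ and let $k\neq l$. Definition~\ref{def:Hstar-separation}(iii) provides disjoint $\mathbb{U},\mathbb{V}\in H^{*}O(\mathbb{X})$ with $k\in\mathbb{U}$ and $l\in\mathbb{V}$. Since $H^{*}O(\mathbb{X})=\tau_{H^{*}}$, these are disjoint $\tau_{H^{*}}$-open neighbourhoods of $k$ and $l$. Hence $(\mathbb{X},\tau_{H^{*}})$ is Hausdorff.

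For the converse, assume $(\mathbb{X},\tau_{H^{*}})$ is Hausdorff. Then any two distinct points have disjoint $\tau_{H^{*}}$-open neighbourhoods. By the same identification, these are disjoint $H^{*}$-open sets, so the defining condition of $H^{*}$-$T_{2}$ holds.

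There is essentially no obstacle here. The only point needing care is that disjointness is a purely set-theoretic property of the two chosen sets and is unaffected by which family they are regarded as belonging to. The finite-intersection hypothesis is used only to ensure that $\tau_{H^{*}}$ exists as a topology, so that the word ``Hausdorff'' makes sense. I would also remark that the argument never uses intersections of $H^{*}$-open sets, so the same equivalence holds at the level of $\Gamma_{H^{*}}$ for the generalized Hausdorff property.
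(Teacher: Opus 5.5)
Your proposal is correct and follows the paper's proof: both rest only on the identification $\tau_{H^{*}}=H^{*}O(\mathbb{X})$, so disjoint $H^{*}$-open neighbourhoods of two distinct points are exactly disjoint $\tau_{H^{*}}$-open neighbourhoods of those points. You write out the two directions separately, whereas the paper states the equivalence in a single sentence.
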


\begin{proof}
Again,
\[
\tau_{H^{*}}=H^{*}O(\mathbb{X}).
\]
Thus disjoint $H^{*}$-open neighbourhoods of two distinct points are
exactly disjoint $\tau_{H^{*}}$-open neighbourhoods of those points.
The equivalence follows.
\end{proof}

\begin{corollary}
\label{cor:Hstar-Ti-associated}
For $i\in\{0,1,2\}$,
\[
(\mathbb{X},\tau)
\text{ is }H^{*}\text{-}T_{i}
\quad\Longleftrightarrow\quad
(\mathbb{X},\tau_{H^{*}})
\text{ is }T_{i}.
\]
\end{corollary}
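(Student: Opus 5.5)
This corollary simply collects Theorems~\ref{thm:Hstar-T0-associated}, \ref{thm:Hstar-T1-associated} and \ref{thm:Hstar-T2-associated}, so the plan is to argue case by case on $i\in\{0,1,2\}$ and invoke the matching theorem. The standing assumption of Subsection~\ref{subsec:associated-separation} is that the equivalent conditions of Theorem~\ref{thm:Hstar-topology-characterization} hold. Hence
\[
\tau_{H^{*}}=H^{*}O(\mathbb{X})
\]
is an ordinary topology, and the open sets of $(\mathbb{X},\tau_{H^{*}})$ are precisely the $H^{*}$-open sets.

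The key steps are as follows.
\begin{enumerate}[label=(\arabic*)]
\item For $i=0$, cite Theorem~\ref{thm:Hstar-T0-associated}.
\item For $i=1$, cite Theorem~\ref{thm:Hstar-T1-associated}.
\item For $i=2$, cite Theorem~\ref{thm:Hstar-T2-associated}, noting that ``Hausdorff'' is the same as $T_{2}$.
\end{enumerate}

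To make the argument self-contained, I would also observe directly why each equivalence holds. Each condition in Definition~\ref{def:Hstar-separation} is a statement quantifying only over distinct points and over members of $H^{*}O(\mathbb{X})$. The corresponding classical axiom $T_{i}$ for $(\mathbb{X},\tau_{H^{*}})$ is the identical statement with ``member of $\tau_{H^{*}}$'' in place of ``$H^{*}$-open set''. Since these two families are literally equal, the statements are identical.

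There is no genuine obstacle here. The only point needing care is to make explicit that the corollary is asserted under the subsection's hypothesis that $\tau_{H^{*}}$ exists, since otherwise the right-hand side is undefined.
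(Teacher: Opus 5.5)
Your proposal is correct and matches the paper. The paper states this corollary without a separate proof, as the collection of Theorems~\ref{thm:Hstar-T0-associated}, \ref{thm:Hstar-T1-associated} and \ref{thm:Hstar-T2-associated}, each of which rests on the literal equality $\tau_{H^{*}}=H^{*}O(\mathbb{X})$ under the subsection's standing hypothesis that this family is an ordinary topology, as you note.
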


\subsection{$H^{*}$-Difference Separation and the Associated Topology}
\label{subsec:Hstar-D-associated}

For comparison, recall the classical difference-set notion.

\begin{definition}
\label{def:classical-D-set}
Let $(\mathbb{X},\sigma)$ be a topological space. A subset
$\mathbb{D}\subseteq\mathbb{X}$ is called a \emph{$D$-set} if
\[
\mathbb{D}=\mathbb{U}\setminus\mathbb{V},
\]
where
\[
\mathbb{U},\mathbb{V}\in\sigma
\qquad\text{and}\qquad
\mathbb{U}\neq\mathbb{X}.
\]
\end{definition}

\begin{proposition}
\label{prop:Hstar-D-classical-D}
A subset $\mathbb{D}\subseteq\mathbb{X}$ is an $H^{*}D$-set in
$(\mathbb{X},\tau)$ if and only if it is a $D$-set in
$(\mathbb{X},\tau_{H^{*}})$.
\end{proposition}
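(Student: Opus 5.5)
The plan is to unwind the two definitions and observe that they coincide word for word once the standing hypothesis of this subsection is used. That hypothesis says that the equivalent conditions of Theorem~\ref{thm:Hstar-topology-characterization} hold, so
\[
\tau_{H^{*}}=H^{*}O(\mathbb{X})
\]
is an ordinary topology on $\mathbb{X}$ by Definition~\ref{def:associated-Hstar-topology}. The argument then reduces to comparing Definition~\ref{def:Hstar-D-set} with Definition~\ref{def:classical-D-set}, the latter applied with $\sigma=\tau_{H^{*}}$.

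For the forward direction, suppose $\mathbb{D}$ is an $H^{*}D$-set. Then there are $\mathbb{U},\mathbb{V}\in H^{*}O(\mathbb{X})$ with
\[
\mathbb{D}=\mathbb{U}\setminus\mathbb{V}
\qquad\text{and}\qquad
\mathbb{U}\neq\mathbb{X}.
\]
Since $H^{*}O(\mathbb{X})=\tau_{H^{*}}$, both $\mathbb{U}$ and $\mathbb{V}$ are $\tau_{H^{*}}$-open. The same representation therefore exhibits $\mathbb{D}$ as a $D$-set in $(\mathbb{X},\tau_{H^{*}})$.

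For the converse, suppose $\mathbb{D}=\mathbb{U}\setminus\mathbb{V}$ with $\mathbb{U},\mathbb{V}\in\tau_{H^{*}}$ and $\mathbb{U}\neq\mathbb{X}$. By the same equality of families, $\mathbb{U}$ and $\mathbb{V}$ are $H^{*}$-open. The representation is unchanged, so $\mathbb{D}$ is an $H^{*}D$-set.

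No step is a genuine obstacle. The only point needing care is to check that the side condition $\mathbb{U}\neq\mathbb{X}$ is literally the same in both definitions and refers to the same ambient set $\mathbb{X}$. It is, so nothing about it needs to be transferred between the two settings. The existence of $\tau_{H^{*}}$, and not merely of $\Gamma_{H^{*}}$, is used only to make sense of the phrase ``$D$-set in $(\mathbb{X},\tau_{H^{*}})$'', because Definition~\ref{def:classical-D-set} is stated for ordinary topological spaces.
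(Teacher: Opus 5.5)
Your proposal is correct and matches the paper's proof: both unwind Definitions~\ref{def:Hstar-D-set} and \ref{def:classical-D-set} and use the identity $H^{*}O(\mathbb{X})=\tau_{H^{*}}$ from the subsection's standing hypothesis. Splitting the argument into two directions only spells out the paper's one-line observation that the two definitions are literally the same.
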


\begin{proof}
By Definition~\ref{def:Hstar-D-set}, $\mathbb{D}$ is an
$H^{*}D$-set if and only if
\[
\mathbb{D}=\mathbb{U}\setminus\mathbb{V},
\]
where
\[
\mathbb{U},\mathbb{V}\in H^{*}O(\mathbb{X})
\qquad\text{and}\qquad
\mathbb{U}\neq\mathbb{X}.
\]
Since
\[
H^{*}O(\mathbb{X})=\tau_{H^{*}},
\]
this is precisely the definition of a $D$-set in
$(\mathbb{X},\tau_{H^{*}})$.
\end{proof}

\begin{theorem}
\label{thm:Hstar-D-associated}
For $i\in\{1,2\}$,
\[
(\mathbb{X},\tau)
\text{ is }H^{*}\text{-}D_{i}
\quad\Longleftrightarrow\quad
(\mathbb{X},\tau_{H^{*}})
\text{ is }D_{i}.
\]
\end{theorem}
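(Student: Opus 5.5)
The proof will reduce the statement to Proposition~\ref{prop:Hstar-D-classical-D}. Throughout we work under the standing assumption of this subsection: the equivalent conditions of Theorem~\ref{thm:Hstar-topology-characterization} hold, so $\tau_{H^{*}}=H^{*}O(\mathbb{X})$ is an ordinary topology. The classical $D_{1}$ and $D_{2}$ axioms for $(\mathbb{X},\tau_{H^{*}})$ are formulated exactly as in Definition~\ref{def:Hstar-D1-D2}, except that $H^{*}D$-sets are replaced by $D$-sets in the sense of Definition~\ref{def:classical-D-set} with $\sigma=\tau_{H^{*}}$. By Proposition~\ref{prop:Hstar-D-classical-D}, the family of $H^{*}D$-sets of $(\mathbb{X},\tau)$ coincides with the family of $D$-sets of $(\mathbb{X},\tau_{H^{*}})$. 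This identification is the only input needed.

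For $i=1$, fix distinct points $k,l\in\mathbb{X}$. Suppose $(\mathbb{X},\tau)$ is $H^{*}$-$D_{1}$, and choose $H^{*}D$-sets $\mathbb{D}_{1},\mathbb{D}_{2}$ with
\[
k\in\mathbb{D}_{1},\quad l\notin\mathbb{D}_{1},\quad l\in\mathbb{D}_{2},\quad k\notin\mathbb{D}_{2}.
\]
By Proposition~\ref{prop:Hstar-D-classical-D}, $\mathbb{D}_{1}$ and $\mathbb{D}_{2}$ are $D$-sets of $(\mathbb{X},\tau_{H^{*}})$ with the same membership pattern. Hence $(\mathbb{X},\tau_{H^{*}})$ is $D_{1}$. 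The converse uses the same pair of sets, read through the reverse direction of Proposition~\ref{prop:Hstar-D-classical-D}.

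For $i=2$ the argument is identical. The only additional requirement is that $\mathbb{D}_{1}\cap\mathbb{D}_{2}=\varnothing$, and disjointness is a purely set-theoretic property, so it is unaffected by the identification of the two families.

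There is no genuine obstacle here. The one point to check is that the classical definitions of $D_{1}$ and $D_{2}$ use the same convention $\mathbb{U}\neq\mathbb{X}$ as Definition~\ref{def:Hstar-D-set}. Definition~\ref{def:classical-D-set} adopts this convention explicitly, so the equivalence follows at once, point by point.
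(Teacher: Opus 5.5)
Your proposal is correct and matches the paper's proof: both reduce the statement to Proposition~\ref{prop:Hstar-D-classical-D}, which identifies the $H^{*}D$-sets of $(\mathbb{X},\tau)$ with the $D$-sets of $(\mathbb{X},\tau_{H^{*}})$, so the $D_{1}$ and $D_{2}$ point-separation conditions coincide verbatim. Your remarks on the membership pattern, the purely set-theoretic nature of disjointness, and the matching $\mathbb{U}\neq\mathbb{X}$ convention simply make explicit what the paper's one-line argument leaves implicit.
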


\begin{proof}
By Proposition~\ref{prop:Hstar-D-classical-D}, the $H^{*}D$-sets of
$(\mathbb{X},\tau)$ are exactly the $D$-sets of
$(\mathbb{X},\tau_{H^{*}})$. Hence the point-separation conditions
defining $H^{*}$-$D_{1}$ and $H^{*}$-$D_{2}$ coincide,
respectively, with the classical $D_{1}$ and $D_{2}$ conditions in
the associated topology.
\end{proof}

\begin{corollary}
\label{cor:Hstar-D1-D2-associated}
If the classical $D_{1}$ and $D_{2}$ conditions are equivalent in
$(\mathbb{X},\tau_{H^{*}})$, then
\[
(\mathbb{X},\tau)
\text{ is }H^{*}\text{-}D_{1}
\quad\Longleftrightarrow\quad
(\mathbb{X},\tau)
\text{ is }H^{*}\text{-}D_{2}.
\]
\end{corollary}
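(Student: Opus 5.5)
The plan is to reduce the corollary to Theorem~\ref{thm:Hstar-D-associated}, applied once with $i=1$ and once with $i=2$. Throughout this subsection the equivalent conditions of Theorem~\ref{thm:Hstar-topology-characterization} are assumed. Hence $\tau_{H^{*}}=H^{*}O(\mathbb{X})$ is an ordinary topology, and Theorem~\ref{thm:Hstar-D-associated} is available.

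First I treat the forward direction. Suppose $(\mathbb{X},\tau)$ is $H^{*}$-$D_{1}$. By Theorem~\ref{thm:Hstar-D-associated} with $i=1$, the space $(\mathbb{X},\tau_{H^{*}})$ is classically $D_{1}$. The hypothesis says that $D_{1}$ and $D_{2}$ are equivalent in $(\mathbb{X},\tau_{H^{*}})$, so this space is also $D_{2}$. Applying Theorem~\ref{thm:Hstar-D-associated} with $i=2$ then shows that $(\mathbb{X},\tau)$ is $H^{*}$-$D_{2}$.

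For the converse there are two routes. The direct one is Proposition~\ref{prop:Hstar-D2-D1}, which gives $H^{*}$-$D_{2}\Rightarrow H^{*}$-$D_{1}$ without using the hypothesis at all. Alternatively, one can run the chain above in reverse through $\tau_{H^{*}}$. I would cite Proposition~\ref{prop:Hstar-D2-D1}, since it is cleaner.

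There is no real obstacle here. The only point needing care is to note explicitly that the standing assumption of the subsection, namely that $\tau_{H^{*}}$ exists as an ordinary topology, is in force. Without it, both Theorem~\ref{thm:Hstar-D-associated} and the hypothesis about $(\mathbb{X},\tau_{H^{*}})$ would be meaningless.
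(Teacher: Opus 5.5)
Your proof is correct and follows the paper, which obtains the corollary directly from Theorem~\ref{thm:Hstar-D-associated} (for $i=1$ and $i=2$) under the standing assumption that $\tau_{H^{*}}$ is an ordinary topology. Citing Proposition~\ref{prop:Hstar-D2-D1} for the converse is a harmless refinement: it shows the hypothesis on $(\mathbb{X},\tau_{H^{*}})$ is needed only for the direction $H^{*}$-$D_{1}\Rightarrow H^{*}$-$D_{2}$.
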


\begin{proof}
This follows immediately from
Theorem~\ref{thm:Hstar-D-associated}.
\end{proof}

\begin{remark}
\label{rem:associated-separation-interpretation}
Whenever the associated topology exists, the preceding equivalences
should be viewed as structural identifications rather than as
independent separation phenomena. Indeed, the open sets of
$(\mathbb{X},\tau_{H^{*}})$ are exactly the $H^{*}$-open sets of
$(\mathbb{X},\tau)$. Consequently, the $H^{*}$-$T_i$ conditions
and the corresponding $H^{*}$-difference separation conditions
translate directly into their classical counterparts in the
associated topology. This provides the bridge needed for the
higher separation properties considered in the next section.
\end{remark}

\section{Higher Separation Properties in the Associated
$H^{*}$-Topology}
\label{sec:higher-separation}

The preceding section establishes that, whenever the associated
topology exists, the point-separation properties defined in terms of
$H^{*}$-open sets correspond to the usual separation properties of
$(\mathbb{X},\tau_{H^{*}})$. We now extend this interpretation to
separation conditions involving points and closed sets, as well as
pairs of disjoint closed sets.

Throughout this section, we assume that the equivalent conditions of
Theorem~\ref{thm:Hstar-topology-characterization} are satisfied, so
that
\[
\tau_{H^{*}}=H^{*}O(\mathbb{X})
\]
is an ordinary topology on $\mathbb{X}$. Consequently, the closed
sets of $(\mathbb{X},\tau_{H^{*}})$ are precisely the members of
$H^{*}C(\mathbb{X})$.

\subsection{$H^{*}$-Regularity}
\label{subsec:Hstar-regularity}

\begin{definition}
\label{def:Hstar-regular}
A topological space $(\mathbb{X},\tau)$ is said to be
\emph{$H^{*}$-regular} if, whenever
$\mathbb{F}\in H^{*}C(\mathbb{X})$ and
$k\in\mathbb{X}\setminus\mathbb{F}$, there exist disjoint
$H^{*}$-open sets $\mathbb{U}$ and $\mathbb{V}$ such that
\[
k\in\mathbb{U}
\qquad\text{and}\qquad
\mathbb{F}\subseteq\mathbb{V}.
\]
\end{definition}

The following result identifies this condition with ordinary
regularity in the associated topology.

\begin{theorem}
\label{thm:Hstar-regular-associated}
The space $(\mathbb{X},\tau)$ is $H^{*}$-regular if and only if
$(\mathbb{X},\tau_{H^{*}})$ is regular.
\end{theorem}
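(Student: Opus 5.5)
The proof is a direct translation through the identifications of the families of open and closed sets, in the same way as Theorems~\ref{thm:Hstar-T0-associated}--\ref{thm:Hstar-T2-associated}. Recall that, by the standing assumption of this section, $\tau_{H^{*}}=H^{*}O(\mathbb{X})$ is an ordinary topology. Its closed sets are the complements of its open sets, so they are exactly the members of $H^{*}C(\mathbb{X})$; this is the argument already given in Corollary~\ref{cor:Hstar-associated-closure}, or equivalently Proposition~\ref{prop:Gamma-Hstar-closed}. I will fix the convention that ``regular'' means the point--closed-set separation property: for every $\tau_{H^{*}}$-closed $\mathbb{F}$ and every $k\notin\mathbb{F}$ there are disjoint $\tau_{H^{*}}$-open sets separating them. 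No $T_{1}$ requirement is included, matching Definition~\ref{def:Hstar-regular}. If a $T_1$ clause were wanted, one would add Theorem~\ref{thm:Hstar-T1-associated} to both sides.

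For the forward direction, assume $(\mathbb{X},\tau)$ is $H^{*}$-regular. Let $\mathbb{F}$ be $\tau_{H^{*}}$-closed and $k\in\mathbb{X}\setminus\mathbb{F}$. By the identification above, $\mathbb{F}\in H^{*}C(\mathbb{X})$. Definition~\ref{def:Hstar-regular} then gives disjoint $\mathbb{U},\mathbb{V}\in H^{*}O(\mathbb{X})$ with $k\in\mathbb{U}$ and $\mathbb{F}\subseteq\mathbb{V}$. Since $H^{*}O(\mathbb{X})=\tau_{H^{*}}$, these are disjoint $\tau_{H^{*}}$-open sets separating $k$ and $\mathbb{F}$, so $(\mathbb{X},\tau_{H^{*}})$ is regular.

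The converse is the same argument read backwards. Take $\mathbb{F}\in H^{*}C(\mathbb{X})$ and $k\notin\mathbb{F}$. Then $\mathbb{F}$ is $\tau_{H^{*}}$-closed, so regularity of $(\mathbb{X},\tau_{H^{*}})$ yields disjoint $\tau_{H^{*}}$-open sets $\mathbb{U}\ni k$ and $\mathbb{V}\supseteq\mathbb{F}$. These are $H^{*}$-open, as required. There is no real obstacle. The only point needing care is to state explicitly that the closed sets of the associated topology coincide with $H^{*}C(\mathbb{X})$, and that this uses the hypothesis of Theorem~\ref{thm:Hstar-topology-characterization}, so that ``regular'' makes sense for an ordinary topology. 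The coincidence itself holds merely by complementation.
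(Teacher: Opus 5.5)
Your proposal is correct and follows essentially the same route as the paper: both directions simply translate the point--closed-set separation condition through the identifications $\tau_{H^{*}}=H^{*}O(\mathbb{X})$ and ($\tau_{H^{*}}$-closed sets) $=H^{*}C(\mathbb{X})$. Your choice to read ``regular'' without a $T_{1}$ clause also matches the paper, which treats the $T_{1}$ part separately in Corollary~\ref{cor:Hstar-T3-associated}.
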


\begin{proof}
Suppose that $(\mathbb{X},\tau)$ is $H^{*}$-regular. Let
$\mathbb{F}$ be a closed subset of
$(\mathbb{X},\tau_{H^{*}})$ and let
\[
k\in\mathbb{X}\setminus\mathbb{F}.
\]
Since the $\tau_{H^{*}}$-closed sets are precisely the
$H^{*}$-closed sets,
\[
\mathbb{F}\in H^{*}C(\mathbb{X}).
\]
By $H^{*}$-regularity, there exist disjoint $H^{*}$-open sets
$\mathbb{U}$ and $\mathbb{V}$ such that
\[
k\in\mathbb{U},
\qquad
\mathbb{F}\subseteq\mathbb{V}.
\]
Since
\[
H^{*}O(\mathbb{X})=\tau_{H^{*}},
\]
the sets $\mathbb{U}$ and $\mathbb{V}$ are
$\tau_{H^{*}}$-open. Hence $(\mathbb{X},\tau_{H^{*}})$ is regular.

Conversely, suppose that $(\mathbb{X},\tau_{H^{*}})$ is regular.
Let $\mathbb{F}\in H^{*}C(\mathbb{X})$ and let
\[
k\in\mathbb{X}\setminus\mathbb{F}.
\]
Then $\mathbb{F}$ is $\tau_{H^{*}}$-closed. By regularity, there
exist disjoint $\tau_{H^{*}}$-open sets $\mathbb{U}$ and
$\mathbb{V}$ such that
\[
k\in\mathbb{U},
\qquad
\mathbb{F}\subseteq\mathbb{V}.
\]
Since the $\tau_{H^{*}}$-open sets are exactly the $H^{*}$-open
sets, $(\mathbb{X},\tau)$ is $H^{*}$-regular.
\end{proof}

\begin{definition}
\label{def:Hstar-T3}
The space $(\mathbb{X},\tau)$ is called an
\emph{$H^{*}$-$T_{3}$ space} if it is both $H^{*}$-regular and
$H^{*}$-$T_{1}$.
\end{definition}

\begin{corollary}
\label{cor:Hstar-T3-associated}
The space $(\mathbb{X},\tau)$ is $H^{*}$-$T_{3}$ if and only if
$(\mathbb{X},\tau_{H^{*}})$ is $T_{3}$.
\end{corollary}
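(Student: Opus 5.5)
The plan is to combine the two equivalences already established for the associated topology. By Definition~\ref{def:Hstar-T3}, $(\mathbb{X},\tau)$ is $H^{*}$-$T_{3}$ exactly when it is both $H^{*}$-regular and $H^{*}$-$T_{1}$. On the classical side, $(\mathbb{X},\tau_{H^{*}})$ is $T_{3}$ exactly when it is both regular and $T_{1}$; this is the standard convention, and it parallels Definition~\ref{def:Hstar-T3}. So the corollary should reduce to two conjunct-wise equivalences.

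The argument has two steps.

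\emph{Regularity.} By Theorem~\ref{thm:Hstar-regular-associated}, $(\mathbb{X},\tau)$ is $H^{*}$-regular if and only if $(\mathbb{X},\tau_{H^{*}})$ is regular.

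\emph{The $T_1$ axiom.} By Theorem~\ref{thm:Hstar-T1-associated}, or equivalently Corollary~\ref{cor:Hstar-Ti-associated} with $i=1$, $(\mathbb{X},\tau)$ is $H^{*}$-$T_{1}$ if and only if $(\mathbb{X},\tau_{H^{*}})$ is $T_{1}$.

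Taking the conjunction of these two equivalences yields
\[
H^{*}\text{-}T_{3}
\iff
\bigl(H^{*}\text{-regular}\ \wedge\ H^{*}\text{-}T_{1}\bigr)
\iff
\bigl(\tau_{H^{*}}\text{-regular}\ \wedge\ \tau_{H^{*}}\text{-}T_{1}\bigr)
\iff
(\mathbb{X},\tau_{H^{*}})\text{ is }T_{3}.
\]

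There is no substantive obstacle here. The only point needing care is the definitional convention for $T_{3}$, since some authors use $T_{3}$ to mean regular alone. I will state explicitly that $T_{3}$ is understood as regular plus $T_{1}$, matching Definition~\ref{def:Hstar-T3}.

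As a side remark, Theorem~\ref{thm:Hstar-T1-singletons} also shows that the $T_{1}$ half amounts to all singletons being $\tau_{H^{*}}$-closed. This gives an alternative check of the second equivalence, but it is not needed.
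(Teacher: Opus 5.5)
Your proposal is correct and matches the paper's proof, which likewise obtains the corollary by combining Theorem~\ref{thm:Hstar-regular-associated} for regularity with Theorem~\ref{thm:Hstar-T1-associated} for the $T_1$ axiom. Your explicit remark that $T_3$ means regular plus $T_1$, mirroring Definition~\ref{def:Hstar-T3}, is a useful clarification that the paper leaves implicit.
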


\begin{proof}
This follows from
Theorem~\ref{thm:Hstar-regular-associated} together with
Theorem~\ref{thm:Hstar-T1-associated}.
\end{proof}

\subsection{$H^{*}$-Normality}
\label{subsec:Hstar-normality}

The notion of $H^{*}$-normality has been considered previously in
the literature; see Sharma, Sharma and Tomar
\cite{sharma2015h}. We recall it here in order to identify its
relation with normality in the associated topology.

\begin{definition}
\label{def:Hstar-normal}
A topological space $(\mathbb{X},\tau)$ is said to be
\emph{$H^{*}$-normal} if, for every pair of disjoint
$H^{*}$-closed sets $\mathbb{F}_{1}$ and $\mathbb{F}_{2}$, there
exist disjoint $H^{*}$-open sets $\mathbb{U}_{1}$ and
$\mathbb{U}_{2}$ such that
\[
\mathbb{F}_{1}\subseteq\mathbb{U}_{1}
\qquad\text{and}\qquad
\mathbb{F}_{2}\subseteq\mathbb{U}_{2}.
\]
\end{definition}

The associated topology gives the following direct interpretation of
$H^{*}$-normality.

\begin{theorem}
\label{thm:Hstar-normal-associated}
The space $(\mathbb{X},\tau)$ is $H^{*}$-normal if and only if
$(\mathbb{X},\tau_{H^{*}})$ is normal.
\end{theorem}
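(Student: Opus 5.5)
The plan is to follow the same pattern as the proof of Theorem~\ref{thm:Hstar-regular-associated}. The only inputs needed are the two identifications in force throughout this section: the open sets of $(\mathbb{X},\tau_{H^{*}})$ are exactly the members of $H^{*}O(\mathbb{X})$, and the closed sets of $(\mathbb{X},\tau_{H^{*}})$ are exactly the members of $H^{*}C(\mathbb{X})$. The latter was noted at the start of this section and in the proof of Corollary~\ref{cor:Hstar-associated-closure}. No properties of $h$-closures or $hCg$-open sets will be used; the argument is purely a translation between the two vocabularies.

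For the forward direction, assume $(\mathbb{X},\tau)$ is $H^{*}$-normal. Take disjoint $\tau_{H^{*}}$-closed sets $\mathbb{F}_{1}$ and $\mathbb{F}_{2}$. By the identification of closed sets, both lie in $H^{*}C(\mathbb{X})$. Definition~\ref{def:Hstar-normal} then supplies disjoint $H^{*}$-open sets $\mathbb{U}_{1}\supseteq\mathbb{F}_{1}$ and $\mathbb{U}_{2}\supseteq\mathbb{F}_{2}$. Since $H^{*}O(\mathbb{X})=\tau_{H^{*}}$, these are disjoint $\tau_{H^{*}}$-open sets separating $\mathbb{F}_{1}$ and $\mathbb{F}_{2}$, so $(\mathbb{X},\tau_{H^{*}})$ is normal.

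For the converse, assume $(\mathbb{X},\tau_{H^{*}})$ is normal. Take disjoint $\mathbb{F}_{1},\mathbb{F}_{2}\in H^{*}C(\mathbb{X})$. These are $\tau_{H^{*}}$-closed, so normality gives disjoint $\tau_{H^{*}}$-open sets containing them. These sets are $H^{*}$-open, which is exactly the requirement of Definition~\ref{def:Hstar-normal}.

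There is no real obstacle. The one point to watch is which notion of normality is meant. The statement should be read with normality as bare separation of disjoint closed sets, since Definition~\ref{def:Hstar-normal} imposes no $T_{1}$ condition. If a $T_{4}$ version is wanted, it would follow by combining this result with Theorem~\ref{thm:Hstar-T1-associated}, as in Corollary~\ref{cor:Hstar-T3-associated}.
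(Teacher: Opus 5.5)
Your proof is correct and matches the paper's argument exactly. Both directions are a direct translation using $H^{*}O(\mathbb{X})=\tau_{H^{*}}$ and the identification of the $\tau_{H^{*}}$-closed sets with $H^{*}C(\mathbb{X})$; your reading of normality without a $T_{1}$ condition also agrees with the paper, which treats the $T_{4}$ version separately in Corollary~\ref{cor:Hstar-T4-associated}.
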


\begin{proof}
Suppose first that $(\mathbb{X},\tau)$ is $H^{*}$-normal. Let
$\mathbb{F}_{1}$ and $\mathbb{F}_{2}$ be disjoint closed subsets of
$(\mathbb{X},\tau_{H^{*}})$. Since the closed sets of
$\tau_{H^{*}}$ are exactly the $H^{*}$-closed sets,
\[
\mathbb{F}_{1},\mathbb{F}_{2}\in H^{*}C(\mathbb{X}).
\]
By $H^{*}$-normality, there exist disjoint $H^{*}$-open sets
$\mathbb{U}_{1}$ and $\mathbb{U}_{2}$ such that
\[
\mathbb{F}_{1}\subseteq\mathbb{U}_{1},
\qquad
\mathbb{F}_{2}\subseteq\mathbb{U}_{2}.
\]
Since
\[
H^{*}O(\mathbb{X})=\tau_{H^{*}},
\]
the sets $\mathbb{U}_{1}$ and $\mathbb{U}_{2}$ are
$\tau_{H^{*}}$-open. Therefore
$(\mathbb{X},\tau_{H^{*}})$ is normal.

Conversely, suppose that $(\mathbb{X},\tau_{H^{*}})$ is normal.
Let
\[
\mathbb{F}_{1},\mathbb{F}_{2}\in H^{*}C(\mathbb{X})
\]
be disjoint. Then $\mathbb{F}_{1}$ and $\mathbb{F}_{2}$ are closed
in $\tau_{H^{*}}$. Hence there exist disjoint
$\tau_{H^{*}}$-open sets $\mathbb{U}_{1}$ and $\mathbb{U}_{2}$
such that
\[
\mathbb{F}_{1}\subseteq\mathbb{U}_{1},
\qquad
\mathbb{F}_{2}\subseteq\mathbb{U}_{2}.
\]
These sets are $H^{*}$-open. Thus
$(\mathbb{X},\tau)$ is $H^{*}$-normal.
\end{proof}

\begin{definition}
\label{def:Hstar-T4}
The space $(\mathbb{X},\tau)$ is called an
\emph{$H^{*}$-$T_{4}$ space} if it is both $H^{*}$-normal and
$H^{*}$-$T_{1}$.
\end{definition}

\begin{corollary}
\label{cor:Hstar-T4-associated}
The space $(\mathbb{X},\tau)$ is $H^{*}$-$T_{4}$ if and only if
$(\mathbb{X},\tau_{H^{*}})$ is $T_{4}$.
\end{corollary}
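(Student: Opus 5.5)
The corollary follows by splitting $H^{*}$-$T_{4}$ into its two defining components and translating each separately. By Definition~\ref{def:Hstar-T4}, $(\mathbb{X},\tau)$ is $H^{*}$-$T_{4}$ exactly when it is both $H^{*}$-normal and $H^{*}$-$T_{1}$. The classical notion is decomposed the same way: we take $T_{4}$ to mean normal together with $T_{1}$, mirroring the convention already used for $T_{3}$ in Corollary~\ref{cor:Hstar-T3-associated}.

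Throughout we assume the standing hypothesis of this section, namely that the equivalent conditions of Theorem~\ref{thm:Hstar-topology-characterization} hold, so that $\tau_{H^{*}}=H^{*}O(\mathbb{X})$ is an ordinary topology. Under this hypothesis the two components translate as follows:
\begin{enumerate}[label=(\roman*)]
    \item Theorem~\ref{thm:Hstar-normal-associated} gives that $(\mathbb{X},\tau)$ is $H^{*}$-normal if and only if $(\mathbb{X},\tau_{H^{*}})$ is normal.
    \item Theorem~\ref{thm:Hstar-T1-associated}, equivalently Corollary~\ref{cor:Hstar-Ti-associated} with $i=1$, gives that $(\mathbb{X},\tau)$ is $H^{*}$-$T_{1}$ if and only if $(\mathbb{X},\tau_{H^{*}})$ is $T_{1}$.
\end{enumerate}
Taking the conjunction of these two equivalences yields
\[
(\mathbb{X},\tau)\text{ is }H^{*}\text{-}T_{4}
\quad\Longleftrightarrow\quad
(\mathbb{X},\tau_{H^{*}})\text{ is normal and }T_{1},
\]
and the right-hand side is precisely the statement that $(\mathbb{X},\tau_{H^{*}})$ is $T_{4}$.

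There is no genuine obstacle here. The only point needing care is the convention for $T_{4}$, which must be read as normal plus $T_{1}$ (and not as normal alone, or normal plus Hausdorff). Even under the normal-plus-Hausdorff reading the result survives. In a normal space every singleton is closed by $T_{1}$, so two distinct points form disjoint closed sets; normality then separates them by disjoint open sets, and the space is $T_{2}$. Hence either reading gives the same class of spaces, and the equivalence above stands.
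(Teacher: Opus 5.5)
Your proof is correct and matches the paper's, which likewise obtains the corollary by conjoining Theorem~\ref{thm:Hstar-normal-associated} with Theorem~\ref{thm:Hstar-T1-associated}, reading $T_{4}$ as normal together with $T_{1}$. Your closing remark that the normal-plus-Hausdorff reading gives the same class is also right, since a normal $T_{1}$ space is Hausdorff, though the proof does not need it.
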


\begin{proof}
This follows from
Theorem~\ref{thm:Hstar-normal-associated} and
Theorem~\ref{thm:Hstar-T1-associated}.
\end{proof}

\subsection{Complete Regularity and the $H^{*}$-Framework}
\label{subsec:Hstar-complete-regularity}

We next formulate complete regularity relative to the associated
$H^{*}$-topology.

\begin{definition}
\label{def:Hstar-completely-regular}
A topological space $(\mathbb{X},\tau)$ is said to be
\emph{$H^{*}$-completely regular} if, whenever
$\mathbb{F}\in H^{*}C(\mathbb{X})$ and
$k\in\mathbb{X}\setminus\mathbb{F}$, there exists a function
\[
f:\mathbb{X}\longrightarrow[0,1]
\]
which is continuous from
$(\mathbb{X},\tau_{H^{*}})$ to $[0,1]$ endowed with its usual
topology and satisfies
\[
f(k)=0
\qquad\text{and}\qquad
f(\mathbb{F})=\{1\}.
\]
\end{definition}

\begin{theorem}
\label{thm:Hstar-completely-regular-associated}
The space $(\mathbb{X},\tau)$ is $H^{*}$-completely regular if and
only if $(\mathbb{X},\tau_{H^{*}})$ is completely regular.
\end{theorem}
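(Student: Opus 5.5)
The plan is to argue exactly as in Theorem~\ref{thm:Hstar-regular-associated}, unwinding both definitions into the same condition. Throughout we are under the standing hypothesis of this section, so $\tau_{H^{*}}=H^{*}O(\mathbb{X})$ is an ordinary topology. Its closed sets are precisely the members of $H^{*}C(\mathbb{X})$; this is the observation used in Corollary~\ref{cor:Hstar-associated-closure}. Recall that ordinary complete regularity of $(\mathbb{X},\tau_{H^{*}})$ means the following: for every $\tau_{H^{*}}$-closed set $\mathbb{F}$ and every $k\in\mathbb{X}\setminus\mathbb{F}$, there is a continuous $f:(\mathbb{X},\tau_{H^{*}})\to[0,1]$ with $f(k)=0$ and $f(\mathbb{F})=\{1\}$.

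For the forward direction, assume $(\mathbb{X},\tau)$ is $H^{*}$-completely regular. Take a $\tau_{H^{*}}$-closed set $\mathbb{F}$ and a point $k\notin\mathbb{F}$. Since $\mathbb{F}$ is $\tau_{H^{*}}$-closed, $\mathbb{X}\setminus\mathbb{F}\in\tau_{H^{*}}=H^{*}O(\mathbb{X})$, and hence $\mathbb{F}\in H^{*}C(\mathbb{X})$. Definition~\ref{def:Hstar-completely-regular} then supplies a function $f$ that is continuous from $(\mathbb{X},\tau_{H^{*}})$ to $[0,1]$, with $f(k)=0$ and $f(\mathbb{F})=\{1\}$. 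This is exactly the required separating function, so $(\mathbb{X},\tau_{H^{*}})$ is completely regular.

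For the converse, assume $(\mathbb{X},\tau_{H^{*}})$ is completely regular. Let $\mathbb{F}\in H^{*}C(\mathbb{X})$ and $k\notin\mathbb{F}$. Then $\mathbb{F}$ is $\tau_{H^{*}}$-closed. Complete regularity gives a $\tau_{H^{*}}$-continuous $f$ into $[0,1]$ with $f(k)=0$ and $f(\mathbb{F})=\{1\}$, and this $f$ is precisely what Definition~\ref{def:Hstar-completely-regular} demands.

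There is no real obstacle. The only point needing care is that the continuity in Definition~\ref{def:Hstar-completely-regular} is already stated relative to $\tau_{H^{*}}$, not $\tau$. It therefore coincides verbatim with the continuity in the classical definition, and no translation of continuity is needed. One may also note that some authors build the $T_{1}$ axiom into complete regularity. If that convention is intended, the $T_{1}$ parts are matched separately by Theorem~\ref{thm:Hstar-T1-associated}.
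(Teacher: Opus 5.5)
Your proposal is correct and follows the paper's proof. Both identify the $\tau_{H^{*}}$-closed sets with $H^{*}C(\mathbb{X})$ and observe that the continuity in Definition~\ref{def:Hstar-completely-regular} is already taken with respect to $\tau_{H^{*}}$, so the two point--closed-set conditions coincide verbatim; you simply write out the two directions that the paper compresses into a single identification.
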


\begin{proof}
The $\tau_{H^{*}}$-closed subsets of $\mathbb{X}$ are precisely the
members of $H^{*}C(\mathbb{X})$. Moreover, the continuity required
in Definition~\ref{def:Hstar-completely-regular} is exactly
continuity with respect to $\tau_{H^{*}}$. Therefore the defining
point--closed-set separation condition for $H^{*}$-complete
regularity is identical to the classical complete regularity
condition in $(\mathbb{X},\tau_{H^{*}})$.
\end{proof}

\begin{definition}
\label{def:Hstar-Tychonoff}
An $H^{*}$-completely regular $H^{*}$-$T_{1}$ space is called an
\emph{$H^{*}$-Tychonoff space}.
\end{definition}

\begin{corollary}
\label{cor:Hstar-Tychonoff-associated}
The space $(\mathbb{X},\tau)$ is $H^{*}$-Tychonoff if and only if
$(\mathbb{X},\tau_{H^{*}})$ is Tychonoff.
\end{corollary}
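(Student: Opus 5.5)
The corollary is obtained by splitting the Tychonoff condition into its two defining components and translating each one separately. By Definition~\ref{def:Hstar-Tychonoff}, $(\mathbb{X},\tau)$ is $H^{*}$-Tychonoff exactly when it is both $H^{*}$-completely regular and $H^{*}$-$T_{1}$. In the classical sense, $(\mathbb{X},\tau_{H^{*}})$ is Tychonoff exactly when it is completely regular and $T_{1}$. Throughout, I work under the standing hypothesis of this section: the equivalent conditions of Theorem~\ref{thm:Hstar-topology-characterization} hold, so $\tau_{H^{*}}=H^{*}O(\mathbb{X})$ is an ordinary topology.

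The proof has two steps.
\begin{enumerate}[label=(\arabic*)]
\item Theorem~\ref{thm:Hstar-completely-regular-associated} gives that $(\mathbb{X},\tau)$ is $H^{*}$-completely regular if and only if $(\mathbb{X},\tau_{H^{*}})$ is completely regular.
\item Theorem~\ref{thm:Hstar-T1-associated} gives that $(\mathbb{X},\tau)$ is $H^{*}$-$T_{1}$ if and only if $(\mathbb{X},\tau_{H^{*}})$ is $T_{1}$.
\end{enumerate}
Taking the conjunction of these two equivalences yields the claimed biconditional, in the same way Corollaries~\ref{cor:Hstar-T3-associated} and~\ref{cor:Hstar-T4-associated} were obtained.

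The only point needing care is the choice of convention. The Tychonoff notion used for $(\mathbb{X},\tau_{H^{*}})$ must be ``completely regular plus $T_{1}$'', matching Definition~\ref{def:Hstar-Tychonoff}. If the reader's convention is ``completely regular plus $T_{0}$'', I would add a remark that the two conventions agree for completely regular spaces. The reason is that complete regularity makes a $T_{0}$ space $T_{1}$: given distinct points, the separating $\tau_{H^{*}}$-open set determines a closed set missing one of the points, and the separating function then gives open neighbourhoods in both directions. With this remark, Corollary~\ref{cor:Hstar-Ti-associated} would also cover the $T_{0}$ version. Beyond this, I expect no real obstacle.
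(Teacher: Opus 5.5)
Your proposal is correct and takes the same approach as the paper: its proof also just combines Theorem~\ref{thm:Hstar-completely-regular-associated} with Theorem~\ref{thm:Hstar-T1-associated}. Your remark on the $T_{0}$ versus $T_{1}$ convention is accurate, but it is not needed, since Definition~\ref{def:Hstar-Tychonoff} already fixes the $T_{1}$ version.
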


\begin{proof}
This follows from
Theorem~\ref{thm:Hstar-completely-regular-associated} and
Theorem~\ref{thm:Hstar-T1-associated}.
\end{proof}

\subsection{A Transfer Principle}
\label{subsec:Hstar-transfer-principle}

The preceding results are instances of a common structural
observation. We formulate this observation carefully in order to
clarify the scope of the associated-topology method.

\begin{theorem}[Transfer Principle]
\label{thm:Hstar-transfer-principle}
Suppose that
\[
\tau_{H^{*}}=H^{*}O(\mathbb{X})
\]
is an ordinary topology on $\mathbb{X}$. Then every property
formulated exclusively in terms of the members of
$H^{*}O(\mathbb{X})$, their complements, and the corresponding
closure and interior operators admits the equivalent classical
formulation in $(\mathbb{X},\tau_{H^{*}})$.
\end{theorem}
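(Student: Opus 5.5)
The plan is to prove the Transfer Principle as a statement about a vocabulary translation. Fix the dictionary sending each primitive of the $H^{*}$-language to its classical counterpart in $(\mathbb{X},\tau_{H^{*}})$:
\begin{itemize}
\item membership in $H^{*}O(\mathbb{X})$ goes to membership in $\tau_{H^{*}}$;
\item membership in $H^{*}C(\mathbb{X})$ goes to being $\tau_{H^{*}}$-closed;
\item $H^{*}\operatorname{Cl}$ goes to $\operatorname{Cl}_{\tau_{H^{*}}}$;
\item $H^{*}\operatorname{Int}$ goes to $\operatorname{Int}_{\tau_{H^{*}}}$.
\end{itemize}
The claim is then that each $H^{*}$-formulated property $P$ and its image $P'$ under this dictionary define the same class of spaces. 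I will make precise that "formulated exclusively in terms of" means a formula built from points and subsets of $\mathbb{X}$, the usual set-theoretic operations and relations, quantifiers over points, subsets and families of subsets (and, where needed, maps into auxiliary spaces such as $[0,1]$), and these four primitives.

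First, I verify that each primitive has the same extension as its counterpart. For open sets this is the definition $\tau_{H^{*}}=H^{*}O(\mathbb{X})$ (Definition~\ref{def:associated-Hstar-topology}). For closed sets, $\mathbb{F}$ is $\tau_{H^{*}}$-closed iff $\mathbb{X}\setminus\mathbb{F}\in\tau_{H^{*}}=H^{*}O(\mathbb{X})$ iff $\mathbb{F}\in H^{*}C(\mathbb{X})$; this is Proposition~\ref{prop:Gamma-Hstar-closed} applied with $\Gamma_{H^{*}}=\tau_{H^{*}}$. For the operators, Corollary~\ref{cor:Hstar-associated-closure} gives $H^{*}\operatorname{Cl}=\operatorname{Cl}_{\tau_{H^{*}}}$ and $H^{*}\operatorname{Int}=\operatorname{Int}_{\tau_{H^{*}}}$ as functions on $\mathcal{P}(\mathbb{X})$.

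Second, I run a structural induction on the construction of the formula, with the induction hypothesis that the $H^{*}$-formula and its translate have the same truth value for every assignment of points, sets and families to the free variables. The atomic cases are exactly the four identities above, together with set-theoretic atoms, which are left unchanged by the translation. Boolean connectives preserve equivalence trivially. For quantifiers, the key point is that both formulas quantify over the same underlying domains (the same $\mathbb{X}$ and the same $\mathcal{P}(\mathbb{X})$), so equivalence for every witness passes to the quantified formula. Derived notions such as continuity from $(\mathbb{X},\tau_{H^{*}})$, compactness, or $D$-sets are definable from these primitives and so are covered automatically; the earlier Theorems~\ref{thm:Hstar-T0-associated}--\ref{thm:Hstar-completely-regular-associated} and Theorem~\ref{thm:Hstar-D-associated} then appear as special instances.

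The main obstacle is not the induction, which is routine, but making the informal hypothesis "formulated exclusively in terms of" precise enough to be provable without becoming vacuous. In particular, the formulation must exclude hidden references to the original topology $\tau$ (for example, to $h\text{-}\operatorname{cl}$ or to $hCg$-open sets), since these are not determined by $H^{*}O(\mathbb{X})$. I will state this restriction explicitly as the admissible language before the induction, and remark that it is exactly why properties mixing $\tau$ and $H^{*}O(\mathbb{X})$ fall outside the principle.
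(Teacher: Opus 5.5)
Your proposal is correct and follows the paper's proof: both rest on identifying $H^{*}O(\mathbb{X})$ with $\tau_{H^{*}}$ (by definition), $H^{*}C(\mathbb{X})$ with the $\tau_{H^{*}}$-closed sets (by complementation), and $H^{*}\operatorname{Cl}$, $H^{*}\operatorname{Int}$ with $\operatorname{Cl}_{\tau_{H^{*}}}$, $\operatorname{Int}_{\tau_{H^{*}}}$ (Corollary~\ref{cor:Hstar-associated-closure}). Your structural induction over formulas, and your explicit exclusion of any reference to $\tau$ itself, simply make rigorous the paper's one-line ``consequently, a property expressed solely through these objects has the same formulation after replacing'' step, which the paper leaves informal.
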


\begin{proof}
By Definition~\ref{def:associated-Hstar-topology},
\[
\tau_{H^{*}}=H^{*}O(\mathbb{X}).
\]
Thus the open subsets of $(\mathbb{X},\tau_{H^{*}})$ are precisely
the $H^{*}$-open subsets of $\mathbb{X}$. Taking complements shows
that the $\tau_{H^{*}}$-closed sets are precisely the members of
$H^{*}C(\mathbb{X})$.

Furthermore, by
Corollary~\ref{cor:Hstar-associated-closure}, for every
$\mathbb{A}\subseteq\mathbb{X}$,
\[
H^{*}\operatorname{Cl}(\mathbb{A})
=
\operatorname{Cl}_{\tau_{H^{*}}}(\mathbb{A})
\]
and
\[
H^{*}\operatorname{Int}(\mathbb{A})
=
\operatorname{Int}_{\tau_{H^{*}}}(\mathbb{A}).
\]
Consequently, a property expressed solely through these objects has
the same formulation after replacing the $H^{*}$-open,
$H^{*}$-closed, $H^{*}$-closure, and $H^{*}$-interior structures
by their corresponding objects in
$(\mathbb{X},\tau_{H^{*}})$.
\end{proof}

\begin{remark}
\label{rem:transfer-limitation}
Theorem~\ref{thm:Hstar-transfer-principle} is an identification
principle and should not be interpreted as asserting that
$(\mathbb{X},\tau_{H^{*}})$ automatically satisfies any particular
classical separation axiom. For example, regularity, complete
regularity, normality, or perfect normality must still be established
from appropriate additional hypotheses.

Likewise, stronger separation conclusions proved for other
generalized-topological constructions cannot be transferred to the
present $H^{*}$-framework merely because
$H^{*}O(\mathbb{X})$ forms a topology. Such conclusions require
their own hypotheses and proofs.
\end{remark}

\section{Examples and Structural Consequences}
\label{sec:examples}

The preceding results show that the structure determined by
$H^{*}$-open sets may differ substantially from the original
topology. In particular, even when the original space has weak
separation properties, the associated topology
$\tau_{H^{*}}$, whenever it exists, may be considerably finer.

The following examples, which arise naturally in the study of
$H^{*}$-open and $H^{*}$-difference structures, are revisited here
to illustrate this phenomenon in the present generalized-topological
framework.

\subsection{A Finite Example}
\label{subsec:finite-example}

\begin{example}
\label{ex:finite-discrete-Hstar}
Let
\[
\mathbb{X}=\{k,l,m\}
\]
and consider the topology
\[
\tau=
\left\{
\varnothing,\mathbb{X},\{k,l\}
\right\}.
\]
For this space, every subset of $\mathbb{X}$ is $h$-open as well
as $h$-closed. Consequently,
\[
h\text{-}\operatorname{Cl}(\mathbb{A})=\mathbb{A}
\]
for every $\mathbb{A}\subseteq\mathbb{X}$.

It follows from the definition of an $H^{*}$-closed set that every
subset of $\mathbb{X}$ is $H^{*}$-closed. Hence
\[
H^{*}C(\mathbb{X})=\mathcal{P}(\mathbb{X}),
\]
and, by taking complements,
\[
H^{*}O(\mathbb{X})=\mathcal{P}(\mathbb{X}).
\]
Therefore,
\[
\tau_{H^{*}}
=
H^{*}O(\mathbb{X})
=
\mathcal{P}(\mathbb{X}),
\]
which is the discrete topology on $\mathbb{X}$.
\end{example}

\begin{proposition}
\label{prop:finite-example-separation}
For the space in Example~\ref{ex:finite-discrete-Hstar},
$(\mathbb{X},\tau_{H^{*}})$ is $T_{0}$, $T_{1}$, Hausdorff,
regular, completely regular, and normal.
\end{proposition}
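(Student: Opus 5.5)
The plan is to use the fact, established in Example~\ref{ex:finite-discrete-Hstar}, that $\tau_{H^{*}}=H^{*}O(\mathbb{X})=\mathcal{P}(\mathbb{X})$ is the discrete topology on $\mathbb{X}=\{k,l,m\}$. Each listed property will then be checked directly for the discrete topology, where every subset is simultaneously open and closed. Since $\mathcal{P}(\mathbb{X})$ is trivially closed under arbitrary intersections and finite unions, the hypotheses of Theorem~\ref{thm:Hstar-topology-characterization} hold. The associated space is therefore a genuine ordinary topology, and the results of Sections~\ref{sec:separation} and~\ref{sec:higher-separation} are available if one prefers to phrase the conclusion in $H^{*}$-terms.

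First, the point-separation axioms. For distinct points $x,y\in\mathbb{X}$ the singletons $\{x\}$ and $\{y\}$ are disjoint $\tau_{H^{*}}$-open sets containing $x$ and $y$ respectively. Hence $(\mathbb{X},\tau_{H^{*}})$ is Hausdorff. By the implication chain of Proposition~\ref{prop:Hstar-separation-chain}, read through Corollary~\ref{cor:Hstar-Ti-associated}, it is also $T_{1}$ and $T_{0}$. Alternatively, all singletons are $H^{*}$-closed, so Theorem~\ref{thm:Hstar-T1-singletons} gives $T_{1}$ directly.

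Next, regularity and normality. Let $\mathbb{F}$ be closed and $x\notin\mathbb{F}$. Take $\mathbb{U}=\{x\}$ and $\mathbb{V}=\mathbb{F}$; both are open in the discrete topology and they are disjoint. For disjoint closed sets $\mathbb{F}_{1},\mathbb{F}_{2}$, take the sets themselves as the separating open sets. Finally, for complete regularity, given closed $\mathbb{F}$ and $x\notin\mathbb{F}$, define $f$ to be the characteristic function of $\mathbb{F}$, so that $f(x)=0$ and $f(\mathbb{F})=\{1\}$. Every function on a discrete space is continuous, so $f$ is continuous into $[0,1]$. Theorems~\ref{thm:Hstar-regular-associated}, \ref{thm:Hstar-normal-associated} and~\ref{thm:Hstar-completely-regular-associated} then translate these statements into the corresponding $H^{*}$-properties if desired.

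There is no real obstacle. The only substantive point is the identification $\tau_{H^{*}}=\mathcal{P}(\mathbb{X})$, which rests on the claim in Example~\ref{ex:finite-discrete-Hstar} that every subset is $h$-closed, so that $h\text{-}\operatorname{Cl}(\mathbb{A})=\mathbb{A}$. That claim is taken as given from the example; all remaining steps are the standard verifications for a discrete space.
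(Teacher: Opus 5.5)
Your route is the paper's route. Its proof only observes that $\tau_{H^{*}}=\mathcal{P}(\mathbb{X})$ is discrete and cites the standard properties of discrete spaces. Your explicit checks (disjoint singletons, $\{x\}$ against $\mathbb{F}$, the disjoint closed sets themselves, the characteristic function of $\mathbb{F}$) are all correct for a discrete space. The problem is the one step you took on faith from Example~\ref{ex:finite-discrete-Hstar}, which the paper also takes on faith. It fails when checked against the definitions of Section~\ref{sec:preliminaries}, so neither argument proves the statement.

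In $\tau=\{\varnothing,\mathbb{X},\{k,l\}\}$ the closed sets are $\varnothing,\{m\},\mathbb{X}$. The semi-open sets are $\varnothing,\{k,l\},\mathbb{X}$ and the semi-closed sets are $\varnothing,\{m\},\mathbb{X}$, so $s\text{-}\operatorname{cl}(\{k\})=\mathbb{X}$.

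The only semi-open superset of $\{l,m\}$ is $\mathbb{X}$, so $\{l,m\}$ is $w$-closed and $\{k\}$ is $w$-open. Taking $\mathbb{U}=\{k\}$ in Definition~\ref{def:generalized-closed-sets}(vii) shows that $\{k\}$ is \emph{not} $h$-closed, contrary to what the example asserts.

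Carrying the computation through:
\begin{enumerate}[label=(\roman*)]
\item the $w$-open sets are $\varnothing,\{k\},\{l\},\{k,l\},\mathbb{X}$;
\item the $h$-closed sets are $\varnothing,\{m\},\{k,m\},\{l,m\},\mathbb{X}$;
\item $h\text{-}\operatorname{cl}(\{k\})=\{k,m\}$.
\end{enumerate}
Because $\{l,m\}$ equals its own $h$-closure, it is $hCg$-closed, so $\{k\}$ is $hCg$-open. Then $h\text{-}\operatorname{cl}(\{k\})\not\subseteq\{k\}$ gives $\{k\}\notin H^{*}C(\mathbb{X})$. In fact
\[
H^{*}O(\mathbb{X})=\{\varnothing,\{k\},\{l\},\{k,l\},\mathbb{X}\}.
\]
This is a topology, but its only member containing $m$ is $\mathbb{X}$. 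Hence $(\mathbb{X},\tau_{H^{*}})$ is $T_{0}$ but not $T_{1}$, since the singleton $\{k\}$ is not closed. It is therefore not Hausdorff, not regular (the point $k$ cannot be separated from the closed set $\{l,m\}$), and not completely regular. Normality holds only vacuously, because every nonempty closed set contains $m$.

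So what is missing is not a topological argument but a correct computation of $H^{*}O(\mathbb{X})$. With the paper's own definitions, that computation refutes the proposition for this space.
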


\begin{proof}
Since
\[
\tau_{H^{*}}=\mathcal{P}(\mathbb{X}),
\]
the associated space is discrete. Hence every subset of
$\mathbb{X}$ is both open and closed. The asserted separation
properties follow from the corresponding standard properties of a
discrete topological space.
\end{proof}

The original topology has a different separation behaviour.

\begin{proposition}
\label{prop:finite-original-not-T0}
The original space $(\mathbb{X},\tau)$ of
Example~\ref{ex:finite-discrete-Hstar} is not $T_{0}$.
\end{proposition}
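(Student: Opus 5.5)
The plan is to exhibit a specific pair of distinct points that no member of $\tau$ separates. Recall that $(\mathbb{X},\tau)$ is $T_{0}$ exactly when, for every pair of distinct points, some open set contains one of them but not the other. To refute this, it suffices to find one pair $k\neq l$ such that every $\mathbb{U}\in\tau$ satisfies $k\in\mathbb{U}\Leftrightarrow l\in\mathbb{U}$.

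I will take the pair $k,l$ and check the three members of
\[
\tau=\left\{\varnothing,\mathbb{X},\{k,l\}\right\}
\]
one by one:
\begin{enumerate}[label=(\roman*)]
    \item $\varnothing$ contains neither $k$ nor $l$;
    \item $\mathbb{X}$ contains both $k$ and $l$;
    \item $\{k,l\}$ contains both $k$ and $l$.
\end{enumerate}
Hence no open set contains exactly one of the two points, and $(\mathbb{X},\tau)$ fails to be $T_{0}$. As a remark, the pairs $\{k,m\}$ and $\{l,m\}$ are separated by $\{k,l\}$, so the failure comes solely from the indistinguishable pair $k,l$.

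There is no genuine obstacle here. The only care needed is to check every member of the (finite) topology rather than just the proper ones. It is also worth stressing, in view of Proposition~\ref{prop:finite-example-separation}, that $\{k\}$ is $H^{*}$-open but not $\tau$-open. This contrast shows how $\tau_{H^{*}}$ can be strictly finer than $\tau$ and can separate points that $\tau$ does not.
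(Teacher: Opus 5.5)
Your proof is correct and is essentially the paper's argument: both take the pair $k,l$ and observe that no member of $\tau$ contains exactly one of them. The only difference is that you check $\varnothing$ and $\mathbb{X}$ explicitly, whereas the paper just notes that the unique nonempty proper open set $\{k,l\}$ contains both points.
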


\begin{proof}
Consider the distinct points $k,l\in\mathbb{X}$. The only nonempty
proper open set is
\[
\{k,l\},
\]
which contains both $k$ and $l$. Thus there is no open set in
$\tau$ containing exactly one of these two points. Therefore
$(\mathbb{X},\tau)$ is not $T_{0}$.
\end{proof}

\begin{remark}
\label{rem:finite-separation-improvement}
Example~\ref{ex:finite-discrete-Hstar} shows that the separation
properties of the associated $H^{*}$-topology need not reflect
those of the original topology. Indeed,
\[
(\mathbb{X},\tau)\text{ is not }T_{0},
\]
whereas
\[
(\mathbb{X},\tau_{H^{*}})
\]
is discrete.

In view of
Corollary~\ref{cor:Hstar-Ti-associated}, the original
$H^{*}$-structure is therefore $H^{*}$-$T_{0}$,
$H^{*}$-$T_{1}$, and $H^{*}$-$T_{2}$, even though the original
topology $\tau$ is not $T_{0}$.
\end{remark}

\subsection{An Infinite Example}
\label{subsec:infinite-example}

\begin{example}
\label{ex:infinite-discrete-Hstar}
Let $\mathbb{A}$ be a set with
\[
|\mathbb{A}|\geq 2,
\]
let $p\notin\mathbb{A}$, and put
\[
\mathbb{X}=\mathbb{A}\cup\{p\}.
\]
Consider the topology
\[
\tau=
\left\{
\varnothing,\mathbb{A},\mathbb{X}
\right\}.
\]

For this space, every subset of $\mathbb{X}$ is $h$-open and
$h$-closed. Consequently,
\[
h\text{-}\operatorname{Cl}(\mathbb{B})=\mathbb{B}
\]
for every $\mathbb{B}\subseteq\mathbb{X}$.

It follows from the defining condition of $H^{*}$-closed sets that
every subset of $\mathbb{X}$ is $H^{*}$-closed. Therefore,
\[
H^{*}C(\mathbb{X})
=
\mathcal{P}(\mathbb{X})
\]
and hence
\[
H^{*}O(\mathbb{X})
=
\mathcal{P}(\mathbb{X}).
\]
Thus
\[
\tau_{H^{*}}
=
\mathcal{P}(\mathbb{X}),
\]
so the associated $H^{*}$-topology is discrete.
\end{example}

\begin{proposition}
\label{prop:infinite-original-not-T0}
The original space $(\mathbb{X},\tau)$ in
Example~\ref{ex:infinite-discrete-Hstar} is not $T_{0}$.
\end{proposition}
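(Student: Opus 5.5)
Since $|\mathbb{A}|\geq 2$, we may choose two distinct points $a,b\in\mathbb{A}$. The plan is to show that no member of $\tau$ separates $a$ from $b$, exactly as in Proposition~\ref{prop:finite-original-not-T0}.

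We go through the three open sets of
\[
\tau=\{\varnothing,\mathbb{A},\mathbb{X}\}
\]
one at a time. The empty set contains neither $a$ nor $b$. The set $\mathbb{A}$ contains both, because $a,b\in\mathbb{A}$. The set $\mathbb{X}$ also contains both. Hence every open set of $\tau$ either contains both points or contains neither. So there is no open set in $\tau$ containing exactly one of $a$ and $b$, and $(\mathbb{X},\tau)$ fails the $T_{0}$ condition at this pair.

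There is no real obstacle here. The one point to watch is that the hypothesis $|\mathbb{A}|\geq 2$ is used essentially: it guarantees two distinct points inside $\mathbb{A}$. Choosing the pair $a,p$ instead would not work, since $\mathbb{A}$ separates those two points.

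As in Remark~\ref{rem:finite-separation-improvement}, we would close by contrasting this with Example~\ref{ex:infinite-discrete-Hstar}. There $\tau_{H^{*}}$ is discrete, so by Corollary~\ref{cor:Hstar-Ti-associated} the space is $H^{*}$-$T_{i}$ for $i\in\{0,1,2\}$, even though $\tau$ itself is not $T_{0}$.
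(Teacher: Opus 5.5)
Your proof is correct and follows essentially the paper's argument: use $|\mathbb{A}|\geq 2$ to pick two distinct points of $\mathbb{A}$, then note that every member of $\tau=\{\varnothing,\mathbb{A},\mathbb{X}\}$ contains both points or neither. Your closing contrast with $\tau_{H^{*}}$ is not needed for the proposition, and it depends on the example's unproved assertion that $\tau_{H^{*}}$ is discrete.
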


\begin{proof}
Since $|\mathbb{A}|\geq2$, choose distinct points
$k,l\in\mathbb{A}$. The only open sets containing either of these
points are
\[
\mathbb{A}
\qquad\text{and}\qquad
\mathbb{X},
\]
and both contain $k$ and $l$. Hence no open set contains exactly
one of these two points. Therefore
$(\mathbb{X},\tau)$ is not $T_{0}$.
\end{proof}

\begin{corollary}
\label{cor:infinite-Hstar-separation}
For the space in Example~\ref{ex:infinite-discrete-Hstar},
$(\mathbb{X},\tau_{H^{*}})$ is $T_{0}$, $T_{1}$, Hausdorff,
regular, completely regular, and normal.
\end{corollary}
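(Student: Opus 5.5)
The plan is to mirror the argument of Proposition~\ref{prop:finite-example-separation}. Everything follows from the identification established in Example~\ref{ex:infinite-discrete-Hstar}, namely
\[
\tau_{H^{*}}=H^{*}O(\mathbb{X})=\mathcal{P}(\mathbb{X}).
\]
So $(\mathbb{X},\tau_{H^{*}})$ is the discrete space on $\mathbb{X}$. There is no dependence on the cardinality of $\mathbb{A}$, so the infinite case raises no new difficulty. I would first note explicitly that $\mathcal{P}(\mathbb{X})$ is closed under arbitrary intersections and finite unions. Hence the standing hypotheses of Theorem~\ref{thm:Hstar-strong-generalized-topology} and Theorem~\ref{thm:Hstar-topology-characterization} hold, and $\tau_{H^{*}}$ is legitimately defined in the sense of Definition~\ref{def:associated-Hstar-topology}.

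Next I would verify each separation property directly in the discrete space, using singletons as the witnesses.
\begin{enumerate}[label=(\roman*)]
\item \emph{$T_0$, $T_1$ and Hausdorff.} For distinct $k,l$, the sets $\{k\}$ and $\{l\}$ are disjoint open sets containing $k$ and $l$ respectively. This gives Hausdorff, and Proposition~\ref{prop:Hstar-separation-chain}, read in $\tau_{H^{*}}$, then gives $T_1$ and $T_0$.
\item \emph{Regularity.} For a closed set $\mathbb{F}$ and a point $k\notin\mathbb{F}$, take $\mathbb{U}=\{k\}$ and $\mathbb{V}=\mathbb{F}$. Both are open, and they are disjoint.
\item \emph{Normality.} Disjoint closed sets $\mathbb{F}_1$ and $\mathbb{F}_2$ are themselves disjoint open neighbourhoods of themselves.
\item \emph{Complete regularity.} Use the characteristic function $f=\chi_{\mathbb{F}}$. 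It satisfies $f(k)=0$ and $f(\mathbb{F})=\{1\}$, and it is continuous because every function out of a discrete space is continuous.
\end{enumerate}

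Finally, I would record the consequence via Corollary~\ref{cor:Hstar-Ti-associated}, Theorem~\ref{thm:Hstar-regular-associated}, Theorem~\ref{thm:Hstar-normal-associated} and Theorem~\ref{thm:Hstar-completely-regular-associated}. Together these show that $(\mathbb{X},\tau)$ is $H^{*}$-$T_i$, $H^{*}$-regular, $H^{*}$-normal and $H^{*}$-completely regular. This holds even though Proposition~\ref{prop:infinite-original-not-T0} shows that $\tau$ is not $T_0$.

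The only step needing any care is the justification, already asserted in the example, that every subset is $h$-closed. In particular, the claim $h\text{-}\operatorname{Cl}(\mathbb{B})=\mathbb{B}$ is what makes $H^{*}C(\mathbb{X})=\mathcal{P}(\mathbb{X})$. I would take this from the example as given rather than re-derive it, so I expect no real obstacle.
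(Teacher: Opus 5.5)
Your route is the paper's route: its entire proof is that $\tau_{H^{*}}=\mathcal{P}(\mathbb{X})$ is discrete, and your checks inside a discrete space are fine. The gap is the step you chose to take on trust, and it fails. Under Definition~\ref{def:generalized-closed-sets} it is \emph{not} true that every subset of $\mathbb{X}$ is $h$-closed, so $\tau_{H^{*}}$ is not discrete, and your proof and the paper's both collapse. The check is short. For $\tau=\{\varnothing,\mathbb{A},\mathbb{X}\}$ the only semi-open sets are $\varnothing,\mathbb{A},\mathbb{X}$. Hence the semi-closed sets are $\varnothing,\{p\},\mathbb{X}$, and $s\text{-}\operatorname{cl}(\mathbb{B})=\mathbb{X}$ whenever $\mathbb{B}\cap\mathbb{A}\neq\varnothing$. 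Testing with the semi-open set $\mathbb{A}$ shows that no nonempty $\mathbb{B}\subseteq\mathbb{A}$ is $w$-closed. Every set containing $p$ is $w$-closed, since its only semi-open superset is $\mathbb{X}$. So the $w$-open sets are $\mathbb{X}$ and the subsets of $\mathbb{A}$. A nonempty $\mathbb{B}\subseteq\mathbb{A}$ is therefore $w$-open with $s\text{-}\operatorname{cl}(\mathbb{B})=\mathbb{X}\not\subseteq\mathbb{B}$, so it is not $h$-closed. Already $\mathbb{A}$ itself, open but not semi-closed, refutes the claim. In fact the $h$-closed sets are exactly $\varnothing$ and the sets containing $p$, and $h\text{-}\operatorname{cl}(\mathbb{B})=\mathbb{B}\cup\{p\}$ for every $\mathbb{B}\neq\varnothing$.

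Now push this through. Sets containing $p$ are $h$-closed, hence $hCg$-closed, so every subset of $\mathbb{A}$ is $hCg$-open. For nonempty $\mathbb{B}\subseteq\mathbb{A}$, the choice $\mathbb{U}=\mathbb{B}$ gives $h\text{-}\operatorname{cl}(\mathbb{B})\not\subseteq\mathbb{B}$, so $\mathbb{B}\notin H^{*}C(\mathbb{X})$. On the other side, $\mathbb{A}=\mathbb{A}\cap\mathbb{X}$ is a $C$-set, so no nonempty subset of $\mathbb{A}$ is $hCg$-closed. Thus $\mathbb{X}$ is the only $hCg$-open set containing $p$, and every set containing $p$ is $H^{*}$-closed. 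Hence $H^{*}O(\mathbb{X})=\mathcal{P}(\mathbb{A})\cup\{\mathbb{X}\}$. The associated topology does exist, but it is the excluded-point topology at $p$, not the discrete one. Its only open set containing $p$ is $\mathbb{X}$. So $(\mathbb{X},\tau_{H^{*}})$ is $T_{0}$, and it is vacuously normal because any two nonempty closed sets meet at $p$. It is not $T_{1}$ and not Hausdorff. It is not regular, since a point of $\mathbb{A}$ cannot be separated from the closed set $\{p\}$, and hence it is not completely regular. So four of the six assertions are false, and no argument can prove the corollary as stated. The same defect affects Example~\ref{ex:finite-discrete-Hstar} and Proposition~\ref{prop:finite-example-separation}, which are the case $\mathbb{A}=\{k,l\}$, $p=m$. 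The step you flagged as the only one needing care carried the entire content of the claim; it needed to be checked, not imported.
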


\begin{proof}
Since
\[
\tau_{H^{*}}=\mathcal{P}(\mathbb{X}),
\]
the associated space is discrete, and the conclusion follows.
\end{proof}

\begin{remark}
\label{rem:infinite-Hstar-separation}
By the correspondence established in
Section~\ref{sec:separation} and
Section~\ref{sec:higher-separation}, the space
$(\mathbb{X},\tau)$ in
Example~\ref{ex:infinite-discrete-Hstar} satisfies the corresponding
$H^{*}$-separation conditions, although its original topology is not
even $T_{0}$.
\end{remark}

\subsection{Comparison with the Original Topology}
\label{subsec:comparison-original-Hstar}

The preceding examples motivate a general comparison between the
original topology $\tau$ and the associated topology
$\tau_{H^{*}}$.

\begin{proposition}
\label{prop:tau-subset-Hstar}
Suppose that every open subset of $(\mathbb{X},\tau)$ is
$H^{*}$-open and that $H^{*}O(\mathbb{X})$ forms an ordinary
topology. Then
\[
\tau\subseteq\tau_{H^{*}}.
\]
\end{proposition}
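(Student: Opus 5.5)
The statement is essentially a direct unpacking of definitions, so the plan is to argue elementwise. Assume the two hypotheses. Every member of $\tau$ is $H^{*}$-open, and $H^{*}O(\mathbb{X})$ is closed under finite intersections; together with the standing assumption that $H^{*}C(\mathbb{X})$ is closed under arbitrary intersections, this means the equivalent conditions of Theorem~\ref{thm:Hstar-topology-characterization} hold. Hence, by Definition~\ref{def:associated-Hstar-topology}, the symbol $\tau_{H^{*}}$ is meaningful and
\[
\tau_{H^{*}}=H^{*}O(\mathbb{X}).
\]
If one reads ``forms an ordinary topology'' directly, without passing through the arbitrary-intersection hypothesis, the same identification holds simply by the meaning of the notation.

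Next, fix an arbitrary $\mathbb{U}\in\tau$. By the first hypothesis, $\mathbb{U}$ is $H^{*}$-open, that is,
\[
\mathbb{U}\in H^{*}O(\mathbb{X}).
\]
By the identification above, $\mathbb{U}\in\tau_{H^{*}}$. Since $\mathbb{U}$ was arbitrary, this gives $\tau\subseteq\tau_{H^{*}}$.

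If desired, I would add an equivalent closed-set formulation, obtained by taking complements. Every $\tau$-closed set $\mathbb{F}$ has $\mathbb{X}\setminus\mathbb{F}\in\tau\subseteq\tau_{H^{*}}$, so $\mathbb{F}\in H^{*}C(\mathbb{X})$. Combining this with Corollary~\ref{cor:Hstar-associated-closure} yields
\[
\operatorname{Cl}_{\tau_{H^{*}}}(\mathbb{A})=H^{*}\operatorname{Cl}(\mathbb{A})\subseteq\operatorname{cl}(\mathbb{A}).
\]

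There is no genuine obstacle here. The only point needing care is to keep $\tau_{H^{*}}$ clearly distinct from $\Gamma_{H^{*}}$, as stressed in Remark~\ref{rem:Gamma-vs-tau}. The inclusion itself holds at the level of the bare families $\tau\subseteq H^{*}O(\mathbb{X})$. The topological hypothesis is needed only so that the conclusion can be stated as an inclusion of topologies, meaning that $\tau_{H^{*}}$ is finer than $\tau$.
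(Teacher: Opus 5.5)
Your proposal is correct and is the paper's argument: take $\mathbb{U}\in\tau$, use the hypothesis to get $\mathbb{U}\in H^{*}O(\mathbb{X})$, and invoke $\tau_{H^{*}}=H^{*}O(\mathbb{X})$. The detour through a standing arbitrary-intersection hypothesis is unnecessary, since an ordinary topology is closed under arbitrary unions and so $H^{*}C(\mathbb{X})$ is automatically closed under arbitrary intersections; your add-on $H^{*}\operatorname{Cl}(\mathbb{A})\subseteq\operatorname{cl}(\mathbb{A})$ is a valid extra consequence.
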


\begin{proof}
Let $\mathbb{U}\in\tau$. By hypothesis,
$\mathbb{U}\in H^{*}O(\mathbb{X})$. Since
\[
\tau_{H^{*}}=H^{*}O(\mathbb{X}),
\]
we have
\[
\mathbb{U}\in\tau_{H^{*}}.
\]
Therefore,
\[
\tau\subseteq\tau_{H^{*}}.
\]
\end{proof}

\begin{remark}
\label{rem:Hstar-finer}
Under the hypotheses of
Proposition~\ref{prop:tau-subset-Hstar}, the associated topology
$\tau_{H^{*}}$ is finer than the original topology $\tau$.
The inclusion may be strict. Indeed, in
Examples~\ref{ex:finite-discrete-Hstar} and
\ref{ex:infinite-discrete-Hstar},
\[
\tau\subsetneq\tau_{H^{*}}
=
\mathcal{P}(\mathbb{X}).
\]
Thus the associated $H^{*}$-topology may contain substantially more
open sets than the original topology.
\end{remark}

\begin{proposition}
\label{prop:separation-transfer-original}
Assume that the associated topology $\tau_{H^{*}}$ exists and that
\[
\tau\subseteq\tau_{H^{*}}.
\]
Then:
\[
(\mathbb{X},\tau)\text{ is }T_{0}
\quad\Longrightarrow\quad
(\mathbb{X},\tau)\text{ is }H^{*}\text{-}T_{0},
\]
\[
(\mathbb{X},\tau)\text{ is }T_{1}
\quad\Longrightarrow\quad
(\mathbb{X},\tau)\text{ is }H^{*}\text{-}T_{1},
\]
and
\[
(\mathbb{X},\tau)\text{ is }T_{2}
\quad\Longrightarrow\quad
(\mathbb{X},\tau)\text{ is }H^{*}\text{-}T_{2}.
\]
\end{proposition}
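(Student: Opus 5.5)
The plan is to argue directly from the inclusion $\tau\subseteq\tau_{H^{*}}$ together with the identification $\tau_{H^{*}}=H^{*}O(\mathbb{X})$ of Definition~\ref{def:associated-Hstar-topology}. The key observation is that all three separation axioms $T_0$, $T_1$, $T_2$ are \emph{monotone in the topology}: if they are witnessed by some family of open sets, they remain witnessed by the same sets in any finer topology. No structural property of $H^{*}$-sets beyond this inclusion is needed.

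Concretely, fix distinct points $k,l\in\mathbb{X}$ and treat the three cases in turn.

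For $T_0$, the hypothesis gives $\mathbb{U}\in\tau$ containing exactly one of $k,l$. Since $\mathbb{U}\in\tau\subseteq\tau_{H^{*}}=H^{*}O(\mathbb{X})$, the set $\mathbb{U}$ is $H^{*}$-open. It therefore witnesses clause (i) of Definition~\ref{def:Hstar-separation}.

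For $T_1$, take $\mathbb{U},\mathbb{V}\in\tau$ with
\[
k\in\mathbb{U},\quad l\notin\mathbb{U},\qquad l\in\mathbb{V},\quad k\notin\mathbb{V}.
\]
The same inclusion makes both sets $H^{*}$-open, which is clause (ii).

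For $T_2$, take disjoint $\mathbb{U},\mathbb{V}\in\tau$ with $k\in\mathbb{U}$ and $l\in\mathbb{V}$. Disjointness is a purely set-theoretic property and is unaffected by reinterpreting the sets as $H^{*}$-open, so clause (iii) follows.

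An alternative packaging runs through Corollary~\ref{cor:Hstar-Ti-associated}. One first notes the classical fact that $T_i$ for $(\mathbb{X},\tau)$ implies $T_i$ for any finer topology, in particular for $(\mathbb{X},\tau_{H^{*}})$. The corollary then converts this into $H^{*}$-$T_i$ for $(\mathbb{X},\tau)$.

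There is no real obstacle here. The only point requiring care is to use the existence of $\tau_{H^{*}}$ solely through the equality $\tau_{H^{*}}=H^{*}O(\mathbb{X})$. This ensures the witnesses are $H^{*}$-open in the sense of Definition~\ref{def:Hstar-separation}. In fact, the finite-intersection hypothesis is not needed for the direct argument, only the inclusion $\tau\subseteq H^{*}O(\mathbb{X})$.
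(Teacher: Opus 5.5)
Your proposal is correct and is essentially the paper's own proof: the separating open sets witnessing $T_0$, $T_1$, or $T_2$ lie in $\tau\subseteq\tau_{H^{*}}=H^{*}O(\mathbb{X})$, so they are $H^{*}$-open and witness the corresponding $H^{*}$-axiom. Your side remark is also accurate: only the inclusion $\tau\subseteq H^{*}O(\mathbb{X})$ is used, not the finite-intersection property.
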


\begin{proof}
Suppose that $(\mathbb{X},\tau)$ satisfies one of the separation
axioms $T_{0}$, $T_{1}$, or $T_{2}$. The open sets required by the
corresponding separation condition belong to $\tau$. Since
\[
\tau\subseteq\tau_{H^{*}}
=
H^{*}O(\mathbb{X}),
\]
the same sets are $H^{*}$-open. Hence the corresponding
$H^{*}$-separation condition follows.
\end{proof}

\begin{remark}
\label{rem:converse-separation-fails}
None of the converse implications in
Proposition~\ref{prop:separation-transfer-original} holds in
general. Indeed, in both
Examples~\ref{ex:finite-discrete-Hstar} and
\ref{ex:infinite-discrete-Hstar}, the original topology is not
$T_{0}$, whereas
\[
\tau_{H^{*}}=\mathcal{P}(\mathbb{X}).
\]
Hence, by
Corollary~\ref{cor:Hstar-Ti-associated}, the corresponding
$H^{*}$-structure is $H^{*}$-$T_{0}$,
$H^{*}$-$T_{1}$, and $H^{*}$-$T_{2}$.
\end{remark}

\begin{remark}
\label{rem:finer-topology-limitation}
The inclusion
\[
\tau\subseteq\tau_{H^{*}}
\]
should not be used to infer analogous implications for arbitrary
higher separation properties. In particular, regularity, complete
regularity, and normality are not transferred merely from the fact
that one topology is finer than another. Such properties must be
verified independently or through the associated-topology
correspondences established in
Section~\ref{sec:higher-separation}.
\end{remark}

\section{Compactness in the $H^{*}$-Generated Structure}
\label{sec:compactness}

We now study compactness with respect to covers by $H^{*}$-open
sets. When the associated topology exists, this notion admits a
direct interpretation in terms of ordinary compactness in
$(\mathbb{X},\tau_{H^{*}})$.

\begin{definition}
\label{def:Hstar-compact}
A subset $\mathbb{K}\subseteq\mathbb{X}$ is said to be
\emph{$H^{*}$-compact} if every cover of $\mathbb{K}$ by
$H^{*}$-open subsets of $\mathbb{X}$ admits a finite subcover.

In particular, $(\mathbb{X},\tau)$ is called
\emph{$H^{*}$-compact} if $\mathbb{X}$ is $H^{*}$-compact.
\end{definition}

The next result identifies this notion with ordinary compactness
relative to the associated topology.

\begin{theorem}
\label{thm:Hstar-compact-associated}
Suppose that
\[
\tau_{H^{*}}=H^{*}O(\mathbb{X})
\]
is an ordinary topology on $\mathbb{X}$. Then, for every
$\mathbb{K}\subseteq\mathbb{X}$,
\[
\mathbb{K}\text{ is $H^{*}$-compact}
\quad\Longleftrightarrow\quad
\mathbb{K}\text{ is compact as a subspace of }
(\mathbb{X},\tau_{H^{*}}).
\]
\end{theorem}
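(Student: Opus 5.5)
The plan is to reduce the statement to the standard fact that compactness of a subspace can be tested by covers drawn from the ambient topology. Since $\tau_{H^{*}}=H^{*}O(\mathbb{X})$ by Definition~\ref{def:associated-Hstar-topology}, an $H^{*}$-open cover of $\mathbb{K}$ in the sense of Definition~\ref{def:Hstar-compact} is literally a cover of $\mathbb{K}$ by $\tau_{H^{*}}$-open subsets of $\mathbb{X}$. The whole argument therefore amounts to comparing two kinds of cover of $\mathbb{K}$: covers by ambient $\tau_{H^{*}}$-open sets, and covers by relatively open subsets of $\mathbb{K}$ in the subspace topology $\{\mathbb{K}\cap\mathbb{W}:\mathbb{W}\in\tau_{H^{*}}\}$.

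For the forward implication, assume $\mathbb{K}$ is $H^{*}$-compact and let $\{\mathbb{G}_{\lambda}:\lambda\in\Lambda\}$ be a cover of $\mathbb{K}$ by sets open in the subspace $\mathbb{K}$.
\begin{enumerate}[label=(\alph*)]
    \item For each $\lambda$, choose $\mathbb{W}_{\lambda}\in\tau_{H^{*}}$ with $\mathbb{G}_{\lambda}=\mathbb{K}\cap\mathbb{W}_{\lambda}$.
    \item The family $\{\mathbb{W}_{\lambda}\}$ covers $\mathbb{K}$ and consists of $H^{*}$-open sets.
    \item By $H^{*}$-compactness, finitely many $\mathbb{W}_{\lambda_{1}},\dots,\mathbb{W}_{\lambda_{n}}$ already cover $\mathbb{K}$.
    \item Intersecting with $\mathbb{K}$ shows that $\mathbb{G}_{\lambda_{1}},\dots,\mathbb{G}_{\lambda_{n}}$ cover $\mathbb{K}$.
\end{enumerate}

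For the converse, assume $\mathbb{K}$ is compact as a subspace and let $\{\mathbb{U}_{\lambda}\}\subseteq H^{*}O(\mathbb{X})$ cover $\mathbb{K}$.
\begin{enumerate}[label=(\alph*)]
    \item Each $\mathbb{U}_{\lambda}$ is $\tau_{H^{*}}$-open, so each trace $\mathbb{K}\cap\mathbb{U}_{\lambda}$ is open in the subspace.
    \item These traces form an open cover of the subspace $\mathbb{K}$.
    \item Subspace compactness gives a finite subfamily of traces covering $\mathbb{K}$.
    \item The corresponding $\mathbb{U}_{\lambda_{i}}$ then form a finite subcover of $\mathbb{K}$.
\end{enumerate}

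There is no real obstacle here. The only point needing care is the step where ambient open sets are chosen to represent relatively open sets, which uses the axiom of choice over an arbitrary index set. This step is exactly where the hypothesis that $\tau_{H^{*}}$ is an ordinary topology enters: it guarantees that the subspace structure on $\mathbb{K}$ is well-defined and is described by traces of members of $H^{*}O(\mathbb{X})$. Beyond that, the proof is the usual identification, in the spirit of Theorem~\ref{thm:Hstar-transfer-principle}.
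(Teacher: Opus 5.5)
Your proof is correct and follows the paper's argument. Both directions rest on the identity $\tau_{H^{*}}=H^{*}O(\mathbb{X})$; you simply spell out the passage between relatively open subsets of $\mathbb{K}$ and ambient $\tau_{H^{*}}$-open sets, which the paper uses tacitly when it tests subspace compactness by covers from $\tau_{H^{*}}$. One small correction to your closing remark: the argument never uses closure under finite intersections, so the topology hypothesis is needed only to make the phrase ``compact as a subspace of $(\mathbb{X},\tau_{H^{*}})$'' meaningful, not to make the trace step work.
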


\begin{proof}
Suppose first that $\mathbb{K}$ is $H^{*}$-compact. Let
\[
\{\mathbb{U}_{\lambda}:\lambda\in\Lambda\}
\]
be a family of $\tau_{H^{*}}$-open subsets of $\mathbb{X}$ covering
$\mathbb{K}$. Since
\[
\tau_{H^{*}}=H^{*}O(\mathbb{X}),
\]
every $\mathbb{U}_{\lambda}$ is $H^{*}$-open. By
$H^{*}$-compactness, there exist
$\lambda_{1},\ldots,\lambda_{n}\in\Lambda$ such that
\[
\mathbb{K}
\subseteq
\bigcup_{i=1}^{n}\mathbb{U}_{\lambda_i}.
\]
Hence $\mathbb{K}$ is compact in the subspace topology inherited
from $\tau_{H^{*}}$.

Conversely, suppose that $\mathbb{K}$ is compact as a subspace of
$(\mathbb{X},\tau_{H^{*}})$. Let
\[
\{\mathbb{V}_{\mu}:\mu\in M\}
\]
be an $H^{*}$-open cover of $\mathbb{K}$. Since every
$H^{*}$-open set belongs to $\tau_{H^{*}}$, this is a
$\tau_{H^{*}}$-open cover of $\mathbb{K}$. Compactness yields a
finite subcover. Therefore $\mathbb{K}$ is $H^{*}$-compact.
\end{proof}

\begin{corollary}
\label{cor:Hstar-space-compact-associated}
Suppose that $\tau_{H^{*}}$ is an ordinary topology. Then
\[
(\mathbb{X},\tau)\text{ is $H^{*}$-compact}
\quad\Longleftrightarrow\quad
(\mathbb{X},\tau_{H^{*}})\text{ is compact}.
\]
\end{corollary}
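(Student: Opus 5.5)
The plan is to deduce this corollary directly from Theorem~\ref{thm:Hstar-compact-associated} by taking $\mathbb{K}=\mathbb{X}$. By Definition~\ref{def:Hstar-compact}, the space $(\mathbb{X},\tau)$ is $H^{*}$-compact exactly when the subset $\mathbb{X}$ is $H^{*}$-compact, that is, when every cover of $\mathbb{X}$ by $H^{*}$-open sets has a finite subcover. Theorem~\ref{thm:Hstar-compact-associated}, applied with $\mathbb{K}=\mathbb{X}$, says this holds if and only if $\mathbb{X}$ is compact as a subspace of $(\mathbb{X},\tau_{H^{*}})$.

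The only remaining point is that the subspace topology on $\mathbb{X}$ inherited from $\tau_{H^{*}}$ is $\tau_{H^{*}}$ itself. Indeed, its open sets are the sets $\mathbb{U}\cap\mathbb{X}=\mathbb{U}$ with $\mathbb{U}\in\tau_{H^{*}}$. Hence compactness of $\mathbb{X}$ as a subspace is the same as compactness of the space $(\mathbb{X},\tau_{H^{*}})$, and the stated equivalence follows.

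There is essentially no obstacle here. The one thing to handle carefully is this identification of the subspace $\mathbb{X}$ with the whole space. One could also bypass the theorem altogether: since $\tau_{H^{*}}=H^{*}O(\mathbb{X})$, the open covers of $\mathbb{X}$ in $(\mathbb{X},\tau_{H^{*}})$ are literally the same families as the $H^{*}$-open covers of $\mathbb{X}$. The two finite-subcover conditions therefore coincide word for word.
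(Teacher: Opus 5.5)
Your proposal is correct and takes the same route as the paper, which states this corollary without a separate proof as the specialization $\mathbb{K}=\mathbb{X}$ of Theorem~\ref{thm:Hstar-compact-associated}. Your remark that the subspace topology on $\mathbb{X}$ inherited from $\tau_{H^{*}}$ is $\tau_{H^{*}}$ itself supplies the one step the paper leaves implicit.
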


\subsection{Compact Sets in $H^{*}$-Hausdorff Spaces}
\label{subsec:Hstar-compact-Hausdorff}

The preceding correspondence yields the $H^{*}$-analogue of the
classical theorem that compact subsets of Hausdorff spaces are
closed.

\begin{theorem}
\label{thm:Hstar-compact-closed}
Suppose that
\[
\tau_{H^{*}}=H^{*}O(\mathbb{X})
\]
is an ordinary topology. If $(\mathbb{X},\tau)$ is
$H^{*}$-$T_{2}$ and
$\mathbb{K}\subseteq\mathbb{X}$ is $H^{*}$-compact, then
$\mathbb{K}$ is $H^{*}$-closed.
\end{theorem}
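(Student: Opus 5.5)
The plan is to transfer the statement to the associated space $(\mathbb{X},\tau_{H^{*}})$ and invoke the classical fact that compact subsets of Hausdorff spaces are closed. By Theorem~\ref{thm:Hstar-T2-associated}, the hypothesis that $(\mathbb{X},\tau)$ is $H^{*}$-$T_{2}$ means that $(\mathbb{X},\tau_{H^{*}})$ is Hausdorff. By Theorem~\ref{thm:Hstar-compact-associated}, $\mathbb{K}$ is compact as a subspace of $(\mathbb{X},\tau_{H^{*}})$. Hence $\mathbb{K}$ is $\tau_{H^{*}}$-closed. Since the $\tau_{H^{*}}$-closed sets are exactly the members of $H^{*}C(\mathbb{X})$ (complements of $\tau_{H^{*}}=H^{*}O(\mathbb{X})$), $\mathbb{K}$ is $H^{*}$-closed.

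To keep the argument self-contained, I would also give the direct covering argument in $H^{*}$-language. Fix $x\in\mathbb{X}\setminus\mathbb{K}$. For each $y\in\mathbb{K}$, $H^{*}$-$T_{2}$ gives disjoint $H^{*}$-open sets $\mathbb{U}_{y}\ni y$ and $\mathbb{V}_{y}\ni x$. The family $\{\mathbb{U}_{y}:y\in\mathbb{K}\}$ is an $H^{*}$-open cover of $\mathbb{K}$, so $H^{*}$-compactness yields $y_{1},\ldots,y_{n}$ with $\mathbb{K}\subseteq\bigcup_{i=1}^{n}\mathbb{U}_{y_i}$. Then $\mathbb{W}_{x}=\bigcap_{i=1}^{n}\mathbb{V}_{y_i}$ contains $x$ and is disjoint from $\mathbb{K}$.

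The key step is that $\mathbb{W}_{x}$ is $H^{*}$-open. This uses closure of $H^{*}O(\mathbb{X})$ under finite intersections, which holds because $\tau_{H^{*}}$ is assumed to be an ordinary topology (Theorem~\ref{thm:Hstar-topology-characterization}(ii)). Without this, Example~\ref{ex:Hstar-not-topology} shows the step would fail. The degenerate case $\mathbb{K}=\varnothing$ is covered by Proposition~\ref{prop:empty-X-Hstar}, and if $\mathbb{K}=\mathbb{X}$ there is nothing to prove.

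Finally, $\mathbb{X}\setminus\mathbb{K}=\bigcup_{x\notin\mathbb{K}}\mathbb{W}_{x}$ is a union of $H^{*}$-open sets. It is therefore $H^{*}$-open by closure under arbitrary unions (Theorem~\ref{thm:Hstar-strong-generalized-topology}), so $\mathbb{K}\in H^{*}C(\mathbb{X})$.
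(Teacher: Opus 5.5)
Your first paragraph is the paper's proof: $(\mathbb{X},\tau_{H^{*}})$ is Hausdorff by Theorem~\ref{thm:Hstar-T2-associated}, $\mathbb{K}$ is $\tau_{H^{*}}$-compact by Theorem~\ref{thm:Hstar-compact-associated}, the classical theorem makes $\mathbb{K}$ $\tau_{H^{*}}$-closed, and $\tau_{H^{*}}$-closed sets are exactly the $H^{*}$-closed sets. Your supplementary covering argument is the classical proof unwound in $H^{*}$-language and is also correct; the one blemish is the citation for arbitrary unions, since that closure follows directly from $\tau_{H^{*}}$ being a topology and so Theorem~\ref{thm:Hstar-strong-generalized-topology} is not needed.
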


\begin{proof}
By Theorem~\ref{thm:Hstar-T2-associated},
$(\mathbb{X},\tau_{H^{*}})$ is Hausdorff. Moreover, by
Theorem~\ref{thm:Hstar-compact-associated}, $\mathbb{K}$ is compact
in $(\mathbb{X},\tau_{H^{*}})$.

Every compact subset of a Hausdorff space is closed. Hence
$\mathbb{K}$ is $\tau_{H^{*}}$-closed. Since the closed subsets of
$(\mathbb{X},\tau_{H^{*}})$ are precisely the $H^{*}$-closed
subsets of $\mathbb{X}$,
\[
\mathbb{K}\in H^{*}C(\mathbb{X}).
\]
Therefore $\mathbb{K}$ is $H^{*}$-closed.
\end{proof}

\begin{corollary}
\label{cor:Hstar-compact-singletons}
Suppose that $\tau_{H^{*}}$ is an ordinary topology and
$(\mathbb{X},\tau)$ is $H^{*}$-$T_{2}$. Then every finite subset
of $\mathbb{X}$ is $H^{*}$-closed.
\end{corollary}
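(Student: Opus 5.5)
The plan is to reduce the statement to Theorem~\ref{thm:Hstar-compact-closed} by showing that every finite subset of $\mathbb{X}$ is $H^{*}$-compact. An alternative route goes through singletons and finite unions, and I will use it as a cross-check.

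First, let $\mathbb{K}=\{k_{1},\ldots,k_{n}\}\subseteq\mathbb{X}$ be finite. The case $\mathbb{K}=\varnothing$ is covered directly by Proposition~\ref{prop:empty-X-Hstar}. Let $\{\mathbb{U}_{\lambda}:\lambda\in\Lambda\}$ be any cover of $\mathbb{K}$ by $H^{*}$-open sets. For each $i$, I choose $\lambda_{i}$ with $k_{i}\in\mathbb{U}_{\lambda_{i}}$. Then
\[
\mathbb{K}\subseteq\bigcup_{i=1}^{n}\mathbb{U}_{\lambda_{i}},
\]
so $\mathbb{K}$ is $H^{*}$-compact in the sense of Definition~\ref{def:Hstar-compact}. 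This step needs no structure beyond the definition itself.

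Second, I apply Theorem~\ref{thm:Hstar-compact-closed}. Its hypotheses are that $\tau_{H^{*}}$ is an ordinary topology and that $(\mathbb{X},\tau)$ is $H^{*}$-$T_{2}$, and both are assumed in the corollary. The theorem then gives $\mathbb{K}\in H^{*}C(\mathbb{X})$, which is the claim.

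As a cross-check, the alternative argument runs as follows. By Proposition~\ref{prop:Hstar-separation-chain}, the space is $H^{*}$-$T_{1}$. Since the standing hypothesis makes $H^{*}O(\mathbb{X})$ closed under arbitrary unions, Theorem~\ref{thm:Hstar-T1-singletons} shows that singletons are $H^{*}$-closed. By Theorem~\ref{thm:Hstar-topology-characterization}(iii), $H^{*}C(\mathbb{X})$ is closed under finite unions, so induction on $n$ finishes the argument. There is no real obstacle in either route. The only point needing care is that the arbitrary-union hypothesis on $H^{*}O(\mathbb{X})$ is implicit in $\tau_{H^{*}}$ being a topology, so both routes apply as stated.
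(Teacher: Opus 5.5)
Your argument is correct and is essentially the paper's: the paper likewise shows that finite sets are $H^{*}$-compact and then applies Theorem~\ref{thm:Hstar-compact-closed}. The only difference is that the paper gets $H^{*}$-compactness by noting that finite sets are compact in $(\mathbb{X},\tau_{H^{*}})$ and invoking Theorem~\ref{thm:Hstar-compact-associated}, whereas you verify Definition~\ref{def:Hstar-compact} directly. Your cross-check via Theorem~\ref{thm:Hstar-T1-singletons} and the closure of $H^{*}C(\mathbb{X})$ under finite unions is also valid, and it shows that the weaker hypothesis $H^{*}$-$T_{1}$ already suffices.
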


\begin{proof}
Every finite subset of a topological space is compact. Therefore,
by Theorem~\ref{thm:Hstar-compact-associated}, every finite subset
of $\mathbb{X}$ is $H^{*}$-compact. The conclusion follows from
Theorem~\ref{thm:Hstar-compact-closed}.
\end{proof}

\subsection{Compactness and the Original Topology}
\label{subsec:Hstar-compact-original}

We next compare $H^{*}$-compactness with compactness in the original
topology. The direction of implication is determined by the
inclusion relation between $\tau$ and $\tau_{H^{*}}$.

\begin{proposition}
\label{prop:Hstar-compact-implies-original}
Suppose that the associated topology $\tau_{H^{*}}$ exists and
\[
\tau\subseteq\tau_{H^{*}}.
\]
If $\mathbb{K}\subseteq\mathbb{X}$ is $H^{*}$-compact, then
$\mathbb{K}$ is compact with respect to the original topology
$\tau$.
\end{proposition}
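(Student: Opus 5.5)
The plan is to argue directly from the definitions, using the inclusion $\tau\subseteq\tau_{H^{*}}$ to turn an arbitrary $\tau$-open cover of $\mathbb{K}$ into an $H^{*}$-open cover. Compactness of $\mathbb{K}$ with respect to $\tau$ will be taken in the usual sense. A subspace version, in which covers are by open sets of the subspace topology on $\mathbb{K}$, reduces to this one, since every relatively open set has the form $\mathbb{K}\cap\mathbb{W}$ with $\mathbb{W}\in\tau$.

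First, fix a family $\{\mathbb{W}_{\lambda}:\lambda\in\Lambda\}\subseteq\tau$ with $\mathbb{K}\subseteq\bigcup_{\lambda\in\Lambda}\mathbb{W}_{\lambda}$. Second, invoke the hypothesis $\tau\subseteq\tau_{H^{*}}=H^{*}O(\mathbb{X})$ (Definition~\ref{def:associated-Hstar-topology}) to conclude that each $\mathbb{W}_{\lambda}$ is $H^{*}$-open. Hence the same family is a cover of $\mathbb{K}$ by $H^{*}$-open subsets of $\mathbb{X}$.

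Third, apply Definition~\ref{def:Hstar-compact}. It gives indices $\lambda_{1},\ldots,\lambda_{n}$ with $\mathbb{K}\subseteq\bigcup_{i=1}^{n}\mathbb{W}_{\lambda_i}$. This finite subfamily consists of $\tau$-open sets, so it is a finite subcover of the original $\tau$-cover, and $\mathbb{K}$ is $\tau$-compact.

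Alternatively, one could use Theorem~\ref{thm:Hstar-compact-associated} to pass to compactness in $(\mathbb{X},\tau_{H^{*}})$, and then use the standard fact that compactness is inherited by coarser topologies. The direct route above is shorter, however.

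There is no real obstacle here. The only point needing care is the direction of the implication. Because $\tau$ is coarser than $\tau_{H^{*}}$, $\tau$-covers form a subclass of $H^{*}$-open covers, so $H^{*}$-compactness is the stronger condition. The converse should not be expected; for instance, in Example~\ref{ex:infinite-discrete-Hstar} with $\mathbb{A}$ infinite, $\mathbb{X}$ is $\tau$-compact but not discrete-compact.
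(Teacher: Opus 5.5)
Your proof is correct and follows the paper's own argument: a $\tau$-open cover of $\mathbb{K}$ is, via $\tau\subseteq\tau_{H^{*}}=H^{*}O(\mathbb{X})$, an $H^{*}$-open cover, and $H^{*}$-compactness then gives a finite subcover. Your side remark is also sound: in Example~\ref{ex:infinite-discrete-Hstar} with $\mathbb{A}$ infinite, $\tau$ is finite, so $\mathbb{X}$ is $\tau$-compact but not $H^{*}$-compact, which shows the converse fails.
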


\begin{proof}
Let
\[
\{\mathbb{U}_{\lambda}:\lambda\in\Lambda\}
\]
be a $\tau$-open cover of $\mathbb{K}$. Since
\[
\tau\subseteq\tau_{H^{*}}
=
H^{*}O(\mathbb{X}),
\]
every $\mathbb{U}_{\lambda}$ is $H^{*}$-open. Thus the given family
is an $H^{*}$-open cover of $\mathbb{K}$.

By $H^{*}$-compactness, it admits a finite subcover. Hence
$\mathbb{K}$ is compact in the topology $\tau$.
\end{proof}

\begin{remark}
\label{rem:compactness-converse}
The converse of
Proposition~\ref{prop:Hstar-compact-implies-original} does not
follow merely from
\[
\tau\subseteq\tau_{H^{*}}.
\]
Passing from $\tau$ to the finer topology $\tau_{H^{*}}$ may
introduce additional open covers. Consequently, compactness with
respect to $\tau$ does not in general imply compactness with respect
to $\tau_{H^{*}}$.
\end{remark}

\begin{proposition}
\label{prop:equal-topologies-compactness}
If
\[
\tau=\tau_{H^{*}},
\]
then, for every $\mathbb{K}\subseteq\mathbb{X}$,
\[
\mathbb{K}\text{ is compact in }(\mathbb{X},\tau)
\quad\Longleftrightarrow\quad
\mathbb{K}\text{ is $H^{*}$-compact}.
\]
\end{proposition}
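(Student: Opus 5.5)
The plan is to reduce this proposition to Theorem~\ref{thm:Hstar-compact-associated}, which identifies $H^{*}$-compactness with compactness in the associated space. Note first that the hypothesis $\tau=\tau_{H^{*}}$ is only meaningful when the associated topology exists. So $H^{*}O(\mathbb{X})$ is assumed to be an ordinary topology, and in fact
\[
H^{*}O(\mathbb{X})=\tau_{H^{*}}=\tau .
\]
Hence the hypotheses of Theorem~\ref{thm:Hstar-compact-associated} are satisfied.

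Fix $\mathbb{K}\subseteq\mathbb{X}$. By Theorem~\ref{thm:Hstar-compact-associated}, $\mathbb{K}$ is $H^{*}$-compact if and only if $\mathbb{K}$ is compact as a subspace of $(\mathbb{X},\tau_{H^{*}})$. Since $\tau_{H^{*}}=\tau$ as families of subsets of $\mathbb{X}$, the two subspace topologies on $\mathbb{K}$ coincide. Therefore compactness of $\mathbb{K}$ in $(\mathbb{X},\tau_{H^{*}})$ is literally the same condition as compactness of $\mathbb{K}$ in $(\mathbb{X},\tau)$, and chaining the two equivalences gives the claim.

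As an alternative that avoids subspaces entirely, one can argue directly with covers. One direction is Proposition~\ref{prop:Hstar-compact-implies-original}, applied with the inclusion $\tau\subseteq\tau_{H^{*}}$, which follows from equality. For the converse, let $\mathbb{K}$ be $\tau$-compact and take an $H^{*}$-open cover of $\mathbb{K}$. Its members lie in $H^{*}O(\mathbb{X})=\tau$, so it is a $\tau$-open cover and has a finite subcover. This is exactly the converse that Remark~\ref{rem:compactness-converse} notes can fail when the inclusion is strict; here equality removes the obstruction.

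There is no real obstacle. The only point needing care is to state explicitly that equality of the families means the covers of $\mathbb{K}$ allowed in the two notions are the same. One should also check that compactness of $\mathbb{K}$ "in $(\mathbb{X},\tau)$" is read as compactness by covers of $\tau$-open subsets of $\mathbb{X}$, equivalently in the subspace topology; this is the standard equivalence already used implicitly in the proof of Theorem~\ref{thm:Hstar-compact-associated}.
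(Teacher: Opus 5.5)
Your proposal is correct, and your second, direct argument is essentially the paper's proof: since $\tau=\tau_{H^{*}}=H^{*}O(\mathbb{X})$, the $\tau$-open covers of $\mathbb{K}$ are exactly its $H^{*}$-open covers, so the two compactness conditions coincide. Your first route through Theorem~\ref{thm:Hstar-compact-associated} and subspace topologies is also valid, but it is an unnecessary detour for what is a one-line identification.
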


\begin{proof}
If
\[
\tau=\tau_{H^{*}}=H^{*}O(\mathbb{X}),
\]
then the $\tau$-open covers of $\mathbb{K}$ are exactly its
$H^{*}$-open covers. Hence the two compactness conditions coincide.
\end{proof}

\subsection{The Discrete Case}
\label{subsec:Hstar-discrete-compactness}

The examples in Section~\ref{sec:examples} also illustrate the
behaviour of $H^{*}$-compactness when the associated topology is
discrete.

\begin{proposition}
\label{prop:discrete-Hstar-compact}
Suppose that
\[
\tau_{H^{*}}=\mathcal{P}(\mathbb{X}).
\]
Then, for every $\mathbb{K}\subseteq\mathbb{X}$,
\[
\mathbb{K}\text{ is $H^{*}$-compact}
\quad\Longleftrightarrow\quad
\mathbb{K}\text{ is finite}.
\]
\end{proposition}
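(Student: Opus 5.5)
The plan is to reduce the statement to ordinary compactness via Theorem~\ref{thm:Hstar-compact-associated}. By hypothesis $\tau_{H^{*}}=\mathcal{P}(\mathbb{X})$ is an ordinary topology, namely the discrete topology. So $\mathbb{K}$ is $H^{*}$-compact if and only if $\mathbb{K}$ is compact as a subspace of the discrete space $(\mathbb{X},\mathcal{P}(\mathbb{X}))$. It then remains to show that a subset of a discrete space is compact exactly when it is finite. I would also argue this directly from Definition~\ref{def:Hstar-compact}, to keep the argument self-contained.

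For the direction ``finite $\Rightarrow$ $H^{*}$-compact'', let $\mathbb{K}=\{k_{1},\dots,k_{n}\}$ and let $\{\mathbb{U}_{\lambda}:\lambda\in\Lambda\}$ be any cover of $\mathbb{K}$ by $H^{*}$-open sets. For each $i$, choose $\lambda_{i}$ with $k_{i}\in\mathbb{U}_{\lambda_{i}}$. Then $\mathbb{K}\subseteq\bigcup_{i=1}^{n}\mathbb{U}_{\lambda_{i}}$, which is a finite subcover. This step does not use the discreteness hypothesis; it is also covered by the remark in the proof of Corollary~\ref{cor:Hstar-compact-singletons}. The case $\mathbb{K}=\varnothing$ is trivial.

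For the direction ``$H^{*}$-compact $\Rightarrow$ finite'', use the hypothesis. Every singleton $\{k\}$ lies in $\mathcal{P}(\mathbb{X})=H^{*}O(\mathbb{X})$, so $\{\{k\}:k\in\mathbb{K}\}$ is an $H^{*}$-open cover of $\mathbb{K}$. By $H^{*}$-compactness there are $k_{1},\dots,k_{n}\in\mathbb{K}$ with $\mathbb{K}\subseteq\{k_{1}\}\cup\dots\cup\{k_{n}\}$. Hence $\mathbb{K}$ is finite. No member of this cover can be omitted, since each singleton is the only member containing its point; but we only need that a finite subfamily covers.

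There is no genuine obstacle here. The only point requiring care is that the second direction really uses the singletons being $H^{*}$-open, which is exactly what $\tau_{H^{*}}=\mathcal{P}(\mathbb{X})$ provides. Applied to Examples~\ref{ex:finite-discrete-Hstar} and~\ref{ex:infinite-discrete-Hstar}, it shows that the infinite example is not $H^{*}$-compact whenever $\mathbb{A}$ is infinite. The original space $(\mathbb{X},\tau)$ there is nonetheless compact, since $\tau$ is finite. This illustrates Remark~\ref{rem:compactness-converse}.
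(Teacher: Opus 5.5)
Your proof is correct and follows essentially the paper's argument: the paper also reduces to compactness in the discrete space $(\mathbb{X},\tau_{H^{*}})$ via Theorem~\ref{thm:Hstar-compact-associated}, and then uses the cover $\{\{k\}:k\in\mathbb{K}\}$ by singletons to rule out infinite $\mathbb{K}$. Your direct check from Definition~\ref{def:Hstar-compact} just makes that reduction step unnecessary, and your side remark is also correct: the original space $(\mathbb{X},\tau)$ in Example~\ref{ex:infinite-discrete-Hstar} is compact because $\tau$ is finite, which illustrates Remark~\ref{rem:compactness-converse}.
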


\begin{proof}
Since
\[
\tau_{H^{*}}=\mathcal{P}(\mathbb{X}),
\]
the associated space is discrete. By
Theorem~\ref{thm:Hstar-compact-associated},
$H^{*}$-compactness of $\mathbb{K}$ is equivalent to compactness of
$\mathbb{K}$ in the associated discrete topology.

If $\mathbb{K}$ is finite, then it is compact.

Conversely, suppose that $\mathbb{K}$ is infinite. The family
\[
\bigl\{\{k\}:k\in\mathbb{K}\bigr\}
\]
is an open cover of $\mathbb{K}$ in the discrete topology. No finite
subfamily covers $\mathbb{K}$. Thus $\mathbb{K}$ is not compact and
hence is not $H^{*}$-compact.

Therefore,
\[
\mathbb{K}\text{ is $H^{*}$-compact}
\quad\Longleftrightarrow\quad
\mathbb{K}\text{ is finite}.
\]
\end{proof}

\begin{corollary}
\label{cor:infinite-discrete-not-Hstar-compact}
If $\mathbb{X}$ is infinite and
\[
\tau_{H^{*}}=\mathcal{P}(\mathbb{X}),
\]
then $(\mathbb{X},\tau)$ is not $H^{*}$-compact.
\end{corollary}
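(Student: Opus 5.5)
The plan is to deduce the corollary directly from Proposition~\ref{prop:discrete-Hstar-compact}, taking $\mathbb{K}=\mathbb{X}$. By Definition~\ref{def:Hstar-compact}, the space $(\mathbb{X},\tau)$ is $H^{*}$-compact exactly when the subset $\mathbb{X}$ is $H^{*}$-compact. Under the hypothesis
\[
\tau_{H^{*}}=\mathcal{P}(\mathbb{X}),
\]
Proposition~\ref{prop:discrete-Hstar-compact} says that a subset is $H^{*}$-compact if and only if it is finite. Since $\mathbb{X}$ is assumed infinite, $\mathbb{X}$ is not $H^{*}$-compact, and therefore $(\mathbb{X},\tau)$ is not $H^{*}$-compact.

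For a self-contained check, I would also exhibit the failing cover directly. Every singleton $\{k\}$ lies in $\mathcal{P}(\mathbb{X})=H^{*}O(\mathbb{X})$, so it is $H^{*}$-open. The family
\[
\bigl\{\{k\}:k\in\mathbb{X}\bigr\}
\]
is then an $H^{*}$-open cover of $\mathbb{X}$. A finite subfamily has a finite union, which cannot equal the infinite set $\mathbb{X}$. Hence this cover has no finite subcover.

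There is no real obstacle here. The only point to note is that the hypothesis $\tau_{H^{*}}=\mathcal{P}(\mathbb{X})$ already presupposes that $H^{*}O(\mathbb{X})$ is an ordinary topology, so Proposition~\ref{prop:discrete-Hstar-compact} applies as stated. The direct cover argument above does not even need Theorem~\ref{thm:Hstar-compact-associated}.
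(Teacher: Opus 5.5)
Your proposal is correct and matches the paper's proof, which likewise applies Proposition~\ref{prop:discrete-Hstar-compact} with $\mathbb{K}=\mathbb{X}$ to conclude that an infinite $\mathbb{X}$ cannot be $H^{*}$-compact. Your supplementary singleton-cover argument is also valid; it just repeats, directly for $\mathbb{X}$, the argument used in the proof of that proposition.
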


\begin{proof}
By Proposition~\ref{prop:discrete-Hstar-compact},
$\mathbb{X}$ is $H^{*}$-compact if and only if $\mathbb{X}$ is
finite. Hence an infinite $\mathbb{X}$ cannot be
$H^{*}$-compact.
\end{proof}

\begin{remark}
\label{rem:compactness-separation-tradeoff}
The discrete examples illustrate that the passage to the associated
$H^{*}$-topology may have different effects on separation and
compactness. In the examples of Section~\ref{sec:examples}, the
associated topology is discrete and therefore satisfies strong
separation properties. If the underlying set is infinite, however,
the same discrete associated topology is not compact. Thus improved
separation behaviour of the associated topology does not imply
compactness.
\end{remark}

\section{Mappings and Preservation Properties}
\label{sec:mappings}

We now examine mappings between spaces equipped with
$H^{*}$-generated structures. The main purpose of this section is to
interpret $H^{*}$-irresolute mappings through the associated
topologies and to derive the corresponding preservation properties.

Let $(\mathbb{X},\tau)$ and $(\mathbb{Y},\sigma)$ be topological
spaces. Their families of $H^{*}$-open sets are denoted by
\[
H^{*}O(\mathbb{X})
\qquad\text{and}\qquad
H^{*}O(\mathbb{Y}),
\]
respectively.

\subsection{$H^{*}$-Irresolute Mappings}
\label{subsec:Hstar-irresolute}

We begin with the notion of $H^{*}$-irresoluteness.

\begin{definition}
\label{def:Hstar-irresolute}
A mapping
\[
f:(\mathbb{X},\tau)\longrightarrow(\mathbb{Y},\sigma)
\]
is called \emph{$H^{*}$-irresolute} if
\[
f^{-1}(\mathbb{V})\in H^{*}O(\mathbb{X})
\]
for every
\[
\mathbb{V}\in H^{*}O(\mathbb{Y}).
\]
\end{definition}

When the associated $H^{*}$-topologies exist, this generalized
mapping condition has an exact classical interpretation.

\begin{theorem}
\label{thm:Hstar-irresolute-continuous}
Suppose that
\[
\tau_{H^{*}}=H^{*}O(\mathbb{X})
\qquad\text{and}\qquad
\sigma_{H^{*}}=H^{*}O(\mathbb{Y})
\]
are ordinary topologies. Then a mapping
\[
f:(\mathbb{X},\tau)\longrightarrow(\mathbb{Y},\sigma)
\]
is $H^{*}$-irresolute if and only if
\[
f:(\mathbb{X},\tau_{H^{*}})
\longrightarrow
(\mathbb{Y},\sigma_{H^{*}})
\]
is continuous.
\end{theorem}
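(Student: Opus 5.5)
The plan is to unwind both conditions into statements about preimages of members of the same families and observe that they coincide. By Definition~\ref{def:associated-Hstar-topology}, the open sets of $(\mathbb{X},\tau_{H^{*}})$ are exactly the members of $H^{*}O(\mathbb{X})$. Likewise, the open sets of $(\mathbb{Y},\sigma_{H^{*}})$ are exactly the members of $H^{*}O(\mathbb{Y})$. Continuity of $f:(\mathbb{X},\tau_{H^{*}})\to(\mathbb{Y},\sigma_{H^{*}})$ means, by the usual open-set definition, that $f^{-1}(\mathbb{V})\in\tau_{H^{*}}$ for every $\mathbb{V}\in\sigma_{H^{*}}$.

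First, assume $f$ is $H^{*}$-irresolute and let $\mathbb{V}\in\sigma_{H^{*}}$. Since $\sigma_{H^{*}}=H^{*}O(\mathbb{Y})$, Definition~\ref{def:Hstar-irresolute} gives $f^{-1}(\mathbb{V})\in H^{*}O(\mathbb{X})=\tau_{H^{*}}$, so $f$ is continuous between the associated spaces. Conversely, assume continuity and take $\mathbb{V}\in H^{*}O(\mathbb{Y})$. Then $\mathbb{V}$ is $\sigma_{H^{*}}$-open, so $f^{-1}(\mathbb{V})$ is $\tau_{H^{*}}$-open, that is, $H^{*}$-open in $\mathbb{X}$. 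This is precisely $H^{*}$-irresoluteness.

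There is no substantive obstacle; the argument is a direct identification in the spirit of Theorem~\ref{thm:Hstar-transfer-principle}. The only point needing care is that the hypothesis that both families are ordinary topologies is what makes ``continuous'' meaningful for the associated spaces. The equivalence itself uses nothing beyond the equalities $\tau_{H^{*}}=H^{*}O(\mathbb{X})$ and $\sigma_{H^{*}}=H^{*}O(\mathbb{Y})$.

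If desired, I would add the equivalent closed-set formulation. It follows from $f^{-1}(\mathbb{Y}\setminus\mathbb{V})=\mathbb{X}\setminus f^{-1}(\mathbb{V})$ together with Proposition~\ref{prop:Gamma-Hstar-closed}, which identifies the closed sets of the associated spaces with the $H^{*}$-closed sets.
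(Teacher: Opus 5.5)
Your proposal is correct and follows the paper's proof: in both directions you substitute $\sigma_{H^{*}}=H^{*}O(\mathbb{Y})$ and $\tau_{H^{*}}=H^{*}O(\mathbb{X})$ into the open-set definition of continuity and the definition of $H^{*}$-irresoluteness. The closed-set reformulation you mention is an optional addition that the paper does not include.
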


\begin{proof}
Suppose that $f$ is $H^{*}$-irresolute. Let
\[
\mathbb{V}\in\sigma_{H^{*}}.
\]
Since
\[
\sigma_{H^{*}}=H^{*}O(\mathbb{Y}),
\]
the set $\mathbb{V}$ is $H^{*}$-open in $\mathbb{Y}$. Therefore,
by $H^{*}$-irresoluteness,
\[
f^{-1}(\mathbb{V})\in H^{*}O(\mathbb{X})
=\tau_{H^{*}}.
\]
Hence
\[
f:(\mathbb{X},\tau_{H^{*}})
\longrightarrow
(\mathbb{Y},\sigma_{H^{*}})
\]
is continuous.

Conversely, suppose that $f$ is continuous between the associated
topologies. If
\[
\mathbb{V}\in H^{*}O(\mathbb{Y}),
\]
then
\[
\mathbb{V}\in\sigma_{H^{*}}.
\]
Thus
\[
f^{-1}(\mathbb{V})\in\tau_{H^{*}}
=H^{*}O(\mathbb{X}),
\]
and hence $f$ is $H^{*}$-irresolute.
\end{proof}

\begin{remark}
\label{rem:Hstar-irresolute-associated}
Thus, once the associated topologies exist,
$H^{*}$-irresoluteness is not a separate mapping condition at the
associated-topology level; it is precisely ordinary continuity
between the spaces
\[
(\mathbb{X},\tau_{H^{*}})
\qquad\text{and}\qquad
(\mathbb{Y},\sigma_{H^{*}}).
\]
\end{remark}

\subsection{Inverse Images of $H^{*}$-Difference Sets}
\label{subsec:inverse-Hstar-D}

We next recall the inverse-image behaviour of $H^{*}$-difference
sets. This result is included because it will be used in the
preservation of $H^{*}$-difference separation properties.

\begin{theorem}
\label{thm:inverse-Hstar-D}
Let
\[
f:(\mathbb{X},\tau)\longrightarrow(\mathbb{Y},\sigma)
\]
be an $H^{*}$-irresolute surjection. If $\mathbb{D}$ is an
$H^{*}D$-set in $\mathbb{Y}$, then
$f^{-1}(\mathbb{D})$ is an $H^{*}D$-set in $\mathbb{X}$.
\end{theorem}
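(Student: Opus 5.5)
We unpack the definition of an $H^{*}D$-set in $\mathbb{Y}$ and pull everything back along $f$. By Definition~\ref{def:Hstar-D-set}, $\mathbb{D}$ being an $H^{*}D$-set in $\mathbb{Y}$ means that
\[
\mathbb{D}=\mathbb{U}\setminus\mathbb{V},
\qquad
\mathbb{U},\mathbb{V}\in H^{*}O(\mathbb{Y}),
\qquad
\mathbb{U}\neq\mathbb{Y}.
\]
Inverse images commute with set differences, so
\[
f^{-1}(\mathbb{D})
=
f^{-1}(\mathbb{U})\setminus f^{-1}(\mathbb{V}).
\]
This identity is a routine set-theoretic check and needs no hypothesis on $f$.

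Next we verify that the two pieces are $H^{*}$-open in $\mathbb{X}$. By Definition~\ref{def:Hstar-irresolute}, $H^{*}$-irresoluteness of $f$ gives
\[
f^{-1}(\mathbb{U}),\ f^{-1}(\mathbb{V})\in H^{*}O(\mathbb{X}).
\]
No hypothesis that $H^{*}O(\mathbb{X})$ or $H^{*}O(\mathbb{Y})$ is a topology is needed here, since the definition of an $H^{*}D$-set refers only to membership in the families of $H^{*}$-open sets.

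The only substantive point, which I expect to be the main (though mild) obstacle, is the properness condition
\[
f^{-1}(\mathbb{U})\neq\mathbb{X},
\]
and this is exactly where surjectivity is used. Since $\mathbb{U}\neq\mathbb{Y}$, we can choose $y\in\mathbb{Y}\setminus\mathbb{U}$. Surjectivity provides $x\in\mathbb{X}$ with $f(x)=y$. Then $x\notin f^{-1}(\mathbb{U})$, so $f^{-1}(\mathbb{U})$ is a proper subset of $\mathbb{X}$. Without surjectivity this step can fail, because $f^{-1}(\mathbb{U})$ could be all of $\mathbb{X}$ when the image of $f$ lies inside $\mathbb{U}$.

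Combining the three observations shows that $f^{-1}(\mathbb{D})$ is a difference of two $H^{*}$-open subsets of $\mathbb{X}$ whose first term is not $\mathbb{X}$. Hence $f^{-1}(\mathbb{D})$ is an $H^{*}D$-set in $\mathbb{X}$, which completes the argument.
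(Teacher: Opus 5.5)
Your proof is correct and follows essentially the same route as the paper: you write $\mathbb{D}=\mathbb{U}\setminus\mathbb{V}$, pull back both $H^{*}$-open sets using $H^{*}$-irresoluteness, use $f^{-1}(\mathbb{U}\setminus\mathbb{V})=f^{-1}(\mathbb{U})\setminus f^{-1}(\mathbb{V})$, and invoke surjectivity to obtain $f^{-1}(\mathbb{U})\neq\mathbb{X}$. Your remark that no topology hypothesis on the families of $H^{*}$-open sets is needed also matches the paper, which states this theorem without that assumption.
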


\begin{proof}
Let $\mathbb{D}$ be an $H^{*}D$-set in $\mathbb{Y}$. Then there
exist
\[
\mathbb{U},\mathbb{V}\in H^{*}O(\mathbb{Y})
\]
such that
\[
\mathbb{D}=\mathbb{U}\setminus\mathbb{V}
\qquad\text{and}\qquad
\mathbb{U}\neq\mathbb{Y}.
\]
Since $f$ is $H^{*}$-irresolute,
\[
f^{-1}(\mathbb{U}),
\,
f^{-1}(\mathbb{V})
\in H^{*}O(\mathbb{X}).
\]
Furthermore,
\[
\begin{aligned}
f^{-1}(\mathbb{D})
&=
f^{-1}(\mathbb{U}\setminus\mathbb{V})\\
&=
f^{-1}(\mathbb{U})\setminus f^{-1}(\mathbb{V}).
\end{aligned}
\]

It remains to verify the properness condition. Since
$\mathbb{U}\neq\mathbb{Y}$, choose
\[
y\in\mathbb{Y}\setminus\mathbb{U}.
\]
Because $f$ is surjective, there exists $x\in\mathbb{X}$ such that
\[
f(x)=y.
\]
Hence
\[
x\notin f^{-1}(\mathbb{U}),
\]
so that
\[
f^{-1}(\mathbb{U})\neq\mathbb{X}.
\]
Therefore $f^{-1}(\mathbb{D})$ is an $H^{*}D$-set in
$\mathbb{X}$.
\end{proof}

\begin{remark}
\label{rem:surjectivity-Hstar-D}
The surjectivity assumption in
Theorem~\ref{thm:inverse-Hstar-D} is essential for the preceding
argument. Indeed, from
\[
\mathbb{U}\neq\mathbb{Y}
\]
alone one cannot conclude that
\[
f^{-1}(\mathbb{U})\neq\mathbb{X}.
\]
Surjectivity ensures that a point of
$\mathbb{Y}\setminus\mathbb{U}$ has a preimage in $\mathbb{X}$.
\end{remark}

\subsection{Preservation of $H^{*}$-Difference Separation}
\label{subsec:Hstar-D1-mappings}

The preceding inverse-image result yields the following preservation
properties.

\begin{theorem}
\label{thm:Hstar-D1-injective}
Let
\[
f:(\mathbb{X},\tau)\longrightarrow(\mathbb{Y},\sigma)
\]
be an $H^{*}$-irresolute bijection. If
$(\mathbb{Y},\sigma)$ is $H^{*}$-$D_{1}$, then
$(\mathbb{X},\tau)$ is $H^{*}$-$D_{1}$.
\end{theorem}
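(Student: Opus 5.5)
The plan is to pull the separating difference sets in $\mathbb{Y}$ back along $f$, using Theorem~\ref{thm:inverse-Hstar-D}. Let $k,l\in\mathbb{X}$ be distinct. Since $f$ is a bijection, in particular injective, the images $f(k)$ and $f(l)$ are distinct points of $\mathbb{Y}$. Because $(\mathbb{Y},\sigma)$ is $H^{*}$-$D_{1}$, Definition~\ref{def:Hstar-D1-D2} gives $H^{*}D$-sets $\mathbb{D}_{1},\mathbb{D}_{2}$ in $\mathbb{Y}$ such that
\[
f(k)\in\mathbb{D}_{1},\quad f(l)\notin\mathbb{D}_{1},\qquad f(l)\in\mathbb{D}_{2},\quad f(k)\notin\mathbb{D}_{2}.
\]

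Next I will set $\mathbb{E}_{i}=f^{-1}(\mathbb{D}_{i})$ for $i=1,2$. Since $f$ is an $H^{*}$-irresolute surjection, Theorem~\ref{thm:inverse-Hstar-D} shows that each $\mathbb{E}_{i}$ is an $H^{*}D$-set in $\mathbb{X}$. In that theorem, surjectivity is exactly what keeps the preimage of the proper set $\mathbb{U}\neq\mathbb{Y}$ proper in $\mathbb{X}$.

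Membership then transfers directly from the definition of a preimage: $x\in f^{-1}(\mathbb{D})$ if and only if $f(x)\in\mathbb{D}$. Hence
\[
k\in\mathbb{E}_{1},\quad l\notin\mathbb{E}_{1},\qquad l\in\mathbb{E}_{2},\quad k\notin\mathbb{E}_{2},
\]
which is precisely the $H^{*}$-$D_{1}$ condition for the pair $k,l$. Since $k$ and $l$ were arbitrary, $(\mathbb{X},\tau)$ is $H^{*}$-$D_{1}$.

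There is no real obstacle. The only point to watch is that both halves of bijectivity are used, for different purposes: injectivity ensures that distinct points of $\mathbb{X}$ have distinct images, so the hypothesis on $\mathbb{Y}$ can be applied; surjectivity is needed to invoke Theorem~\ref{thm:inverse-Hstar-D}. The proof should state these two uses explicitly.
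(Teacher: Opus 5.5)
Your proposal is correct and matches the paper's proof step for step. Injectivity gives $f(k)\neq f(l)$, surjectivity lets you apply Theorem~\ref{thm:inverse-Hstar-D} to pull back the two $H^{*}D$-sets, and preimage membership transfers the separation conditions to $k$ and $l$.
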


\begin{proof}
Let $k,l\in\mathbb{X}$ with $k\neq l$. Since $f$ is injective,
\[
f(k)\neq f(l).
\]
As $(\mathbb{Y},\sigma)$ is $H^{*}$-$D_{1}$, there exist
$H^{*}D$-sets $\mathbb{D}_{1}$ and $\mathbb{D}_{2}$ in
$\mathbb{Y}$ such that
\[
f(k)\in\mathbb{D}_{1},
\qquad
f(l)\notin\mathbb{D}_{1},
\]
and
\[
f(l)\in\mathbb{D}_{2},
\qquad
f(k)\notin\mathbb{D}_{2}.
\]

Since $f$ is bijective, it is in particular surjective. Hence
Theorem~\ref{thm:inverse-Hstar-D} implies that
\[
f^{-1}(\mathbb{D}_{1})
\quad\text{and}\quad
f^{-1}(\mathbb{D}_{2})
\]
are $H^{*}D$-sets in $\mathbb{X}$. Moreover,
\[
k\in f^{-1}(\mathbb{D}_{1}),
\qquad
l\notin f^{-1}(\mathbb{D}_{1}),
\]
and
\[
l\in f^{-1}(\mathbb{D}_{2}),
\qquad
k\notin f^{-1}(\mathbb{D}_{2}).
\]
Therefore $(\mathbb{X},\tau)$ is $H^{*}$-$D_{1}$.
\end{proof}

The corresponding statement holds for $H^{*}$-$D_{2}$ separation.

\begin{theorem}
\label{thm:Hstar-D2-injective}
Let
\[
f:(\mathbb{X},\tau)\longrightarrow(\mathbb{Y},\sigma)
\]
be an $H^{*}$-irresolute bijection. If
$(\mathbb{Y},\sigma)$ is $H^{*}$-$D_{2}$, then
$(\mathbb{X},\tau)$ is $H^{*}$-$D_{2}$.
\end{theorem}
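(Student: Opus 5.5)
The argument will mirror the proof of Theorem~\ref{thm:Hstar-D1-injective}, the only new ingredient being that disjointness is preserved by preimages. Fix distinct points $k,l\in\mathbb{X}$. Since $f$ is a bijection, in particular injective, we have $f(k)\neq f(l)$ in $\mathbb{Y}$. Because $(\mathbb{Y},\sigma)$ is $H^{*}$-$D_{2}$, Definition~\ref{def:Hstar-D1-D2}(ii) supplies disjoint $H^{*}D$-sets $\mathbb{D}_{1},\mathbb{D}_{2}\subseteq\mathbb{Y}$ with
\[
f(k)\in\mathbb{D}_{1},\qquad f(l)\in\mathbb{D}_{2},\qquad \mathbb{D}_{1}\cap\mathbb{D}_{2}=\varnothing .
\]

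Next I will pull these sets back along $f$. Since $f$ is an $H^{*}$-irresolute surjection, Theorem~\ref{thm:inverse-Hstar-D} shows that $f^{-1}(\mathbb{D}_{1})$ and $f^{-1}(\mathbb{D}_{2})$ are $H^{*}D$-sets in $\mathbb{X}$. The surjectivity is exactly what keeps the properness condition $f^{-1}(\mathbb{U})\neq\mathbb{X}$ in Definition~\ref{def:Hstar-D-set}, as noted in Remark~\ref{rem:surjectivity-Hstar-D}. Membership transfers directly: $k\in f^{-1}(\mathbb{D}_{1})$ and $l\in f^{-1}(\mathbb{D}_{2})$.

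It then remains to check disjointness, which follows from the set identity
\[
f^{-1}(\mathbb{D}_{1})\cap f^{-1}(\mathbb{D}_{2})
=f^{-1}(\mathbb{D}_{1}\cap\mathbb{D}_{2})
=f^{-1}(\varnothing)=\varnothing .
\]
Thus the two preimages are disjoint $H^{*}D$-sets in $\mathbb{X}$ separating $k$ and $l$, and so $(\mathbb{X},\tau)$ is $H^{*}$-$D_{2}$.

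I do not expect a genuine obstacle here. The only delicate point, the properness of $f^{-1}(\mathbb{U})$, is already handled by Theorem~\ref{thm:inverse-Hstar-D}. The one thing to be careful about is invoking injectivity to obtain $f(k)\neq f(l)$; without it, the hypothesis on $\mathbb{Y}$ could not be applied to the pair $f(k),f(l)$.
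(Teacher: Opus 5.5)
Your proposal is correct and follows the paper's own proof essentially step for step. Injectivity gives $f(k)\neq f(l)$, Theorem~\ref{thm:inverse-Hstar-D} (using surjectivity) pulls the disjoint $H^{*}D$-sets back to $H^{*}D$-sets in $\mathbb{X}$, and disjointness is preserved via $f^{-1}(\mathbb{D}_{1})\cap f^{-1}(\mathbb{D}_{2})=f^{-1}(\varnothing)=\varnothing$.
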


\begin{proof}
Let $k,l\in\mathbb{X}$ with $k\neq l$. Since $f$ is injective,
\[
f(k)\neq f(l).
\]
By the $H^{*}$-$D_{2}$ property of $\mathbb{Y}$, there exist
disjoint $H^{*}D$-sets $\mathbb{D}_{1}$ and $\mathbb{D}_{2}$ such
that
\[
f(k)\in\mathbb{D}_{1},
\qquad
f(l)\in\mathbb{D}_{2}.
\]

Since $f$ is surjective,
Theorem~\ref{thm:inverse-Hstar-D} shows that
$f^{-1}(\mathbb{D}_{1})$ and
$f^{-1}(\mathbb{D}_{2})$ are $H^{*}D$-sets in $\mathbb{X}$.
Furthermore,
\[
\begin{aligned}
f^{-1}(\mathbb{D}_{1})
\cap
f^{-1}(\mathbb{D}_{2})
&=
f^{-1}(\mathbb{D}_{1}\cap\mathbb{D}_{2})\\
&=
f^{-1}(\varnothing)\\
&=
\varnothing.
\end{aligned}
\]
Also,
\[
k\in f^{-1}(\mathbb{D}_{1}),
\qquad
l\in f^{-1}(\mathbb{D}_{2}).
\]
Therefore $(\mathbb{X},\tau)$ is $H^{*}$-$D_{2}$.
\end{proof}

\subsection{Preservation of $H^{*}$-Compactness}
\label{subsec:Hstar-compact-mappings}

We now turn to a preservation result that follows naturally from
the associated-topology interpretation.

\begin{theorem}
\label{thm:Hstar-compact-image}
Suppose that
\[
\tau_{H^{*}}=H^{*}O(\mathbb{X})
\qquad\text{and}\qquad
\sigma_{H^{*}}=H^{*}O(\mathbb{Y})
\]
are ordinary topologies. Let
\[
f:(\mathbb{X},\tau)\longrightarrow(\mathbb{Y},\sigma)
\]
be $H^{*}$-irresolute. If
$\mathbb{K}\subseteq\mathbb{X}$ is $H^{*}$-compact, then
$f(\mathbb{K})$ is $H^{*}$-compact in $\mathbb{Y}$.
\end{theorem}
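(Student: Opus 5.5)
The plan is to reduce the statement to the classical fact that continuous images of compact sets are compact, using the correspondences established earlier. By Theorem~\ref{thm:Hstar-irresolute-continuous}, the hypothesis that $f$ is $H^{*}$-irresolute means exactly that
\[
f:(\mathbb{X},\tau_{H^{*}})\longrightarrow(\mathbb{Y},\sigma_{H^{*}})
\]
is continuous. By Theorem~\ref{thm:Hstar-compact-associated}, applied in $\mathbb{X}$, the $H^{*}$-compact set $\mathbb{K}$ is compact as a subspace of $(\mathbb{X},\tau_{H^{*}})$. The classical theorem then gives that $f(\mathbb{K})$ is compact as a subspace of $(\mathbb{Y},\sigma_{H^{*}})$. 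Finally, Theorem~\ref{thm:Hstar-compact-associated}, applied in $\mathbb{Y}$ (legitimate because $\sigma_{H^{*}}$ is assumed to be an ordinary topology), shows that $f(\mathbb{K})$ is $H^{*}$-compact in $\mathbb{Y}$.

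To keep the argument self-contained, I would also give the direct cover argument, which needs no topology hypothesis at all. Let $\{\mathbb{V}_{\lambda}:\lambda\in\Lambda\}\subseteq H^{*}O(\mathbb{Y})$ cover $f(\mathbb{K})$. Then each $f^{-1}(\mathbb{V}_{\lambda})$ lies in $H^{*}O(\mathbb{X})$ by irresoluteness, and these sets cover $\mathbb{K}$, since every $k\in\mathbb{K}$ has $f(k)$ in some $\mathbb{V}_{\lambda}$. By $H^{*}$-compactness, finitely many indices $\lambda_{1},\ldots,\lambda_{n}$ satisfy
\[
\mathbb{K}\subseteq\bigcup_{i=1}^{n}f^{-1}(\mathbb{V}_{\lambda_i}).
\]
Applying $f$ and using $f(f^{-1}(\mathbb{V}))\subseteq\mathbb{V}$ gives $f(\mathbb{K})\subseteq\bigcup_{i=1}^{n}\mathbb{V}_{\lambda_i}$, which is the required finite subcover.

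I do not expect a genuine obstacle here. The only point needing care is matching the two notions of compactness. Definition~\ref{def:Hstar-compact} uses covers by $H^{*}$-open subsets of the ambient space, while compactness of a subspace is phrased via relatively open sets. This matching is exactly what Theorem~\ref{thm:Hstar-compact-associated} supplies, so I would cite it rather than re-derive it. I would remark that the direct argument shows the associated-topology hypotheses are only needed for the first, transfer-style proof.
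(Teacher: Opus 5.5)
Your first argument is exactly the paper's proof. Theorem~\ref{thm:Hstar-irresolute-continuous} converts irresoluteness into continuity between the associated spaces. Theorem~\ref{thm:Hstar-compact-associated} is then applied first in $\mathbb{X}$ and later in $\mathbb{Y}$, and the classical fact that continuous images of compact sets are compact bridges the two.

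Your direct cover argument is also correct and worth keeping, because it is strictly more general. It uses only Definition~\ref{def:Hstar-irresolute}, Definition~\ref{def:Hstar-compact} and the inclusion $f(f^{-1}(\mathbb{V}))\subseteq\mathbb{V}$. Note that Definition~\ref{def:Hstar-compact} already phrases compactness through covers by $H^{*}$-open subsets of the ambient space. So the argument proves the conclusion without assuming that $H^{*}O(\mathbb{X})$ or $H^{*}O(\mathbb{Y})$ is a topology, or even that $H^{*}C(\mathbb{X})$ and $H^{*}C(\mathbb{Y})$ are closed under arbitrary intersections. It also avoids the relative-versus-ambient open set issue you flagged, which the paper handles only implicitly through Theorem~\ref{thm:Hstar-compact-associated}.

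The paper's route buys uniformity with its associated-topology framework. For this particular result, though, the hypotheses on $\tau_{H^{*}}$ and $\sigma_{H^{*}}$ are superfluous, and the same holds for Corollary~\ref{cor:Hstar-compact-surjection}.
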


\begin{proof}
By Theorem~\ref{thm:Hstar-irresolute-continuous},
\[
f:(\mathbb{X},\tau_{H^{*}})
\longrightarrow
(\mathbb{Y},\sigma_{H^{*}})
\]
is continuous.

By Theorem~\ref{thm:Hstar-compact-associated},
$\mathbb{K}$ is compact as a subspace of
$(\mathbb{X},\tau_{H^{*}})$. Since continuous images of compact
spaces are compact,
\[
f(\mathbb{K})
\]
is compact as a subspace of
$(\mathbb{Y},\sigma_{H^{*}})$.

Applying Theorem~\ref{thm:Hstar-compact-associated} to
$(\mathbb{Y},\sigma)$, we conclude that $f(\mathbb{K})$ is
$H^{*}$-compact.
\end{proof}

\begin{corollary}
\label{cor:Hstar-compact-surjection}
Suppose that the associated topologies $\tau_{H^{*}}$ and
$\sigma_{H^{*}}$ exist. Let
\[
f:(\mathbb{X},\tau)\longrightarrow(\mathbb{Y},\sigma)
\]
be an $H^{*}$-irresolute surjection. If
$(\mathbb{X},\tau)$ is $H^{*}$-compact, then
$(\mathbb{Y},\sigma)$ is $H^{*}$-compact.
\end{corollary}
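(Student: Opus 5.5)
The plan is to obtain Corollary~\ref{cor:Hstar-compact-surjection} as the special case $\mathbb{K}=\mathbb{X}$ of Theorem~\ref{thm:Hstar-compact-image}. Since the associated topologies $\tau_{H^{*}}$ and $\sigma_{H^{*}}$ are assumed to exist and $f$ is $H^{*}$-irresolute, all hypotheses of that theorem are in force. By Definition~\ref{def:Hstar-compact}, $H^{*}$-compactness of $(\mathbb{X},\tau)$ means that the subset $\mathbb{X}$ is $H^{*}$-compact. Theorem~\ref{thm:Hstar-compact-image} then shows that $f(\mathbb{X})$ is $H^{*}$-compact in $\mathbb{Y}$.

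The only remaining step is to use surjectivity to identify $f(\mathbb{X})=\mathbb{Y}$. This gives that $\mathbb{Y}$ is $H^{*}$-compact, i.e.\ that $(\mathbb{Y},\sigma)$ is $H^{*}$-compact.

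Alternatively, I would record a direct argument that avoids the associated topologies. Let $\{\mathbb{V}_{\mu}:\mu\in M\}$ be an $H^{*}$-open cover of $\mathbb{Y}$. By $H^{*}$-irresoluteness, $\{f^{-1}(\mathbb{V}_{\mu}):\mu\in M\}$ is an $H^{*}$-open cover of $\mathbb{X}$, so it has a finite subcover indexed by $\mu_1,\ldots,\mu_n$. Surjectivity then gives
\[
\mathbb{Y}=f(\mathbb{X})\subseteq\bigcup_{i=1}^{n}\mathbb{V}_{\mu_i},
\]
so the $\mathbb{V}_{\mu_i}$ form a finite subcover of the original cover.

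There is no genuine obstacle. The one point needing care is that surjectivity is exactly what turns ``$f(\mathbb{X})$ is $H^{*}$-compact'' into ``$\mathbb{Y}$ is $H^{*}$-compact''. This identification is used in both arguments.
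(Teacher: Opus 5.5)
Your main argument is exactly the paper's proof: apply Theorem~\ref{thm:Hstar-compact-image} with $\mathbb{K}=\mathbb{X}$, then use surjectivity to get $f(\mathbb{X})=\mathbb{Y}$. Your alternative direct cover argument is also correct, and it shows that the assumption that $\tau_{H^{*}}$ and $\sigma_{H^{*}}$ exist is not actually needed for this conclusion.
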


\begin{proof}
Apply Theorem~\ref{thm:Hstar-compact-image} with
\[
\mathbb{K}=\mathbb{X}.
\]
Since $f$ is surjective,
\[
f(\mathbb{X})=\mathbb{Y}.
\]
Hence $\mathbb{Y}$ is $H^{*}$-compact.
\end{proof}

\subsection{$H^{*}$-Homeomorphisms}
\label{subsec:Hstar-homeomorphisms}

\begin{definition}
\label{def:Hstar-homeomorphism}
A bijection
\[
f:(\mathbb{X},\tau)\longrightarrow(\mathbb{Y},\sigma)
\]
is called an \emph{$H^{*}$-homeomorphism} if both $f$ and
$f^{-1}$ are $H^{*}$-irresolute.
\end{definition}

\begin{theorem}
\label{thm:Hstar-homeomorphism-associated}
Suppose that
\[
\tau_{H^{*}}=H^{*}O(\mathbb{X})
\qquad\text{and}\qquad
\sigma_{H^{*}}=H^{*}O(\mathbb{Y})
\]
are ordinary topologies. Then a bijection
\[
f:(\mathbb{X},\tau)\longrightarrow(\mathbb{Y},\sigma)
\]
is an $H^{*}$-homeomorphism if and only if
\[
f:(\mathbb{X},\tau_{H^{*}})
\longrightarrow
(\mathbb{Y},\sigma_{H^{*}})
\]
is a homeomorphism.
\end{theorem}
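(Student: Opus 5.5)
The plan is to reduce the statement to Theorem~\ref{thm:Hstar-irresolute-continuous}, applied twice: once to $f$ and once to its inverse. Since $f$ is a bijection, the set-theoretic inverse $f^{-1}:\mathbb{Y}\to\mathbb{X}$ is a well-defined bijection. We regard it both as a map $(\mathbb{Y},\sigma)\to(\mathbb{X},\tau)$ and as a map $(\mathbb{Y},\sigma_{H^{*}})\to(\mathbb{X},\tau_{H^{*}})$. The hypotheses of Theorem~\ref{thm:Hstar-irresolute-continuous} are symmetric in the two spaces: both $\tau_{H^{*}}$ and $\sigma_{H^{*}}$ are assumed to be ordinary topologies. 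So that theorem applies equally to $f^{-1}$ once the roles of $\mathbb{X}$ and $\mathbb{Y}$ are interchanged.

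For the forward direction, assume $f$ is an $H^{*}$-homeomorphism in the sense of Definition~\ref{def:Hstar-homeomorphism}. Then $f$ is $H^{*}$-irresolute, so by Theorem~\ref{thm:Hstar-irresolute-continuous} the map $f:(\mathbb{X},\tau_{H^{*}})\to(\mathbb{Y},\sigma_{H^{*}})$ is continuous. Likewise $f^{-1}$ is $H^{*}$-irresolute, and the same theorem with the spaces swapped shows that $f^{-1}:(\mathbb{Y},\sigma_{H^{*}})\to(\mathbb{X},\tau_{H^{*}})$ is continuous. A continuous bijection with continuous inverse is a homeomorphism, which gives the conclusion.

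For the converse, assume $f:(\mathbb{X},\tau_{H^{*}})\to(\mathbb{Y},\sigma_{H^{*}})$ is a homeomorphism. Then both $f$ and $f^{-1}$ are continuous between the associated topologies. Applying the reverse implication of Theorem~\ref{thm:Hstar-irresolute-continuous} to each map shows that $f$ and $f^{-1}$ are $H^{*}$-irresolute. Hence $f$ is an $H^{*}$-homeomorphism.

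I expect no real obstacle; the argument is essentially bookkeeping. The one point to state explicitly is that Theorem~\ref{thm:Hstar-irresolute-continuous} is used for the map $f^{-1}$ from $(\mathbb{Y},\sigma)$ to $(\mathbb{X},\tau)$. This is legitimate because $H^{*}$-irresoluteness depends only on the families $H^{*}O(\mathbb{Y})$ and $H^{*}O(\mathbb{X})$, and these coincide with $\sigma_{H^{*}}$ and $\tau_{H^{*}}$. One should also note that the inverse image of a set under the map $f^{-1}$ is its direct image under $f$, so no ambiguity arises between $f^{-1}$ as an inverse map and as a preimage operator.
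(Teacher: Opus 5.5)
Your proposal is correct and follows essentially the same route as the paper: apply Theorem~\ref{thm:Hstar-irresolute-continuous} to $f$ and then to the inverse bijection $f^{-1}:(\mathbb{Y},\sigma)\to(\mathbb{X},\tau)$, using the symmetric hypotheses. Your remarks on why swapping the roles of $\mathbb{X}$ and $\mathbb{Y}$ is legitimate, and on preimages under $f^{-1}$ being direct images under $f$, are harmless clarifications that the paper leaves implicit.
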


\begin{proof}
By Theorem~\ref{thm:Hstar-irresolute-continuous},
$f$ is $H^{*}$-irresolute if and only if
\[
f:(\mathbb{X},\tau_{H^{*}})
\longrightarrow
(\mathbb{Y},\sigma_{H^{*}})
\]
is continuous.

Applying the same theorem to the inverse bijection
\[
f^{-1}:(\mathbb{Y},\sigma)
\longrightarrow
(\mathbb{X},\tau),
\]
we obtain that $f^{-1}$ is $H^{*}$-irresolute if and only if
\[
f^{-1}:(\mathbb{Y},\sigma_{H^{*}})
\longrightarrow
(\mathbb{X},\tau_{H^{*}})
\]
is continuous.

Thus both $f$ and $f^{-1}$ are $H^{*}$-irresolute if and only if
$f$ is a homeomorphism between the associated spaces.
\end{proof}

\begin{corollary}
\label{cor:Hstar-homeomorphic-invariants}
Suppose that the associated topologies exist. Then every
topological invariant of the associated spaces is preserved under
an $H^{*}$-homeomorphism. In particular, the $H^{*}$-compactness
and $H^{*}$-separation properties identified in the preceding
sections are preserved under $H^{*}$-homeomorphisms.
\end{corollary}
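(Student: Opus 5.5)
The plan is to reduce everything to Theorem~\ref{thm:Hstar-homeomorphism-associated}. Let $f:(\mathbb{X},\tau)\to(\mathbb{Y},\sigma)$ be an $H^{*}$-homeomorphism. By that theorem,
\[
f:(\mathbb{X},\tau_{H^{*}})\longrightarrow(\mathbb{Y},\sigma_{H^{*}})
\]
is a homeomorphism in the ordinary sense. A topological invariant is, by definition, a property $P$ of topological spaces such that whenever two spaces are homeomorphic, one has $P$ if and only if the other has $P$. So $(\mathbb{X},\tau_{H^{*}})$ has $P$ exactly when $(\mathbb{Y},\sigma_{H^{*}})$ does, and this proves the first assertion.

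For the ``in particular'' clause, I will list each $H^{*}$-property from the earlier sections and pair it with the correspondence result showing it is equivalent to a classical property of the associated space:
\begin{itemize}
\item $H^{*}$-$T_{i}$ for $i=0,1,2$, via Corollary~\ref{cor:Hstar-Ti-associated};
\item $H^{*}$-$D_{1}$ and $H^{*}$-$D_{2}$, via Theorem~\ref{thm:Hstar-D-associated};
\item $H^{*}$-regular, $H^{*}$-$T_{3}$, $H^{*}$-normal and $H^{*}$-$T_{4}$, via Theorem~\ref{thm:Hstar-regular-associated}, Corollary~\ref{cor:Hstar-T3-associated}, Theorem~\ref{thm:Hstar-normal-associated} and Corollary~\ref{cor:Hstar-T4-associated};
\item $H^{*}$-completely regular and $H^{*}$-Tychonoff, via Theorem~\ref{thm:Hstar-completely-regular-associated} and Corollary~\ref{cor:Hstar-Tychonoff-associated};
\item $H^{*}$-compactness of the whole space, via Corollary~\ref{cor:Hstar-space-compact-associated}.
\end{itemize}
Each classical counterpart ($T_{i}$, $D_{i}$, regularity, normality, complete regularity, compactness) is a standard topological invariant. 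For each property the argument is then a chain of equivalences:
\[
(\mathbb{X},\tau)\text{ has the }H^{*}\text{-property}\iff(\mathbb{X},\tau_{H^{*}})\text{ has the classical property}
\iff(\mathbb{Y},\sigma_{H^{*}})\text{ has it}\iff(\mathbb{Y},\sigma)\text{ has the }H^{*}\text{-property}.
\]
The first and last equivalences come from the correspondence results above; the middle one is invariance under the homeomorphism.

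The $D_{i}$ axioms are the least standard in this list, so I will check directly that they are invariant, which keeps the argument self-contained. A homeomorphism $g$ carries $D$-sets to $D$-sets: from $g(\mathbb{U}\setminus\mathbb{V})=g(\mathbb{U})\setminus g(\mathbb{V})$, and since bijectivity gives $g(\mathbb{U})\neq\mathbb{Y}$ whenever $\mathbb{U}\neq\mathbb{X}$. Alternatively, Theorems~\ref{thm:Hstar-D1-injective} and~\ref{thm:Hstar-D2-injective}, applied to $f$ and to $f^{-1}$ (both $H^{*}$-irresolute bijections), give the $D_{i}$ preservation in both directions. For compactness of subsets, one can instead apply Theorem~\ref{thm:Hstar-compact-image} to $f$ and $f^{-1}$.

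I expect the only genuine difficulty to be that ``every topological invariant'' is informal. I will handle this by stating explicitly the definition of topological invariant used above, and by limiting the ``in particular'' claim to the properties listed, each of which has an explicit correspondence theorem.
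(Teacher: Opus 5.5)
Your proposal is correct and follows essentially the same route as the paper: use Theorem~\ref{thm:Hstar-homeomorphism-associated} to get an ordinary homeomorphism of the associated spaces, then transfer each $H^{*}$-property through its associated-topology characterization from Sections~\ref{sec:separation}--\ref{sec:compactness}. Your explicit chain of equivalences, the direct check that the $D_i$ axioms are invariant, and the alternative argument via Theorems~\ref{thm:Hstar-D1-injective}, \ref{thm:Hstar-D2-injective} and~\ref{thm:Hstar-compact-image} applied to both $f$ and $f^{-1}$ simply spell out details that the paper leaves implicit.
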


\begin{proof}
By Theorem~\ref{thm:Hstar-homeomorphism-associated}, an
$H^{*}$-homeomorphism is precisely a homeomorphism between the
associated spaces. Hence every topological invariant of those
spaces is preserved.

The assertions concerning $H^{*}$-compactness and the
$H^{*}$-separation properties follow from their corresponding
associated-topology characterizations established in
Sections~\ref{sec:separation}--\ref{sec:compactness}.
\end{proof}

\begin{remark}
\label{rem:mapping-associated-view}
The associated-topology approach therefore provides a unified
interpretation of the mapping theory. Whenever
\[
\tau_{H^{*}}=H^{*}O(\mathbb{X})
\quad\text{and}\quad
\sigma_{H^{*}}=H^{*}O(\mathbb{Y})
\]
are ordinary topologies, $H^{*}$-irresolute mappings are precisely
continuous mappings between the associated spaces, while
$H^{*}$-homeomorphisms are precisely homeomorphisms between them.
Consequently, classical preservation results may be applied whenever
the corresponding $H^{*}$-property has already been identified with
a topological property of the associated space.
\end{remark}

\section{Relation with Generalized Topological Structures}
\label{sec:comparison}

The purpose of this section is to place the $H^{*}$-generated
structure developed in the preceding sections within the broader
framework of generalized topology. This distinction is important
because a family of generalized open sets need not satisfy the
finite-intersection axiom of an ordinary topology, although it may
still support meaningful notions of interior, closure, separation,
compactness, and mappings.

Recall that a generalized topology on a nonempty set $\mathbb{X}$
is a family
\[
\Gamma\subseteq\mathcal{P}(\mathbb{X})
\]
such that
\[
\varnothing\in\Gamma
\]
and $\Gamma$ is closed under arbitrary unions. It is called
\emph{strong} if
\[
\mathbb{X}\in\Gamma.
\]

In the present setting, the natural family is
\[
H^{*}O(\mathbb{X}).
\]
As established in
Theorem~\ref{thm:Hstar-strong-generalized-topology}, if
$H^{*}C(\mathbb{X})$ is closed under arbitrary intersections, then
\[
\Gamma_{H^{*}}=H^{*}O(\mathbb{X})
\]
is a strong generalized topology on $\mathbb{X}$.

\subsection{From $H^{*}$-Open Sets to a Generalized Topology}
\label{subsec:Hstar-GT-comparison}

The basic mechanism may be summarized as follows.

\begin{proposition}
\label{prop:Hstar-generalized-mechanism}
Suppose that $H^{*}C(\mathbb{X})$ is closed under arbitrary
intersections. Then
\[
\Gamma_{H^{*}}=H^{*}O(\mathbb{X})
\]
is a strong generalized topology on $\mathbb{X}$, and the
$\Gamma_{H^{*}}$-closed sets are precisely the members of
$H^{*}C(\mathbb{X})$.
\end{proposition}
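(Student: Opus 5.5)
The statement is a recombination of two results already proved, and I would simply assemble them. The two assertions are independent, so I would treat them in turn.

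\emph{First assertion.} Under the standing hypothesis that $H^{*}C(\mathbb{X})$ is closed under arbitrary intersections, Theorem~\ref{thm:Hstar-strong-generalized-topology} applies verbatim.
\begin{enumerate}[label=(\alph*)]
\item Proposition~\ref{prop:empty-X-Hstar} gives $\varnothing,\mathbb{X}\in H^{*}O(\mathbb{X})$.
\item For any family $\{\mathbb{U}_{\lambda}\}_{\lambda\in\Lambda}\subseteq H^{*}O(\mathbb{X})$, De Morgan's law gives
\[
\mathbb{X}\setminus\bigcup_{\lambda\in\Lambda}\mathbb{U}_{\lambda}
=\bigcap_{\lambda\in\Lambda}\bigl(\mathbb{X}\setminus\mathbb{U}_{\lambda}\bigr).
\]
The right-hand side is an intersection of $H^{*}$-closed sets, hence $H^{*}$-closed by hypothesis.
\end{enumerate}
So $\Gamma_{H^{*}}$ contains $\varnothing$, contains $\mathbb{X}$ (which is exactly strongness in the sense of Definition~\ref{def:strong-generalized-topology}), and is closed under arbitrary unions. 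It is therefore a strong generalized topology in the sense of Definition~\ref{def:generalized-topology}.

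\emph{Second assertion.} This is Proposition~\ref{prop:Gamma-Hstar-closed}, whose proof is a chain of definitional equivalences. By Definition~\ref{def:Gamma-closure}, $\mathbb{F}$ is $\Gamma_{H^{*}}$-closed if and only if $\mathbb{X}\setminus\mathbb{F}\in\Gamma_{H^{*}}$. Since $\Gamma_{H^{*}}=H^{*}O(\mathbb{X})$, this holds if and only if $\mathbb{X}\setminus\mathbb{F}$ is $H^{*}$-open. By Definition~\ref{def:Hstar-closed}, that is equivalent to $\mathbb{X}\setminus(\mathbb{X}\setminus\mathbb{F})=\mathbb{F}$ being $H^{*}$-closed.

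The only point needing care is the last step. It uses the fact that $H^{*}$-open is defined as ``complement is $H^{*}$-closed,'' together with double complementation, so that the $\Gamma_{H^{*}}$-closed family coincides exactly with $H^{*}C(\mathbb{X})$. I do not expect any genuine obstacle; the proof is a citation of Theorem~\ref{thm:Hstar-strong-generalized-topology} and Proposition~\ref{prop:Gamma-Hstar-closed}.
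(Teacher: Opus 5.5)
Your proposal is correct and follows the paper's proof, which likewise cites Theorem~\ref{thm:Hstar-strong-generalized-topology} for the first assertion and Proposition~\ref{prop:Gamma-Hstar-closed} for the second. The extra detail you supply (De Morgan's law with the arbitrary-intersection hypothesis, and double complementation) matches the proofs of those two cited results.
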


\begin{proof}
The first assertion is exactly
Theorem~\ref{thm:Hstar-strong-generalized-topology}. By
Proposition~\ref{prop:Gamma-Hstar-closed}, the complements of the
members of $\Gamma_{H^{*}}$ are precisely the members of
$H^{*}C(\mathbb{X})$. Hence these are exactly the
$\Gamma_{H^{*}}$-closed sets.
\end{proof}

Thus two distinct structural levels must be distinguished:
\[
\boxed{
\Gamma_{H^{*}}=H^{*}O(\mathbb{X})
\quad\text{as a strong generalized topology}
}
\]
and, under the additional finite-intersection condition,
\[
\boxed{
\tau_{H^{*}}=H^{*}O(\mathbb{X})
\quad\text{as an ordinary topology}.
}
\]

More precisely, under the first condition,
\[
H^{*}C(\mathbb{X})
\text{ is closed under arbitrary intersections}
\]
implies
\[
\Gamma_{H^{*}}
=
H^{*}O(\mathbb{X})
\text{ is a strong generalized topology}.
\]

If, in addition,
\[
H^{*}C(\mathbb{X})
\text{ is closed under finite unions},
\]
then, by
Theorem~\ref{thm:Hstar-topology-characterization},
\[
\tau_{H^{*}}
=
H^{*}O(\mathbb{X})
\]
is an ordinary topology.

Equivalently, under the standing arbitrary-intersection hypothesis
on $H^{*}C(\mathbb{X})$, the passage from
$\Gamma_{H^{*}}$ to $\tau_{H^{*}}$ occurs precisely when
$H^{*}O(\mathbb{X})$ is closed under finite intersections.

This distinction is essential: finite intersections of
$H^{*}$-open sets are not automatic.

\subsection{The Operator Viewpoint}
\label{subsec:Hstar-operator-viewpoint}

The generalized-topological formulation also gives a natural
interpretation of the $H^{*}$-closure and $H^{*}$-interior
operators. Under the standing assumption that
$H^{*}C(\mathbb{X})$ is closed under arbitrary intersections,
define
\[
c_{H^{*}}:
\mathcal{P}(\mathbb{X})
\longrightarrow
\mathcal{P}(\mathbb{X})
\]
by
\[
c_{H^{*}}(\mathbb{A})
=
H^{*}\operatorname{Cl}(\mathbb{A}),
\]
and
\[
i_{H^{*}}:
\mathcal{P}(\mathbb{X})
\longrightarrow
\mathcal{P}(\mathbb{X})
\]
by
\[
i_{H^{*}}(\mathbb{A})
=
H^{*}\operatorname{Int}(\mathbb{A}).
\]

By the results of Section~\ref{sec:operators}, these operators are,
respectively, extensive and contractive:
\[
\mathbb{A}
\subseteq
c_{H^{*}}(\mathbb{A}),
\qquad
i_{H^{*}}(\mathbb{A})
\subseteq
\mathbb{A}.
\]
They are monotone:
\[
\mathbb{A}\subseteq\mathbb{B}
\Longrightarrow
c_{H^{*}}(\mathbb{A})
\subseteq
c_{H^{*}}(\mathbb{B}),
\]
and
\[
\mathbb{A}\subseteq\mathbb{B}
\Longrightarrow
i_{H^{*}}(\mathbb{A})
\subseteq
i_{H^{*}}(\mathbb{B}),
\]
and they are idempotent:
\[
c_{H^{*}}
\bigl(c_{H^{*}}(\mathbb{A})\bigr)
=
c_{H^{*}}(\mathbb{A}),
\]
\[
i_{H^{*}}
\bigl(i_{H^{*}}(\mathbb{A})\bigr)
=
i_{H^{*}}(\mathbb{A}).
\]

Moreover, the two operators are related by complement duality:
\[
c_{H^{*}}(\mathbb{A})
=
\mathbb{X}\setminus
i_{H^{*}}(\mathbb{X}\setminus\mathbb{A}),
\]
and
\[
i_{H^{*}}(\mathbb{A})
=
\mathbb{X}\setminus
c_{H^{*}}(\mathbb{X}\setminus\mathbb{A}).
\]

Hence the operators
$c_{H^{*}}$ and $i_{H^{*}}$ agree with the closure and interior
operators determined by the strong generalized topology
$\Gamma_{H^{*}}$.

It is important, however, that at this level one should not
automatically impose the finite-set identities characteristic of
ordinary topological closure and interior. Those identities arise
under the stronger conditions ensuring that
$H^{*}O(\mathbb{X})$ is an ordinary topology.

\subsection{Passage to the Associated Topology}
\label{subsec:passage-associated}

Suppose now that the equivalent conditions of
Theorem~\ref{thm:Hstar-topology-characterization} are satisfied.
Then
\[
\tau_{H^{*}}
=
H^{*}O(\mathbb{X})
\]
is an ordinary topology on $\mathbb{X}$.

By Corollary~\ref{cor:Hstar-associated-closure}, for every
$\mathbb{A}\subseteq\mathbb{X}$,
\[
H^{*}\operatorname{Cl}(\mathbb{A})
=
\operatorname{Cl}_{\tau_{H^{*}}}(\mathbb{A}),
\]
and
\[
H^{*}\operatorname{Int}(\mathbb{A})
=
\operatorname{Int}_{\tau_{H^{*}}}(\mathbb{A}).
\]

Thus the following identifications hold:
\[
\begin{array}{ccc}
H^{*}\text{-open sets}
&
\longleftrightarrow
&
\tau_{H^{*}}\text{-open sets},
\\[2mm]
H^{*}\text{-closed sets}
&
\longleftrightarrow
&
\tau_{H^{*}}\text{-closed sets},
\\[2mm]
H^{*}\operatorname{Cl}
&
\longleftrightarrow
&
\operatorname{Cl}_{\tau_{H^{*}}},
\\[2mm]
H^{*}\operatorname{Int}
&
\longleftrightarrow
&
\operatorname{Int}_{\tau_{H^{*}}}.
\end{array}
\]

This correspondence is the structural reason why several notions
defined through the $H^{*}$-open and $H^{*}$-closed sets admit
ordinary topological interpretations in the associated space.

\subsection{Consequences for the $H^{*}$-Structure}
\label{subsec:comparison-separation}

The results of the preceding sections can now be viewed as
consequences of the associated-topology correspondence.

For $i\in\{0,1,2\}$,
\[
(\mathbb{X},\tau)
\text{ is }H^{*}\text{-}T_i
\quad\Longleftrightarrow\quad
(\mathbb{X},\tau_{H^{*}})
\text{ is }T_i.
\]

Similarly, for $i\in\{1,2\}$,
\[
(\mathbb{X},\tau)
\text{ is }H^{*}\text{-}D_i
\quad\Longleftrightarrow\quad
(\mathbb{X},\tau_{H^{*}})
\text{ is }D_i.
\]

The same principle identifies $H^{*}$-regularity,
$H^{*}$-normality, and $H^{*}$-complete regularity with their
classical counterparts in the associated topology. Likewise,
$H^{*}$-compactness becomes ordinary compactness in
$(\mathbb{X},\tau_{H^{*}})$, and an $H^{*}$-irresolute mapping
between two spaces whose associated topologies exist becomes an
ordinary continuous mapping between those associated spaces.

These equivalences should be understood as structural
identifications. They do not assert that the corresponding
properties hold automatically; rather, they show that whenever an
$H^{*}$-property is formulated through the associated
$H^{*}$-open/closed structure, its validity can be examined in the
ordinary topological space $(\mathbb{X},\tau_{H^{*}})$.

\subsection{Comparison with the Original Topology}
\label{subsec:structural-comparison}

The original topology $\tau$ and the associated topology
$\tau_{H^{*}}$ must be kept conceptually distinct. Even when
$\tau_{H^{*}}$ exists, there is no general reason for the two
topologies to coincide.

Under the hypothesis of
Proposition~\ref{prop:tau-subset-Hstar}, one has
\[
\tau\subseteq\tau_{H^{*}},
\]
but this inclusion may be strict.

Indeed, Examples~\ref{ex:finite-discrete-Hstar} and
\ref{ex:infinite-discrete-Hstar} show that it is possible to have
\[
\tau_{H^{*}}
=
\mathcal{P}(\mathbb{X}),
\]
while the original space $(\mathbb{X},\tau)$ fails even to satisfy
the $T_{0}$ separation axiom.

Thus the passage
\[
(\mathbb{X},\tau)
\longmapsto
(\mathbb{X},\tau_{H^{*}})
\]
may substantially alter the topological structure of the underlying
set.

When
\[
\tau\subsetneq\tau_{H^{*}},
\]
the associated topology contains strictly more open sets than the
original topology. This may permit point-separation phenomena in
$(\mathbb{X},\tau_{H^{*}})$ that are absent from
$(\mathbb{X},\tau)$.

However, the presence of a finer associated topology should not be
interpreted as a general improvement of all topological properties.
As observed in Section~\ref{sec:compactness}, for example, passing
to a finer topology may destroy compactness.

\subsection{Scope of the Generalized-Topology Approach}
\label{subsec:scope-GT}

The generalized-topological viewpoint becomes particularly important
when $H^{*}O(\mathbb{X})$ is closed under arbitrary unions but is
not closed under finite intersections. In this situation,
\[
\Gamma_{H^{*}}
=
H^{*}O(\mathbb{X})
\]
is a meaningful strong generalized topology, whereas
\[
\tau_{H^{*}}
=
H^{*}O(\mathbb{X})
\]
cannot be regarded as an ordinary topology.

Thus one obtains the structural distinction
\[
\boxed{
\Gamma_{H^{*}}
\text{ may exist even when }
\tau_{H^{*}}
\text{ does not exist as an ordinary topology}.
}
\]

Consequently, results formulated solely in terms of the strong
generalized topology $\Gamma_{H^{*}}$ and its generalized
closure/interior operators can be investigated without assuming the
finite-intersection axiom. By contrast, results that identify the
$H^{*}$-structure with an ordinary topological structure require the
additional hypotheses ensuring the existence of
$\tau_{H^{*}}$.

This distinction prevents properties of ordinary topologies, or of
other operator-generated generalized topologies, from being
transferred to the present setting without verification.

\begin{remark}
\label{rem:no-automatic-strong-separation}
The existence of the strong generalized topology
$\Gamma_{H^{*}}$ does not by itself imply Hausdorffness,
regularity, complete regularity, normality, perfect normality, or
any other stronger separation property. Similarly, even when
$\tau_{H^{*}}$ exists as an ordinary topology, such properties do
not follow merely from its construction.

Accordingly, stronger separation conclusions established for other
generalized-topological or operator-generated structures require
their own hypotheses and cannot be transferred automatically to the
$H^{*}$-generated structure.
\end{remark}

\section{Conclusion and Future Directions}
\label{sec:conclusion}

In this paper, we investigated the structure induced by
$H^{*}$-open sets from both generalized-topological and ordinary
topological viewpoints. We showed that, under the arbitrary-
intersection closure of $H^{*}C(\mathbb{X})$,
\[
\Gamma_{H^{*}}=H^{*}O(\mathbb{X})
\]
forms a strong generalized topology. Moreover, under this
hypothesis, $\Gamma_{H^{*}}$ becomes an ordinary topology precisely
when $H^{*}O(\mathbb{X})$ is closed under finite intersections,
equivalently, when $H^{*}C(\mathbb{X})$ is closed under finite
unions. In this case, we write
\[
\tau_{H^{*}}=H^{*}O(\mathbb{X}).
\]

When the associated topology exists, the $H^{*}$-closure and
$H^{*}$-interior coincide with the ordinary closure and interior
operators of $(\mathbb{X},\tau_{H^{*}})$. This correspondence
provides a unified interpretation of the $H^{*}$-separation
properties, $H^{*}$-difference separation, regularity, normality,
complete regularity, compactness, and $H^{*}$-irresolute mappings.
The examples further demonstrate that $\tau_{H^{*}}$ may differ
substantially from the original topology $\tau$; in particular, the
associated topology may be discrete even when the original space is
not $T_{0}$.

Further work may focus on characterizing spaces for which
$H^{*}C(\mathbb{X})$ is closed under arbitrary intersections or
finite unions, and on studying properties of
$(\mathbb{X},\Gamma_{H^{*}})$ without assuming the existence of an
ordinary associated topology. Other natural directions include
connectedness, convergence, product constructions, and additional
mapping properties in the $H^{*}$-generated setting.


\bibliographystyle{amsplain}
\bibliography{references}

@article{njȧstad1965some,
  title={On some classes of nearly open sets},
  author={Njastad, Olav},
  journal={Pacific journal of mathematics},
  volume={15},
  number={3},
  pages={961--970},
  year={1965},
  publisher={Mathematical Sciences Publishers}
}

@article{singal1969almost,
  title={On almost regular spaces},
  author={Singal, MK and Arya, S Prabha},
  journal={Glasnik Mat},
  volume={4},
  number={24},
  pages={89--99},
  year={1969}
}

@article{sharma2014xi,
  title={$\xi$-Regular Spaces},
  author={Sharma, MC and Sharma, Poonam and Sharma, Shobha and Singh, Mandeep},
  journal={J. of Applied Sci. and Tech},
  volume={4},
  number={1},
  pages={1--4},
  year={2014}
}

@article{kumar2017xi,
  title={$\xi$-Normal and $\xi$ Regular Spaces in Topological Spaces},
  author={Kumar, Hamant},
  journal={Internat. J. of Sci. Engg. and Res},
  volume={5},
  number={9},
  pages={6--10},
  year={2017}
}

@article{kumar2018softly,
  title={Softly regular spaces in topological spaces},
  author={Kumar, Hamant and Sharma, MC},
  journal={Journal of Emerging Technologies and Innovative Research},
  volume={5},
  number={11},
  pages={183--190},
  year={2018}
}

@article{stone1937applications,
  title={Applications of the theory of Boolean rings to general topology},
  author={Stone, Marshall Harvey},
  journal={Transactions of the American Mathematical Society},
  volume={41},
  number={3},
  pages={375--481},
  year={1937},
  publisher={JSTOR}
}

@article{levine1963semi,
  title={Semi-open sets and semi-continuity in topological spaces},
  author={Levine, Norman},
  journal={The American mathematical monthly},
  volume={70},
  number={1},
  pages={36--41},
  year={1963},
  publisher={Taylor \& Francis}
}

@article{levine1970gen,
  title={Generalized closed sets in topology},
  author={Levine. N},
  journal={Rendiconti del Circolo Matematico di Palermo},
  volume={19},
  pages={89--96},
  year={1970},
  publisher={Springer}
}

@article{arya1974strongly,
  title={On strongly continuous functions},
  author={Arya, SP and Gupta, R},
  journal={Kyungpook Math. J},
  volume={14},
  pages={131--143},
  year={1974}
}

@article{jankovic1985note,
  title={A note on mappings of extremally disconnected spaces},
  author={Jankovi{\'c}, D},
  journal={Acta Mathematica Hungarica},
  volume={46},
  number={1-2},
  pages={83--92},
  year={1985},
  publisher={Akad{\'e}miai Kiad{\'o}, co-published with Springer Science+ Business Media BV~…}
}

@article{hatir1996decomposition,
  title={A decomposition of continuity},
  author={Hat{\i}r, E and Noiri, T and Y{\"u}ksel, S},
   journal={Journal Acta Mathematica Hungarica},
    volume={70},
    pages={145--150},
    year={1996},
  publisher={AKADEMIAI KIADO}
}

@article{rodrigo2012international,
  title={On h-closed set in topologicl spaces},
  author={Rodrigo, J Antony Rex and Jansi, A Hana Selvi and Theodore, Jessie},
 journal={International Journal of Mathematical Archive},
volume={3},
pages={1057--1062},
  year={2012}
}

@article{quasidontchev,
  title={Quasi-normal spaces and $\pi$ g-closed sets},
  author={Dontchev, J and Noiri, T},
  journal={Acta Mathematica Hungarica},
  volume={89},
  number={3},
  pages={211--219},
  year={2000},
  publisher={Akad{\'e}miai Kiad{\'o}, co-published with Springer Science+ Business Media BV~…}
}

@article{decompositiondontchev1996,
  title={On the various decompositions of continuous and some weakly continuous functions},
  author={Dontchev, J and Przemski, M},
  journal={Acta mathematica hungarica},
  volume={71},
  number={1-2},
  pages={109--120},
  year={1996},
  publisher={Akad{\'e}miai Kiad{\'o}, co-published with Springer Science+ Business Media BV~…}
}

@article{Noiri1994,
  author    = {Noiri, T.},
  title     = {Semi-Normal Spaces and Some Functions},
  journal   = {Acta Mathematica Hungarica},
  volume    = {65},
  number    = {3},
  pages     = {305--311},
  year      = {1994}
}

@article{Noiri1998,
  author    = {T. Noiri},
  title     = {Generalized Preclosed Functions},
  journal   = {Mem. Fac. Sci. Kochi Univ. (Math.)},
  volume    = {19},
  pages     = {13--20},
  year      = {1998}
}

@article{sharma2015h,
  title={H*-NORMAL SPACES},
  author={Sharma, Nidhi and Sharma, Poonam and Tomar, Neeraj Kumar},
  journal={Acta Ciencia Indica},
  volume={2},
  number={XLIM},
  pages={105--108},
  year={2015},
  publisher={Acta Ciencia Indica}
}

@article{Sundaram2000,
  title={On a-closed sets in topology},
  author={Sundaram Pand Sheik John M},
  journal={ Acta Ciencia Indica},
  volume={4},
  pages={389--392},
  year={2000}
}

@article{tomarsome2024,
  title={Some Separation Axioms in Topological Spaces},
  author={Tomar Neeraj, M.C. Sharma, A. Ujlayan and Zulfeqarr},
  journal={ The Mathematics Education},
  volume={LVIII No. 3},
  pages={48--60},
  year={Sep. 2024},
note       = {\url{https://doi.org/10.5281/zenodo.14823030}},
}

@article{Tomar2026,
  title   = {$\pi H^{*}$-Closed Sets in Topological Spaces},
  author  = {Tomar, Neeraj K. and Rani, Saroj and Subadra, Nemani},
  journal = {Bulletin of the Society of Parana Mathematics},
  volume  = {44},
  number  = {2},
  pages   = {1--19},
  year    = {2026},
  note       = {\url{https://doi. 10.5269/bspm.80446}},
}

@article{Tong1982,
  title   = {A Separation Axiom Between $T_0$ and $T_1$},
  author  = {Tong, J.},
  journal = {Annales de la Soci{\'e}t{\'e} Scientifique de Bruxelles},
  volume  = {96},
  pages   = {85--90},
  year    = {1982}
}

\end{document}